\newcommand{\F}{\mathbb{F}}
\newcommand{\N}{\mathbb{N}}
\newcommand{\Q}{\mathbb{Q}}
\newcommand{\Aut}{\operatorname{Aut}}
\newcommand{\Irr}{\operatorname{Irr}}
\newcommand{\PSL}{\operatorname{PSL}}
\newcommand{\Sz}{\operatorname{Sz}}
\newcommand{\Gal}{\operatorname{Gal}}
\newtheorem{thm}{Theorem}[section]
\newtheorem{lem}[thm]{Lemma}
\newtheorem{cor}[thm]{Corollary}
\newtheorem*{thmA}{Theorem A}
\newtheorem*{conA'}{Conjecture A'}
\theoremstyle{definition}
\numberwithin{equation}{section}
\begin{document}

\title[Groups with small multiplicities of fields of values]{Groups with small multiplicities of fields of values of irreducible characters}

\author{Juan Mart\'inez}
\address{Departament de Matem\`atiques, Universitat de Val\`encia, 46100
  Burjassot, Val\`encia, Spain}
\email{Juan.Martinez-Madrid@uv.es}

\thanks{Research supported by Generalitat Valenciana CIAICO/2021/163 and CIACIF/2021/228.}

\keywords{Irreducible character, Field of values, Galois extension}

\subjclass[2020]{Primary 20C15}

\date{\today}

\begin{abstract}
In this work, we  classify all finite groups such that for every field extension $F$ of $\Q$, $F$ is the field of values of at most $3$ irreducible characters.
\end{abstract}

\maketitle

\section{Introduction}\label{Section1}

 Let $G$ be a finite group, and let $\chi$ be a character of $G$. We define the field of values of $\chi$ as

\[\Q(\chi)=\Q(\chi(g)|g \in G).\]   We also define \[f(G)=\max_{F/\mathbb{Q}}|\{\chi \in \Irr(G)|\mathbb{Q}(\chi)=F\}|.\] A.Moretó \cite{Alex} proved that the order of a group is bounded in terms of $f(G)$. This is, there exists $b : \N \rightarrow \N$ such that $|G|\leq b(f(G))$, for every finite group $G$. In that work,  it was observed  that $f(G)=1$ if and only if $G=1$. The referee of \cite{Alex} asked for the classification of finite groups $G$ with $f(G)=2$ or $3$. Our goal in this paper is to obtain this classification.

\begin{thmA}
Let $G$ be a finite group. Then

\begin{itemize}
\item[(i)] If $f(G)=2$, then $G \in \{\mathsf{C}_{2},\mathsf{C}_{3},\mathsf{C}_{4},\mathsf{D}_{10},\mathsf{A}_{4},\mathsf{F}_{21}\}$.

\item[(ii)]  If $f(G)=3$, then $G \in \{\mathsf{S}_{3},\mathsf{D}_{14},\mathsf{D}_{18},\mathsf{F}_{20},\mathsf{F}_{52}, \mathsf{A}_{5},\PSL(2,8),\Sz(8)\}$.
\end{itemize}
where $\mathsf{F}_{n}$ and $\mathsf{D}_{n}$ are the Frobenius group and the dihedral group of order $n$, respectively.  As a consequence, the best possible values for $b(2)$ and $b(3)$ are $21$ and $29.120$, respectively.
\end{thmA}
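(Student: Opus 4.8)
The plan is to reduce the classification to a small, explicitly bounded search space and then finish by direct computation. First I would record the basic monotonicity properties of $f$: if $N \nor G$ then $\Irr(G/N)$ embeds into $\Irr(G)$ with fields of values preserved, so $f(G/N) \le f(G)$; similarly, restricting to a subgroup does not in general behave well, but for $f$ the quotient bound is the one we need. Thus any group with $f(G) \le 3$ has every proper quotient again satisfying $f \le 3$, and in particular every such group has at most $3$ rational irreducible characters (take $F = \Q$), which already forces $G$ to have very few rational classes. Since the trivial character is always rational, $f(G) \ge 2$ unless $G = 1$, recovering the observation from \cite{Alex}; and the number of rational-valued irreducible characters equals the number of rational (i.e.\ $\Aut$-stable) conjugacy classes, so $G$ has at most $3$ such classes.

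Next I would invoke the main theorem of \cite{Alex}: $|G| \le b(f(G))$ for an explicit function $b$. The point is not the exact value of $b(3)$ coming out of \cite{Alex} — which is surely astronomically larger than $29120$ — but that \emph{some} effective bound exists, reducing the problem in principle to a finite check. To make the check feasible I would sharpen the bound by hand: using that $G$ has at most $3$ rational classes, classical results (e.g.\ in the spirit of Gow, and the structure of groups with few rational classes) constrain $|G|$, the prime divisors of $|G|$, and the structure of $G$ (solvable case versus the possibility of a nonabelian composition factor). A group with at most two nontrivial rational classes is extremely restricted: one shows the Sylow subgroups are small, $G/\mathbf{O}_{\infty}(G)$ or the relevant simple sections must themselves have $f \le 3$, and one is quickly reduced to $|G|$ bounded by a few thousand, together with a short list of candidate nonabelian simple groups.

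Then I would split into cases. For the solvable groups under the derived size bound I would either run a \textsf{GAP}-style enumeration over all groups of the admissible orders, computing $f(G)$ directly from the character table and the Galois action, or argue structurally: Frobenius groups and small metacyclic/dihedral groups are the only survivors, yielding $\mathsf{C}_2,\mathsf{C}_3,\mathsf{C}_4,\mathsf{D}_{10},\mathsf{A}_4,\mathsf{F}_{21}$ for $f=2$ and $\mathsf{S}_3,\mathsf{D}_{14},\mathsf{D}_{18},\mathsf{F}_{20},\mathsf{F}_{52}$ for $f=3$. For the nonsolvable case, the presence of a nonabelian composition factor $S$ forces $f(S)$ (and $f$ of the relevant almost-simple extension) to be $\le 3$; by the bound this leaves only a handful of small simple groups ($\mathsf{A}_5$, $\PSL(2,7)$, $\PSL(2,8)$, $\PSL(2,11)$, $\mathsf{A}_6$, $\Sz(8)$, \ldots), each of which I would check from its known character table together with the Galois/automorphism action, finding exactly $\mathsf{A}_5,\PSL(2,8),\Sz(8)$ as the groups with $f=3$ and no solvable example with $f = 2$ among the nonsolvable-adjacent cases. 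Finally, $29120 = |\Sz(8)|$ and $21 = |\mathsf{F}_{21}|$, and since these groups actually occur in the lists, they are the best possible values of $b(2)$ and $b(3)$.

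The main obstacle I anticipate is the middle step: converting the ineffective-looking bound from \cite{Alex} into a bound small enough to make the final enumeration genuinely finite and checkable, especially ruling out larger nonabelian simple groups and their extensions without appealing to a computer search one cannot actually run. The cleanest route is probably to prove directly that a group with at most three rational conjugacy classes has order divisible by at most two primes and bounded Sylow subgroups — this is where the real work lies, and it is the part that deserves a careful, self-contained argument rather than a black-box citation.
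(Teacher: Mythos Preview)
Your proposal has a genuine gap at exactly the point you flag as the ``main obstacle'': you never actually close the distance between the ineffective bound $|G|\le b(f(G))$ from \cite{Alex} and a bound small enough to enumerate. The paper remarks that the argument of \cite{Alex} yields at best $|G|\le 2^{2^{256}}$ for $f(G)=2$, so ``run \textsf{GAP} over all admissible orders'' is not a proof. Your proposed sharpening --- show that a group with at most three rational classes has order divisible by at most two primes and bounded Sylow subgroups --- is false as stated: $\Sz(8)$ has $f=3$ and $|\Sz(8)|=2^{6}\cdot 5\cdot 7\cdot 13$ has four prime divisors, and $\PSL(2,8)$, $\mathsf{A}_5$ each have three. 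So that route cannot work.

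The paper's approach is quite different and does not pass through an order bound at all. The key lemma you are missing is Lemma~3.1 of \cite{Alex}: $|\Q(\chi):\Q|\le f(G)$ for every $\chi\in\Irr(G)$. Hence for $f(G)\le 3$ every field of values is $\Q$, quadratic, or cubic; combined with elementary counts of quadratic and cubic subfields of $\Q_n$ this bounds, in each concrete structural situation, the \emph{number of possible fields of values} and hence $k(G)$. One then finishes with the Vera-L\'opez classification of groups with few conjugacy classes. For the nonsolvable case the paper does not reduce via ``$f(S)\le f(G)$ for a composition factor $S$'' (which is not available --- $f$ is only monotone under quotients), but instead uses the Navarro--Tiep and Rossi classifications of nonsolvable groups with two or three rational irreducible characters to pin down $G/O_{2'}(G)$, and then rules out nontrivial $O_{2'}(G)$ by the same field-counting method. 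The solvable case is handled structurally: first classify the metabelian groups with $f\le 3$, then show $G''=1$ by bounding $k(G)$ in each of the finitely many possibilities for $G/G''$.
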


We will study the solvable case and the non-solvable case separately. In the non-solvable case, using a theorem of Navarro and Tiep \cite{Navarro-Tiep}, we will prove that the condition $f(G)\leq 3$ implies that $G$ possesses  $3$ rational characters. Then, we will use the main results of   \cite{Rossi} to restrict the structure of non-solvable groups with $f(G)\leq 3$. We will divide the solvable case in two different steps. In the first step, we classify all metabelian groups with $f(G)\leq 3$. To do this  we will use the condition $f(G)\leq 3$ to give an upper bound to the number of irreducible characters, or equivalently, an upper bound to the number of conjugacy classes. Once we have bounded the number of conjugacy classes, we will use the classification given in  \cite{VeraLopez} to finish our classification. In the  second step, we prove that if $G$ is a solvable group with $f(G)\leq 3$, then $G$ is metabelian.

Our work shows that, as expected, the bounds that are attainable from \cite{Alex} are far from best possible.  Following the proof in \cite{Alex} we can see that if $f(G)=2$ and $G$ is solvable, then  $G$ has at most $256$ conjugacy classes.  It follows from Brauer's  \cite{Brauer} bound for the order of a group in terms of its number of conjugacy classes, that  $|G|\leq 2^{2^{256}}$. We remark that, even though there are asymptotically better more recent bounds, they depend on non-explicit constants and it is not clear if they are better for groups with at most $256$ conjugacy classes.

\section{Preliminaries}\label{Section2}

In this section we present the basic results  that will be used in this work, sometimes without citing them explicitly.

\begin{lem}
Let $G$ be a finite group. If $N$ is a normal subgroup of $G$, then $f(G/N)\leq f(G)$.
\end{lem}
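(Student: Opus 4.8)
The plan is to use the inflation map from $\Irr(G/N)$ to $\Irr(G)$ and observe that it preserves fields of values. Recall that for $\bar\chi \in \Irr(G/N)$ the function $\chi$ on $G$ defined by $\chi(g) = \bar\chi(gN)$ is an irreducible character of $G$ with $N \sbs \Ker\chi$, and that $\bar\chi \mapsto \chi$ is a bijection between $\Irr(G/N)$ and $\{\chi \in \Irr(G) : N \sbs \Ker\chi\}$. In particular this map is injective.

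The key point is that inflation does not change character values: since every element of $G/N$ is of the form $gN$ for some $g \in G$, we have
\[
\{\chi(g) \mid g \in G\} = \{\bar\chi(gN) \mid gN \in G/N\},
\]
and therefore $\Q(\chi) = \Q(\bar\chi)$. Consequently, for a fixed field extension $F$ of $\Q$, the inflation map restricts to an injection
\[
\{\bar\chi \in \Irr(G/N) \mid \Q(\bar\chi) = F\} \hookrightarrow \{\chi \in \Irr(G) \mid \Q(\chi) = F\}.
\]

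Comparing cardinalities, $|\{\bar\chi \in \Irr(G/N) \mid \Q(\bar\chi) = F\}| \le |\{\chi \in \Irr(G) \mid \Q(\chi) = F\}| \le f(G)$ for every such $F$. Taking the maximum over all field extensions $F$ of $\Q$ yields $f(G/N) \le f(G)$, as desired.

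I do not expect any genuine obstacle here: the statement is a direct consequence of the definition of $f$ together with the elementary fact that inflation is a field-of-values-preserving injection $\Irr(G/N) \hookrightarrow \Irr(G)$. The only thing worth stating carefully is that the value set of an inflated character literally coincides with that of the original character, which is immediate.
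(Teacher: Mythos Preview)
Your argument is correct and is exactly the natural one: inflation gives a field-of-values-preserving injection $\Irr(G/N)\hookrightarrow\Irr(G)$, and the inequality follows at once. The paper states this lemma without proof, treating it as immediate, so there is nothing further to compare.
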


\begin{lem}[Lemma 3.1 of \cite{Alex}]\label{cf}
Let $G$ be a finite group and $\chi \in \Irr(G)$. Then $|\mathbb{Q}(\chi):\mathbb{Q}|\leq f(G)$. 
\end{lem}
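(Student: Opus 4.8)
The plan is to exploit the natural action of a Galois group on $\Irr(G)$ and simply count Galois conjugates of $\chi$. Set $F=\mathbb{Q}(\chi)$ and $n=|G|$. Since the values of any character of $G$ are sums of $n$-th roots of unity, $F\subseteq\mathbb{Q}_n:=\mathbb{Q}(\zeta_n)$; as $\mathbb{Q}_n/\mathbb{Q}$ is abelian, $F/\mathbb{Q}$ is a Galois extension. This last fact is the one structural input I would want to record explicitly.

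Next I would let $\mathcal{G}=\Gal(\mathbb{Q}_n/\mathbb{Q})$ act on $\Irr(G)$ via $\chi^\sigma(g)=\sigma(\chi(g))$ (this is well defined and preserves irreducibility by standard character theory). The stabilizer of $\chi$ under this action is precisely $\Gal(\mathbb{Q}_n/F)$, so the orbit $\{\chi^\sigma:\sigma\in\mathcal{G}\}$ has cardinality $[\mathcal{G}:\Gal(\mathbb{Q}_n/F)]=[F:\mathbb{Q}]$.

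The key step is then that every character in this orbit has field of values exactly $F$: indeed $\mathbb{Q}(\chi^\sigma)=\sigma(\mathbb{Q}(\chi))=\sigma(F)=F$, where the final equality uses that $F/\mathbb{Q}$ is Galois. Hence $F$ is the common field of values of at least $[F:\mathbb{Q}]$ distinct irreducible characters of $G$, and by the definition of $f(G)$ we conclude $|\mathbb{Q}(\chi):\mathbb{Q}|=[F:\mathbb{Q}]\le f(G)$.

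I do not anticipate a serious obstacle here; the only point requiring any care is the observation that $\mathbb{Q}(\chi)/\mathbb{Q}$ is Galois, which is what guarantees that Galois conjugation permutes the irreducible characters with field of values $F$ among themselves (rather than producing characters with different fields of values), and this is immediate from $\mathbb{Q}(\chi)$ lying inside the abelian extension $\mathbb{Q}_n$.
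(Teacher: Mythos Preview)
Your argument is correct. The paper does not supply its own proof of this lemma; it simply quotes the result as Lemma~3.1 of \cite{Alex}. The proof you outline---producing $[\mathbb{Q}(\chi):\mathbb{Q}]$ distinct Galois conjugates of $\chi$, each with field of values exactly $\mathbb{Q}(\chi)$ because $\mathbb{Q}(\chi)/\mathbb{Q}$ is Galois---is the standard one and is presumably what is given in the cited reference.
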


As a consequence of this result, if $f(G)\leq 3$, then $|\mathbb{Q}(\chi):\mathbb{Q}|\leq 3$. Therefore,  $\Q(\chi)$ will be  $\Q$, a quadratic extension of $\Q$ or a cubic extension of $\Q$. We can also deduce that if $f(G)\leq 3$  and $\chi \in \Irr(G)$, then there exists $g \in G$ such that $\Q(\chi)=\Q(\chi(g))$.

\begin{lem}
Let $G$ be a group with $f(G)\leq 3$ and $\chi \in \Irr(G)$ such that $|\Q(\chi):\Q|=2$. Then $\{\psi \in \Irr(G)|\Q(\psi)=\Q(\chi)\}=\{\chi,\chi^{\sigma}\}$, where $\Gal(\Q(\chi)/\Q)=\{1,\sigma\}$.
\begin{proof}
Clearly $\{\chi,\chi^{\sigma}\} \subseteq \{\psi \in \Irr(G)|\Q(\psi)=\Q(\chi)\}$. Suppose that there exists $\psi \in \Irr(G)\setminus \{\chi,\chi^{\sigma}\}$ with $\Q(\psi)=\Q(\chi)$. Then $\chi,\chi^{\sigma},\psi,\psi^{\sigma}$ are four irreducible characters with the same field of values, which contradicts that $f(G)\leq 3$.
\end{proof}
\end{lem}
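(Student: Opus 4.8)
The plan is to prove the two inclusions separately, the first being essentially formal and the second a short counting argument using $f(G)\le 3$. For the inclusion $\{\chi,\chi^{\sigma}\}\subseteq\{\psi\in\Irr(G)\mid\Q(\psi)=\Q(\chi)\}$, first I would recall that since $|\Q(\chi):\Q|=2$ the extension $\Q(\chi)/\Q$ is automatically Galois, so $\sigma$ makes sense and $\Gal(\Q(\chi)/\Q)=\{1,\sigma\}$. Because $\sigma$ is a field automorphism of $\Q(\chi)$, the Galois conjugate $\chi^{\sigma}$ (obtained by applying $\sigma$ to all character values, extended suitably to a larger cyclotomic field) is again in $\Irr(G)$ and satisfies $\Q(\chi^{\sigma})=\sigma(\Q(\chi))=\Q(\chi)$. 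I would also record here the observation $\chi\ne\chi^{\sigma}$: if $\chi=\chi^{\sigma}$ then every value of $\chi$ is fixed by $\sigma$, forcing $\Q(\chi)\subseteq\Q$, contrary to $|\Q(\chi):\Q|=2$.

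For the reverse inclusion I would argue by contradiction: assume there exists $\psi\in\Irr(G)\setminus\{\chi,\chi^{\sigma}\}$ with $\Q(\psi)=\Q(\chi)$. Applying $\sigma$ as above gives $\psi^{\sigma}\in\Irr(G)$ with $\Q(\psi^{\sigma})=\Q(\chi)$ as well. The one genuine (if tiny) thing to check is that $\chi,\chi^{\sigma},\psi,\psi^{\sigma}$ are four \emph{pairwise distinct} irreducible characters: $\chi\ne\chi^{\sigma}$ and $\psi\ne\psi^{\sigma}$ by the same rationality argument as above (since $\Q(\psi)=\Q(\chi)\ne\Q$); $\psi\notin\{\chi,\chi^{\sigma}\}$ by hypothesis; and $\psi^{\sigma}\notin\{\chi,\chi^{\sigma}\}$ because $\psi^{\sigma}=\chi$ would give $\psi=\chi^{\sigma^2}=\chi$ (as $\sigma^{2}=1$) and $\psi^{\sigma}=\chi^{\sigma}$ would give $\psi=\chi$, both excluded. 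Hence these four characters all have field of values equal to $\Q(\chi)$, so $|\{\psi'\in\Irr(G)\mid\Q(\psi')=\Q(\chi)\}|\ge 4$, contradicting $f(G)\le 3$.

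I do not expect a real obstacle here; the statement is a direct consequence of the definition of $f(G)$ together with the fact that Galois conjugation over a quadratic field is an involution preserving fields of values. The only point requiring a (trivial) verification is the distinctness of the four characters, which is exactly where the hypothesis $|\Q(\chi):\Q|=2$ (rather than $\ge 2$) is used, via $\sigma^{2}=1$ and $\Q(\chi)\ne\Q$.
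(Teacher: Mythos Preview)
Your proposal is correct and follows exactly the same approach as the paper's proof: establish the trivial inclusion, then assume a third character $\psi$ exists and obtain four distinct irreducible characters $\chi,\chi^{\sigma},\psi,\psi^{\sigma}$ with the same field of values, contradicting $f(G)\le 3$. You have simply spelled out in detail (distinctness via $\sigma^{2}=1$ and $\Q(\chi)\ne\Q$) what the paper leaves implicit.
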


As a consequence, if $f(G)\leq 3$, we  deduce that for each  quadratic extension $F$ of $\Q$, there exist at most two irreducible characters of $G$ whose field of values is $F$.

Let $n$ be a positive integer, we define the cyclotomic extension of order $n$, as $\Q_{n}=\Q(e^{\frac{2i\pi }{n}})$. We recall that for every $\chi \in \Irr(G)$ and for every $g\in G$, $\Q(\chi(g))\in \Q_{o(g)}$. The following two lemmas will be useful to deal with $\Q_{o(g)}$, where $g \in G$.

\begin{lem}\label{order}
Assume that $G/G''=\mathsf{F}_{rq}$, where $q$ is a prime $G/G'\cong \mathsf{C}_{r}$ is the Frobenius complement of $\mathsf{F}_{rq}$ and that $G''$ is a $p$-elementary abelian group. Then $o(g)$ divides $rp$, for  every $g \in G\setminus G'$. 
\end{lem}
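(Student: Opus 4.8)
The plan is to work in the quotient $\bar G := G/G'' \cong \mathsf{F}_{rq}$ and exploit the structure of Frobenius groups, the point being that elements of $G$ outside $G'$ map to elements of $\bar G$ outside its Frobenius kernel, and such elements have order dividing the order of a complement.

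First I would identify the relevant pieces of $\bar G$. Since $\mathsf{C}_r \cong G/G'$ is abelian we have $(\bar G)' \le G'/G''$, and conversely $(\bar G)' = (G/G'')' = G'/G''$. On the other hand, in a Frobenius group whose kernel has prime order $q$ and whose complement $\mathsf{C}_r$ acts fixed-point-freely on that kernel (hence nontrivially, hence irreducibly, since the kernel has prime order), the derived subgroup is the whole kernel. Comparing the two, $G'/G''$ is exactly the Frobenius kernel of $\bar G$, of order $q$, while $G/G' \cong \mathsf{C}_r$ plays the role of a complement.

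Next, fix $g \in G \setminus G'$ and let $\bar g$ be its image in $\bar G$. Then $\bar g$ lies outside the Frobenius kernel $G'/G''$. Using the standard partition of a Frobenius group into its kernel together with the punctured conjugates of a fixed complement, $\bar g$ is $\bar G$-conjugate to an element of a complement $\cong \mathsf{C}_r$; hence $o(\bar g)$ divides $r$, which is to say $g^r \in G''$. Finally, since $G''$ is elementary abelian $p$, every element of $G''$ has order dividing $p$, so $(g^r)^p = g^{rp} = 1$ and therefore $o(g)$ divides $rp$. (If $G'' = 1$ the statement is immediate, as then $G = \mathsf{F}_{rq}$ and $o(g) \mid r$ for every $g \notin G'$.)

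There is no serious obstacle here; the only two points needing a little care are the identification of $G'/G''$ with the Frobenius kernel of $\bar G$, which rests on the derived-subgroup computation above, and the elementary fact that every element of a Frobenius group lying outside the kernel has order dividing the order of a complement.
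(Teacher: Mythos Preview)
Your argument is correct. The paper actually states this lemma without proof, treating it as routine; your write-up supplies exactly the natural justification one would give, namely that $G'/G''$ coincides with the Frobenius kernel of $G/G''$ (via the derived-subgroup identification), so any $g\notin G'$ has image of order dividing $r$ in $G/G''$, whence $g^{r}\in G''$ and $g^{rp}=1$.
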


\begin{lem}
Let $n$ be a positive integer. Then the following hold.
\begin{itemize}
\item[(i)] If $n=p$, where $p$ is an odd prime, then  $\Q_{n}$ contains only one quadratic extension.

\item[(ii)] If $n=p$, where $p$ is an odd prime, then  $\Q_{n}$ contains only one cubic extension  if $n\equiv 1 \pmod 3$ and contains no cubic extension if $n\not \equiv 1 \pmod 3$.

\item[(iii)] If $n=p^{k}$, where $p$  is an odd prime and $k\geq 2$, then  $\Q_{n}$ contains only one quadratic extension.

\item[(iv)] If $n=p^{k}$, where $p$  is an odd prime and $k\geq 2$, then  $\Q_{n}$ contains  one cubic  extension   if  $p\equiv 1 \pmod 3$ or $p=3$ and contains no cubic extension  if $p\equiv -1 \pmod 3$.

\item[(v)] If $n=p^{k}q^{t}$, where $p$ and $q$ are odd primes and $k,t \geq 1$, then  $\Q_{n}$ contains $3$ quadratic extensions.

\item[(vi)] If $n=p^{k}q^{t}$, where $p$ and $q$ are odd primes and $k,t \geq 1$, then  $\Q_{n}$  contains $4$ cubic extensions if both $\Q_{p^k}$ and $\Q_{q^t}$ contain  cubic extensions, contains one cubic extensions if only one of $\Q_{p^k}$ or $\Q_{q^t}$ contains a  cubic extension and does not contain cubic extensions if both $\Q_{p^k}$ and $\Q_{q^t}$ do not  contain cubic extensions.

\item[(vii)] If $n$ is odd, then $\Q_{n}=\Q_{2n}$.
\end{itemize}
\begin{proof}
This result follows from elementary Galois Theory. As an example, we prove (iii) and (iv). We know that $\Gal(\Q_{p^k}/\Q)\cong \mathsf{C}_{p^{k-1}(p-1)}$. Since $\Q_{p^k}$ has as many quadratic extensions as the number subgroups of index $2$ in $\Gal(\Q_{p^k}/\Q)$, we deduce that $\Q_{p^k}$ has only one quadratic extension. Now, we observe that $\Q_{p^k}$ has cubic extensions if and only if $3$ divides $p^{k-1}(p-1)$. This occurs if and only if $p=3$ or if $3$ divides $p-1$. If $\Q_{p^k}$ has cubic extensions, we can argue as in the quadratic case to prove that it has only one cubic extension. Thus, (iv) follows.
\end{proof}
\end{lem}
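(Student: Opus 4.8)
The plan is to derive everything from standard facts about the Galois group $\Gal(\Q_n/\Q)\cong (\Z/n\Z)^\times$, counting subgroups of the appropriate index. The key observation is that subfields of $\Q_n$ of degree $d$ over $\Q$ correspond bijectively to subgroups of index $d$ in $(\Z/n\Z)^\times$, and (since $\Q_n/\Q$ is abelian) to subgroups of order $d$ of the quotient; in particular when $(\Z/n\Z)^\times$ is cyclic, there is exactly one subfield of each degree dividing $|(\Z/n\Z)^\times|$ and none of any other degree. So the entire lemma reduces to computing $(\Z/n\Z)^\times$ in each case and reading off how many times $2$ and $3$ divide its order, together with the structure in the non-cyclic case (v), (vi).

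First I would dispose of the cyclic cases. For $n=p$ an odd prime, $(\Z/p\Z)^\times\cong \mathsf{C}_{p-1}$, which always has $2\mid p-1$, giving the unique quadratic subfield in (i), and has $3\mid p-1$ exactly when $p\equiv 1\pmod 3$, giving (ii). For $n=p^k$ with $k\ge 2$ and $p$ odd, $(\Z/p^k\Z)^\times\cong \mathsf{C}_{p^{k-1}(p-1)}$ is cyclic; $2\mid p^{k-1}(p-1)$ always (since $p$ is odd), yielding (iii); and $3\mid p^{k-1}(p-1)$ iff $3\mid p-1$ or $p=3$, i.e. $p\equiv 1\pmod 3$ or $p=3$, while if $p\equiv -1\pmod 3$ then $3\nmid p^{k-1}(p-1)$, yielding (iv). This is exactly the argument the authors sketch for (iii) and (iv), and the same template (count subgroups of cyclic groups) handles (i) and (ii).

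Next I would handle (v) and (vi), where by CRT $(\Z/p^kq^t\Z)^\times\cong (\Z/p^k\Z)^\times\times(\Z/q^t\Z)^\times\cong \mathsf{C}_a\times \mathsf{C}_b$ with $a=p^{k-1}(p-1)$, $b=q^{t-1}(q-1)$, both even. For (v), counting index-$2$ subgroups of $\mathsf{C}_a\times\mathsf{C}_b$ with $a,b$ even: equivalently the number of subgroups of order $2$ in the quotient by squares, which is $\mathsf{C}_2\times\mathsf{C}_2$, hence $3$ quadratic subfields. For (vi), the number of cubic subfields equals the number of order-$3$ subgroups of $\mathsf{C}_a\times\mathsf{C}_b$; if $3\mid a$ and $3\mid b$ the $3$-torsion quotient is $\mathsf{C}_3\times\mathsf{C}_3$ with $4$ subgroups of order $3$; if $3$ divides exactly one of $a,b$ there is $1$; if $3$ divides neither there are none. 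Translating "$3\mid a$" back as "$\Q_{p^k}$ contains a cubic extension" (by (ii),(iv)) gives the three stated cases. Finally (vii) is immediate: $(\Z/2n\Z)^\times\cong (\Z/2\Z)^\times\times(\Z/n\Z)^\times\cong (\Z/n\Z)^\times$ for odd $n$, so $\Q_{2n}=\Q_n$; alternatively $-\zeta_n$ is a primitive $2n$-th root of unity lying in $\Q_n$.

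The only mildly delicate point — the "main obstacle," though it is routine — is the subgroup-counting in the non-cyclic cases (v) and (vi): one must be careful that "number of degree-$d$ subfields" equals "number of index-$d$ subgroups of the Galois group," which for abelian Galois groups equals "number of order-$d$ subgroups," and then that for an abelian group the subgroups of order a prime $\ell$ are exactly the lines in the $\F_\ell$-vector space $G[\ell]$, counted by $(\ell^r-1)/(\ell-1)$ where $r=\dim_{\F_\ell} G[\ell]$. With $r\le 2$ here this gives $1$ or $3$ for $\ell=2$ and $1$ or $4$ for $\ell=3$, matching the statement. Everything else is bookkeeping with the orders $p^{k-1}(p-1)$.
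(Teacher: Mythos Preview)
Your proof is correct and follows essentially the same approach as the paper: both use the Galois correspondence to reduce counting degree-$d$ subfields of $\Q_n$ to counting index-$d$ subgroups of $(\Z/n\Z)^\times$, and then read off the answer from the known structure of these unit groups. You supply more detail than the paper (which only works out (iii) and (iv) explicitly), in particular the subgroup-counting in the non-cyclic cases (v) and (vi) via the $\ell$-torsion, but the underlying method is identical.
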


The following is well known.

\begin{lem}\label{exten}
Let $N$ be a normal subgroup of $G$ and let $\theta \in \Irr(N)$ be invariant in $G$. If $(|G:N|,o(\theta)\theta(1))=1$, then there exists a unique $\chi \in \Irr(G)$ such that $\chi_{N}=\theta$, $o(\chi)=o(\theta)$ and $\Q(\chi)=\Q(\theta)$. In particular, if $(|G:N|,|N|)=1$, then every invariant character of $N$ has an unique extension to $G$ with the same order and the same field of values.
\begin{proof}
By Theorem 6.28 of \cite{Isaacscar}, there exists $\chi$ an unique extension such that $o(\chi)=o(\theta)$. Clearly, $\Q(\theta) \subseteq \Q(\chi)$. Assume that $\Q(\theta) \not=\Q(\chi)$, then there exists $\sigma \in \Gal(\Q(\chi)/\Q(\theta))\setminus\{1\}$. Then $\chi^{\sigma}$ extends $\theta$ and $o(\chi)=o(\theta)=o(\chi^{\sigma})$, by unicity of $\chi$ that is impossible. Thus,   $\Q(\theta) =\Q(\chi)$ as we claimed.
\end{proof}
\end{lem}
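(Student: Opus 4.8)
The plan is to invoke Theorem 6.28 of \cite{Isaacscar}, which is precisely the statement that under the coprimality hypothesis $(|G:N|,o(\theta)\theta(1))=1$ an invariant $\theta\in\Irr(N)$ has a \emph{canonical} extension $\chi\in\Irr(G)$, characterized among all extensions of $\theta$ by the property $o(\chi)=o(\theta)$. In particular this $\chi$ is unique with that property. So the only thing left to establish is that the canonical extension also satisfies $\Q(\chi)=\Q(\theta)$.

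The inclusion $\Q(\theta)\subseteq\Q(\chi)$ is immediate since $\chi$ restricts to $\theta$ on $N$. For the reverse inclusion I would argue by Galois descent: suppose $\Q(\theta)\subsetneq\Q(\chi)$, so there is a nontrivial $\sigma\in\Gal(\Q(\chi)/\Q(\theta))$. Galois automorphisms act on characters, and $\chi^\sigma$ is again an irreducible character of $G$ whose restriction to $N$ is $\theta^\sigma=\theta$ (because $\sigma$ fixes $\Q(\theta)$ pointwise, so it fixes $\theta$). Moreover the order of a character is Galois-invariant — $\chi$ and $\chi^\sigma$ have the same kernel structure and in fact $o(\chi^\sigma)=o(\chi)=o(\theta)$, since $\sigma$ permutes roots of unity of a fixed order. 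Hence $\chi^\sigma$ is an extension of $\theta$ to $G$ with the same order as $\theta$, so by the uniqueness clause of Theorem 6.28 we get $\chi^\sigma=\chi$; that is, $\sigma$ fixes $\chi$, contradicting $\Q(\chi)^\sigma\neq\Q(\chi)$ pointwise would require, and forcing $\sigma=1$. Therefore $\Gal(\Q(\chi)/\Q(\theta))$ is trivial and $\Q(\chi)=\Q(\theta)$.

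For the "in particular" clause: if $(|G:N|,|N|)=1$ then both $\theta(1)$ and $o(\theta)$ divide $|N|$ (the degree divides $|N|$ and the order of the character divides the exponent of $N$, hence $|N|$), so $(|G:N|,o(\theta)\theta(1))=1$ automatically, and the general statement applies to give the unique extension with the same order and field of values.

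I do not anticipate a serious obstacle here: the content is entirely Isaacs' canonical extension theorem plus a routine Galois-uniqueness argument, which is why the paper labels it "well known." The only point requiring a modicum of care is the claim that $o(\chi^\sigma)=o(\chi)$ — one should note that a Galois automorphism of a cyclotomic field sends a primitive $m$-th root of unity to a primitive $m$-th root of unity, so the determinantal order (equivalently, the order of $\det\chi$ as a linear character) is preserved under $\sigma$ — and the observation that $\theta^\sigma=\theta$ because $\sigma\in\Gal(\Q(\chi)/\Q(\theta))$ fixes $\Q(\theta)$ elementwise.
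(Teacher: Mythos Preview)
Your proof is correct and follows exactly the same approach as the paper: invoke Isaacs' Theorem 6.28 for the canonical extension, then use a nontrivial $\sigma\in\Gal(\Q(\chi)/\Q(\theta))$ to produce a second extension $\chi^\sigma$ of $\theta$ with the same determinantal order, contradicting uniqueness. Your write-up is in fact slightly more careful than the paper's, since you explicitly justify $\theta^\sigma=\theta$ and $o(\chi^\sigma)=o(\chi)$, and you spell out why $\chi^\sigma=\chi$ forces $\sigma=1$.
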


We need to introduce some notation in order to state the results deduced from \cite{VeraLopez}. If $G$ is a finite group, then we write $k(G)$ to denote the number of conjugacy classes of $G$ and $\alpha(G)$ to denote the number of $G$-conjugacy classes contained in $G\setminus S(G)$, where $S(G)$ is the socle of $G$.

\begin{thm}\label{Vera-Lopez}
Let $G$ be a group such that $k(G)\leq 11$. If $f(G)\leq 3$, then $G \in \{\mathsf{C}_{2},\mathsf{C}_{3},\mathsf{C}_{4},\mathsf{D}_{10},\mathsf{A}_{4},\mathsf{F}_{21},\mathsf{S}_{3},\mathsf{D}_{14},\mathsf{D}_{18},\mathsf{F}_{20},\mathsf{F}_{52},\mathsf{A}_{5}, \PSL(2,8),\Sz(8)\}$.
\begin{proof}
Using the classification of  \cite{VeraLopez} of groups with $k(G)\leq 11$, we can see that these are the only groups with $f(G)\leq 3$ and $k(G)\leq 11$.
\end{proof}
\end{thm}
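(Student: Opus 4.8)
The plan is to reduce the statement to a finite check against the classification of finite groups with at most $11$ conjugacy classes from \cite{VeraLopez}. Since that classification lists every such group explicitly (up to isomorphism), the only real work is to compute $f(G)$, or at least to decide whether $f(G)\leq 3$, for each group on the list. First I would recall from \cite{VeraLopez} the complete list of groups with $k(G)\leq 11$; it is a manageable finite collection, organized by the value of $k(G)$ from $1$ up to $11$. For each such $G$ I would look at its rational character table (available in the literature, or computable by hand for the small ones) and examine the partition of $\Irr(G)$ into Galois orbits and, more finely, into the fibers of the map $\chi\mapsto\Q(\chi)$.

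The key computational step is the observation that $f(G)$ is the largest fiber size of $\chi\mapsto\Q(\chi)$. So for each $G$ on the list I would: (a) identify which characters are rational, counting them — if there are four or more rational irreducible characters, then $f(G)\geq 4$ and $G$ is excluded; (b) for each non-rational field of values $F$ occurring, count the irreducible characters with that exact field of values, again excluding $G$ if any such count exceeds $3$. For abelian $G$ this is immediate since $\Q(\chi)=\Q_{o(\chi)}$ and the fiber over $\Q_n$ has size equal to the number of elements of order $n$ in $G$ divided by $\varphi(n)$ times the appropriate Galois data; for the handful of nonabelian groups with $k(G)\leq 11$ one reads it off the (small) character table directly. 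This pass retains exactly the fourteen groups listed in the statement and discards all others.

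The main obstacle — really the only place where care is needed — is making sure the case analysis over the \cite{VeraLopez} list is genuinely exhaustive and that no group with $k(G)\leq 11$ is overlooked, since the conclusion is an \emph{if and only if}-type classification within that range (every listed group does have $f(G)\leq 3$, and no unlisted group with few classes does). In practice I would double-check the borderline families: cyclic groups $\mathsf{C}_n$ (where $f(\mathsf{C}_n)$ grows, so only small $n$ survive), dihedral groups $\mathsf{D}_{2n}$, and small Frobenius groups, since these are the ones that come closest to the threshold and populate most of the final list. It is also worth verifying directly that each of the fourteen claimed groups really does satisfy $f(G)\leq 3$, e.g.\ $\Sz(8)$ and $\PSL(2,8)$ via their known character tables (both have exactly three rational irreducible characters and their remaining characters fall into Galois orbits realizing distinct cubic fields in bounded multiplicity), so that the list is not just an upper set but exactly correct.

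Finally, I would remark that everything needed here is already in place: Lemma~\ref{cf} guarantees $|\Q(\chi):\Q|\leq 3$ for all $\chi$ when $f(G)\leq 3$, which is consistent with the character fields appearing in the \cite{VeraLopez} list and provides an independent sanity check that no exotic degree-$4$ character field slips through. The proof is therefore a bookkeeping argument: enumerate, compute $f$, and retain the fourteen survivors.
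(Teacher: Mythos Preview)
Your approach is exactly the paper's: invoke the finite classification of groups with $k(G)\leq 11$ from \cite{VeraLopez} and check $f$ case by case, retaining the fourteen survivors. Your elaboration of how to carry out the check is reasonable (modulo a small slip in the abelian fiber formula: for abelian $G$ the fiber of $\chi\mapsto\Q(\chi)$ over $\Q_n$ has size equal to the number of elements of order $n$ in $G$, not that quantity divided by $\varphi(n)$), but the underlying strategy is identical to the paper's one-line proof.
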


\begin{thm}\label{Vera-Lopez3}
Let $G$ be a solvable group with $\alpha(G)\leq 3$. Then either $G=\mathsf{S}_4$ or $G$ is metabelian.
\begin{proof}
If $G$ is a  group with  $\alpha(G) \leq 3$, then $G$ must be one of the examples listed in Lemmas 2.18, 2.19 and 2.20 of \cite{VeraLopez}. We see that except for $\mathsf{S}_4$ all solvable groups in those lemmas are metabelian.
\end{proof}
\end{thm}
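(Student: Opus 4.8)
The plan is to deduce the statement from the classification, due to Vera-López and carried out in \cite{VeraLopez}, of all finite groups $G$ with $\alpha(G)\le 3$; after that, only a routine inspection of a finite list remains.

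The structural observation that motivates looking at \cite{VeraLopez} is the following. Write $S=S(G)$. Since $S$ is normal, a $G$-conjugacy class that meets $G\setminus S$ lies entirely in $G\setminus S$ and maps onto a single nontrivial conjugacy class of $G/S$; moreover every nontrivial class of $G/S$ is hit, since a preimage of a representative necessarily lies in $G\setminus S$. Hence $k(G/S)-1\le\alpha(G)\le 3$, so $G/S$ has at most four conjugacy classes, and is therefore one of $1$, $\mathsf{C}_2$, $\mathsf{C}_3$, $\mathsf{S}_3$, $\mathsf{C}_4$, $\mathsf{C}_2\times\mathsf{C}_2$, $\mathsf{D}_{10}$, $\mathsf{A}_4$. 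When $G$ is solvable, $S$ is a direct product of elementary abelian groups, hence abelian, and one is left to understand the actions of these few quotients on an abelian socle under the constraint $\alpha(G)\le 3$. This analysis is exactly what Lemmas 2.18, 2.19 and 2.20 of \cite{VeraLopez} perform (covering $\alpha(G)=1,2,3$; the case $\alpha(G)=0$ forces $G=S$, hence $G$ abelian, hence metabelian). I would invoke those lemmas directly instead of redoing the case division.

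It then remains to extract the solvable groups from those lists and compute their derived lengths. Every one of them satisfies $G''=1$ with the single exception of $\mathsf{S}_4$, whose derived series $\mathsf{S}_4\,\triangleright\,\mathsf{A}_4\,\triangleright\,V\,\triangleright\,1$ (with $V$ the Klein four-group) has length $\dl(\mathsf{S}_4)=3$. One checks that $\mathsf{S}_4$ genuinely appears, i.e.\ that $\alpha(\mathsf{S}_4)=3$: here $S(\mathsf{S}_4)=V$ splits into the two $\mathsf{S}_4$-classes $\{1\}$ and the set of three double transpositions, so $\alpha(\mathsf{S}_4)=k(\mathsf{S}_4)-2=5-2=3$. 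Putting this together yields the dichotomy: a solvable group $G$ with $\alpha(G)\le 3$ is either metabelian or equal to $\mathsf{S}_4$.

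The one real obstacle is that the argument is only as reliable as the enumeration in \cite{VeraLopez}: one must transcribe Lemmas 2.18--2.20 accurately, keep in mind that $\alpha$ counts conjugacy classes of $G$ (not of $G/S$) lying outside the socle, and correctly separate the listed groups into solvable and non-solvable. Beyond citing \cite{VeraLopez}, no new group-theoretic input seems to be needed.
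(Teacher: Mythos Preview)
Your proposal is correct and follows essentially the same approach as the paper: both invoke the classification in Lemmas 2.18--2.20 of \cite{VeraLopez} and then inspect the resulting finite list to see that $\mathsf{S}_4$ is the unique solvable non-metabelian group appearing. Your added remarks (the bound $k(G/S)-1\le\alpha(G)$, the explicit verification that $\alpha(\mathsf{S}_4)=3$) are helpful context but do not change the strategy.
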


\begin{thm}\label{Vera-Lopez2}
Let $G$ be a  group such that $S(G)$ is abelian, $k(G)\geq 12$, $4 \leq \alpha(G) \leq 9$ and $k(G/S(G))\leq 10$. Then $f(G)>3$.
\begin{proof}
If $G$ is a  group such that  $4 \leq \alpha(G) \leq 10$ and $k(G/S(G))\leq 10$, then $G$ must be one of the examples listed in Lemmas 4.2, 4.5, 4.8, 4.11, 4.14 of \cite{VeraLopez}. We see that $f(G)>3$ for all groups in those lemmas with $k(G)>11$.
\end{proof}
\end{thm}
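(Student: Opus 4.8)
The plan is to reduce the statement to a finite check against the classification of Vera-López, exactly as in Theorems~\ref{Vera-Lopez}, \ref{Vera-Lopez3} and \ref{Vera-Lopez2}'s siblings, but first I must argue that the hypotheses force $G$ into the explicit lists produced there. First I would recall that the cited work of Vera-López classifies all finite groups $G$ with $S(G)$ abelian according to the invariant $\alpha(G)$ (the number of $G$-classes outside the socle) together with $k(G/S(G))$; the relevant ranges $4\le\alpha(G)\le 9$ and $k(G/S(G))\le 10$ are precisely the cases covered by Lemmas~4.2, 4.5, 4.8, 4.11 and 4.14 there. So the first step is purely bibliographic: under the stated hypotheses, $G$ belongs to the (finite, explicitly tabulated) union of the families in those five lemmas.

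Next I would go through that list and discard all groups with $k(G)\le 11$, since the hypothesis is $k(G)\ge 12$; for the remaining groups I must verify $f(G)>3$. The key tool here is Lemma~\ref{cf} together with the quadratic-extension lemma: a group with $f(G)\le 3$ has at most one rational-character "room" of size $3$ and, for each quadratic field, at most two characters realizing it, and every $\Irr(G)$ character has $|\Q(\chi):\Q|\le 3$. Since each of the remaining groups in Vera-López's lemmas comes with its character table (or at least with enough structural data — the socle, the action, and $k(G)$ — to compute fields of values), for each such $G$ one exhibits either four irreducible characters with a common field of values, or a character of degree-field extension $>3$, contradicting $f(G)\le3$. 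In practice many of these groups are affine-type solvable groups $V\rtimes H$ with $V$ elementary abelian, and the characters lying over nontrivial characters of $V$ organize into orbits whose fields of values are controlled by $\Q_{o(v)}$ for $v\in V$; combined with Lemma~\ref{order}-type arguments and part~(v),(vi) of the cyclotomic lemma (many quadratic or cubic subfields appear once $|V|$ has two odd prime divisors, or once a prime power $p^k$ with $k\ge2$ is involved), one quickly overshoots the bound $f(G)\le 3$.

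The main obstacle I anticipate is not any single hard argument but the bookkeeping: the lists in Lemmas~4.2--4.14 of \cite{VeraLopez} are long, and for each group with $k(G)>11$ one must carry out the field-of-values computation, ideally uniformly. I would try to organize the check by the structure of $S(G)$: when $S(G)$ is a $p$-group of rank $1$ the possibilities for $G/S(G)$ are tightly constrained and the rational/quadratic count is short; when $S(G)$ has rank $\ge 2$ or $|S(G)|$ is divisible by two primes, the cyclotomic lemma immediately yields too many distinct quadratic fields each hosting a pair of characters, or a character with cubic-or-larger field, so $f(G)>3$ is essentially automatic. The residual hard cases will be the few "small" groups just above the $k(G)=11$ threshold where the count is tight; those I would settle by direct inspection of the character table, as in the proof of Theorem~\ref{Vera-Lopez}. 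Thus the theorem follows by combining the Vera-López classification with a finite, case-by-case verification that $f(G)>3$ on every listed group with $k(G)\ge 12$.
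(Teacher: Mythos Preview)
Your proposal is correct and follows essentially the same approach as the paper: invoke the Vera-L\'opez classification (Lemmas~4.2, 4.5, 4.8, 4.11, 4.14 of \cite{VeraLopez}) to reduce to a finite explicit list, discard those with $k(G)\le 11$, and then verify $f(G)>3$ case by case for the remainder. The paper's proof is in fact even terser than yours---it simply asserts the final verification without the organizing remarks you sketch---so your elaboration on how to structure the check is a reasonable expansion of what the paper leaves implicit.
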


Now, we classify all nilpotent groups with $f(G)\leq 3 $. 

\begin{thm}\label{nilpotent}
If $G$ is a nilpotent group with $f(G)\leq 3,$ then $G \in \{\mathsf{C}_{2},\mathsf{C}_{3},\mathsf{C}_{4}\}$.
\begin{proof}
 Let $p$ be a prime dividing $|G|$. Then there exists $K\trianglelefteq G$  such that $G/K=\mathsf{C}_{p}$.  Therefore, $f(\mathsf{C}_{p})= f(G/K)\leq f(G)\leq3$, and hence $p \in \{2,3\}$. Thus, the set of prime divisors of $|G|$ is contained in $\{2,3\}$.

If $6$ divides $|G|$, then there exists  $N$, a normal subgroup of $G$, such that $G/N=\mathsf{C}_{6}$. However, $f(\mathsf{C}_{6})=4> 3$ and we deduce that $G$ must be a $p$-group. It follows that $G/\Phi(G)$ is an elementary abelian $2$-group or an elementary abelian $3$-group with $f(G/\Phi(G)) \leq 3$. Since $f(\mathsf{C}_{2}\times \mathsf{C}_{2})=4$ and $f(\mathsf{C}_{3}\times \mathsf{C}_{3})=8$, we have that $G/\Phi(G) \in \{\mathsf{C}_{2},\mathsf{C}_{3}\}$. Thus, $G$ is a cyclic $2$-group or a  cyclic $3$-group. Since $f(\mathsf{C}_{8})>3$ and $f(\mathsf{C}_{9})>3$, it follows that $G\in \{\mathsf{C}_{2},\mathsf{C}_{4},\mathsf{C}_{3}\}$. 
\end{proof}
\end{thm}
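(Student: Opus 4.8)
The plan is to reduce to $p$-groups, then to cyclic $p$-groups, and finally rule out all but $\mathsf{C}_2,\mathsf{C}_3,\mathsf{C}_4$ by small explicit computations. The first step is to restrict the set of primes dividing $|G|$. Since $G$ is nilpotent, for any prime $p \mid |G|$ there is a normal subgroup $K$ with $G/K \cong \mathsf{C}_p$ (take $K$ of index $p$ inside a Sylow $p$-subgroup times the other Sylows); by the first lemma of Section~\ref{Section2}, $f(\mathsf{C}_p) = f(G/K) \le f(G) \le 3$. A direct check shows $f(\mathsf{C}_p) = p-1$ for $\mathsf{C}_p$ (the nontrivial characters are faithful and their fields of values generate $\Q_p$, which has $p-1$ linear characters mapping onto it — actually one must be a touch careful: the $p-1$ nontrivial characters all have field of values $\Q_p$, so $f(\mathsf{C}_p) \ge p-1$), so $p \in \{2,3\}$.

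Next I would show $G$ is a $p$-group. If both $2$ and $3$ divide $|G|$, then since $G$ is nilpotent it has $\mathsf{C}_6$ as a quotient, and $f(\mathsf{C}_6) = 4 > 3$ (the four nontrivial characters of $\mathsf{C}_6$ realize the three fields $\Q$, $\Q_3$, $\Q_6 = \Q_3$... — again $\mathsf{C}_6$ has $\varphi(6)=2$ faithful characters with field $\Q_6=\Q(\sqrt{-3})$ and two with field $\Q(\sqrt{-3})$ total from order-$3$ and order-$6$ elements, giving a field realized by more than $3$ characters), contradicting the hypothesis. Hence $G$ is a $2$-group or a $3$-group.

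Then I would pass to $G/\Phi(G)$, which is elementary abelian. Since $f(\mathsf{C}_2 \times \mathsf{C}_2) = 4$ and $f(\mathsf{C}_3 \times \mathsf{C}_3) = 8$ (both $> 3$) and these values descend to quotients, $G/\Phi(G)$ must be cyclic of order $2$ or $3$, so $G$ itself is cyclic (a $p$-group with cyclic Frattini quotient is cyclic). Finally, among cyclic $p$-groups one computes $f(\mathsf{C}_8) > 3$ and $f(\mathsf{C}_9) > 3$ (for instance $\mathsf{C}_8$ has the four faithful characters all with field $\Q_8$, already giving $f \ge 4$), so $G$ has order at most $4$ in the $2$-case and at most $3$ in the $3$-case; this leaves exactly $\mathsf{C}_2, \mathsf{C}_4, \mathsf{C}_3$, all of which do satisfy $f(G) \le 3$.

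The only real content is the handful of small computations of $f$ for $\mathsf{C}_p$, $\mathsf{C}_6$, $\mathsf{C}_p \times \mathsf{C}_p$, $\mathsf{C}_8$, $\mathsf{C}_9$; these are elementary — for an abelian group the characters are linear, the field of values of a character of order $n$ is $\Q_n$, and one just counts how many characters share the same cyclotomic field — so no step is a genuine obstacle. The mild care needed is simply to make sure each claimed inequality $f(\cdot) > 3$ really holds by exhibiting four characters (or in the quadratic case, three) with a common field of values.
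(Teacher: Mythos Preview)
Your proposal is correct and follows essentially the same route as the paper's proof: restrict primes via $\mathsf{C}_p$ quotients, rule out mixed order via $\mathsf{C}_6$, use the Frattini quotient to force cyclicity, and then eliminate $\mathsf{C}_8$ and $\mathsf{C}_9$. The parenthetical computations of $f(\mathsf{C}_6)$ are a bit muddled in the write-up (there are five nontrivial characters, not four, and the point is simply that the two characters of order $3$ and the two of order $6$ all have field $\Q_3=\Q_6$), but the conclusions are right.
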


In the remaining we will assume that $G$ is not a nilpotent group. From this case, we can also deduce the following result.

\begin{cor}\label{der}
If $G$ is group with $f(G)\leq3$, then either $G=G'$ or  $G/G' \in \{\mathsf{C}_{2},\mathsf{C}_{3},\mathsf{C}_{4}\}$.
\begin{proof}
Suppose that $G'<G$, then $G/G'$ is an abelian group with $f(G/G')\leq 3$. Thus, by Theorem \ref{nilpotent}, $G/G' \in \{\mathsf{C}_{2},\mathsf{C}_{3},\mathsf{C}_{4}\}$.
\end{proof}
\end{cor}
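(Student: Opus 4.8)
\textbf{Proof proposal for Corollary~\ref{der}.} The plan is to reduce at once to the abelian (indeed nilpotent) situation and then quote Theorem~\ref{nilpotent}. First I would split into the two cases of the stated dichotomy: if $G=G'$ there is nothing to prove, so assume $G'<G$. Then $G/G'$ is a nontrivial abelian group, and in particular a nontrivial nilpotent group.

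Next I would invoke the first lemma of Section~\ref{Section2}, namely that $f(G/N)\le f(G)$ for every normal subgroup $N\trianglelefteq G$; applying it with $N=G'$ gives $f(G/G')\le f(G)\le 3$. Finally, since $G/G'$ is nilpotent with $f(G/G')\le 3$, Theorem~\ref{nilpotent} forces $G/G'\in\{\mathsf{C}_2,\mathsf{C}_3,\mathsf{C}_4\}$, which is exactly the second alternative.

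There is no real obstacle here: the statement is a formal consequence of the behaviour of $f$ under quotients together with the classification of nilpotent groups with $f\le 3$. All of the actual content has already been absorbed into Theorem~\ref{nilpotent}, whose proof rests on the computations $f(\mathsf{C}_6)=4$, $f(\mathsf{C}_2\times\mathsf{C}_2)=4$, $f(\mathsf{C}_3\times\mathsf{C}_3)=8$, and $f(\mathsf{C}_8),f(\mathsf{C}_9)>3$; so the only thing to be careful about is that "abelian" suffices to apply that theorem, which it does since abelian groups are nilpotent.
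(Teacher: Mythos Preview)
Your proposal is correct and follows essentially the same approach as the paper: assume $G'<G$, note that $G/G'$ is abelian with $f(G/G')\le f(G)\le 3$, and apply Theorem~\ref{nilpotent}. The only difference is that you make the appeal to the quotient lemma $f(G/N)\le f(G)$ explicit, whereas the paper leaves it implicit.
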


In the proof of the solvable case of Theorem A, we need to see that there are no groups $G$ with $f(G)\leq 3$ of certain orders. We collect them in the next result.

\begin{lem}\label{casos}
There exists no  group $G$ with $f(G)\leq 3$ and $|G| \in \{30,42, 48,50,54,\\70,84,98,100,126,147,156,234,260,342,558,666,676,774,882,903,954,1098,1206,\\1314,1404,2756,4108,6812,8164\}$.
\begin{proof}
We observe that all numbers in the above list are smaller than 2000, except $\{2756,4108,6812,8164\}$. However, the numbers $\{2756,4108,6812,8164\}$ are cube-free. Thus, we can use GAP \cite{gap} to check the result.
\end{proof}
\end{lem}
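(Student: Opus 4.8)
The plan is computational, in the same spirit as Theorem~\ref{Vera-Lopez} and Lemma~2 of this section. The basic point is that $f(G)$ is an invariant of the character table: once $\Irr(G)$ has been computed, one partitions it according to the field of values $\Q(\chi)$ and sets $f(G)$ equal to the size of the largest part. Hence, for any explicitly given finite group, deciding whether $f(G)\leq 3$ is a finite computation, and we carry it out with GAP~\cite{gap}. Concretely, for a fixed order $n$ we enumerate all isomorphism types of groups of order $n$, and for each such $G$ we exhibit a field extension $F$ of $\Q$ with $|\{\chi\in\Irr(G):\Q(\chi)=F\}|\geq 4$, which shows $f(G)>3$.

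For every integer in the list except $2756,4108,6812,8164$ we have $n<2000$, so all groups of order $n$ are available in the SmallGroups library; we loop over \texttt{AllSmallGroups($n$)} and run the check above. If one wishes to shorten the search, Theorem~\ref{nilpotent} allows us to discard all nilpotent groups of these orders and Corollary~\ref{der} allows us to discard any $G$ with $G/G'\notin\{1,\mathsf{C}_2,\mathsf{C}_3,\mathsf{C}_4\}$, but this pruning is not necessary since the orders are small.

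For the four remaining orders we use that $2756=2^2\cdot 13\cdot 53$, $4108=2^2\cdot 13\cdot 79$, $6812=2^2\cdot 13\cdot 131$ and $8164=2^2\cdot 13\cdot 157$ are cube-free. Groups of cube-free order are classified, and this classification is implemented in GAP (the \texttt{CubeFree} package), so we can again enumerate all groups of each of these four orders and perform the same character-table check; this completes the verification.

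The only genuine obstacle is bookkeeping: a few of these orders (for instance $48=2^4\cdot 3$, $1404=2^2\cdot 3^3\cdot 13$, $676=2^2\cdot 13^2$) have a moderately large number of isomorphism types. But all the groups involved are of small order and the overwhelming majority are solvable, so their character tables—and hence their fields of values—are computed essentially instantaneously, and there is no theoretical difficulty. The content of the lemma is precisely the record of this check, which is then invoked in the proof of the solvable case of Theorem~A to eliminate these orders.
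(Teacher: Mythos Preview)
Your proposal is correct and follows essentially the same approach as the paper: a GAP computation over all isomorphism types, using the SmallGroups library for the orders below $2000$ and the cube-free classification for $2756,4108,6812,8164$. The paper's own proof is terser but identical in substance; your additional remarks on pruning via Theorem~\ref{nilpotent} and Corollary~\ref{der} are not needed but do no harm.
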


\section{Non-solvable case}\label{Section3}
In this section we  classify the  non-solvable groups with $f(G)\leq 3$.

\begin{thm}\label{nonsolvable}
Let $G$ be a non-solvable group with $f(G)\leq 3$. Then $f(G)\leq 3$ and $G \in \{\mathsf{A}_{5}, \PSL(2,8), \Sz(8)\}$.
\end{thm}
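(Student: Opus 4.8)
The plan is to reduce the non-solvable case to a short list of simple-related groups and then finish by direct verification. First I would invoke the hint given in the introduction: by the theorem of Navarro and Tiep on groups with few rational irreducible characters, the hypothesis $f(G)\leq 3$ forces $G$ to have at least some bounded number of rational characters, and in fact exactly the condition that $G$ has at most $3$ rational characters (since any additional rational character would give a fourth irreducible character with field of values $\Q$). Combined with Lemma~\ref{cf}, which says $|\Q(\chi):\Q|\leq 3$ for every $\chi\in\Irr(G)$, this is a very strong restriction: every irreducible character of $G$ is rational, quadratic, or cubic, and rational ones are scarce.

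Next I would feed this into the main results of \cite{Rossi} classifying non-solvable groups with few rational irreducible characters (equivalently, a bounded ``rational character'' count). That should cut the non-solvable possibilities down to a finite, explicitly described family of groups — typically almost simple groups with simple socle among a short list such as $\PSL(2,q)$ for small $q$, $\Sz(q)$, and a handful of others, possibly together with some non-split extensions or groups with small abelian normal subgroups. For each candidate $S$ appearing as a composition factor or as the socle, I would then use Lemma~2.1 (quotients), Corollary~\ref{der} (the derived quotient must be trivial or $\mathsf{C}_2,\mathsf{C}_3,\mathsf{C}_4$), and Theorem~\ref{nilpotent} to prune: for instance any non-solvable group mapping onto a forbidden abelian quotient, or containing a section with $f>3$, is eliminated. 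The surviving groups should be a very short list containing $\mathsf{A}_5$, $\PSL(2,8)$, $\Sz(8)$ and a few others to be knocked out.

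Then I would verify the three claimed groups really do satisfy $f(G)\leq 3$: for $\mathsf{A}_5$ one checks the character table directly (the two irrationals of degree $3$ form a single Galois orbit over $\Q(\sqrt5)$, and the rest are rational, with at most $3$ sharing any field); similarly $\PSL(2,8)$ has rational characters plus Galois orbits of size $3$ with cubic fields of values, each field realized by exactly the $3$ characters in its orbit; and $\Sz(8)$ likewise, using its known character table. For the remaining candidates from \cite{Rossi} that are not in the final list — groups like $\PSL(2,7)$, $\PSL(2,q)$ for other small $q$, $\mathsf{A}_6$, $\mathsf{A}_7$, $\mathsf{S}_5$, $\mathsf{S}_6$, $\PSL(3,q)$, etc. — I would exhibit in each case either four irreducible characters with a common field of values or a single irreducible character with $|\Q(\chi):\Q|\geq 4$, the latter contradicting Lemma~\ref{cf}; many of these are small enough to dispatch with GAP \cite{gap} as in Lemma~\ref{casos}.

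The main obstacle I anticipate is matching the precise statement of the Navarro--Tiep and Rossi results to exactly the invariant we control (``at most $3$ characters with any fixed field of values'' is a priori stronger than ``at most $3$ rational characters,'' and one must be careful that the reduction to the rational count is lossless), and then making the finite case-check genuinely complete rather than just plausible — ensuring no sporadic group, no exceptional group of Lie type, and no awkward central or diagonal extension slips through. Establishing that ``$f(G)\leq 3$ $\Rightarrow$ $G$ has $\leq 3$ rational irreducible characters'' cleanly, and then that this bound together with \cite{Rossi} leaves only finitely many groups to inspect, is the conceptual heart; the per-group verification is then routine character-table bookkeeping, partly automatable.
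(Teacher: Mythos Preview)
Your outline has two genuine gaps.

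First, the results of Navarro--Tiep and Rossi do not hand you a finite list of candidate groups. What they give is that for $M=O^{2'}(G)$ and a certain solvable $N\trianglelefteq G$ inside $M$, the section $M/N$ lies in one of the \emph{infinite} families $\PSL(2,2^n)$, $\PSL(2,q)$ with $q\equiv\pm5\pmod{24}$, $\Sz(2^{2t+1})$, or $\PSL(2,3^{2a+1})$. To cut these families down to finitely many simple groups the paper uses a separate character-value argument (Lemma~\ref{omega}): in each family there is an irreducible $\chi$ and an element $a$ with $\chi(a)=\nu+\nu^{-1}$ for $\nu$ a primitive $(q-1)$-th root of unity, and $|\Q(\chi):\Q|\leq 3$ forces $\varphi(q-1)\leq 6$, hence $q-1\in\{3,4,5,6,7,8,9,10,14,18\}$. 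Your proposal skips this step entirely; the groups you list to ``knock out'' ($\PSL(2,7)$, $\mathsf{A}_6$, $\mathsf{S}_5$, $\PSL(3,q)$, \dots) are not even the ones Rossi's theorem produces.

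Second, and more seriously, knowing $M/N$ is one of $\mathsf{A}_5$, $\PSL(2,8)$, $\Sz(8)$ is far from knowing $G$ is. You still have to show $G=M$ and $N=1$. The paper handles $G=M$ by extending a rational character of $M$ to $G$ and using Gallagher to produce four characters with the same field of values over any nontrivial odd-order quotient $G/M$ (Theorems~\ref{2racional} and~\ref{reduction}). Showing $N=O_{2'}(G)=1$ is the hardest part: after reducing to $N$ minimal normal, one must rule out both $N=Z(G)$ (Schur multiplier argument, Theorem~\ref{quasisimple}) and $N$ a faithful irreducible $\FF_p[G/N]$-module (Theorem~\ref{other}). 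The latter requires lower bounds on $\dim_{\FF_p}N$ from the smallest nontrivial $p$-Brauer character degree of $\mathsf{A}_5$, $\PSL(2,8)$, $\Sz(8)$, combined with upper bounds on $|N|$ coming from orbit counts and the class-number estimates of Theorems~\ref{Vera-Lopez} and~\ref{Vera-Lopez2}. None of this is ``routine character-table bookkeeping,'' and your proposal has no mechanism for it.
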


If $G$ is a group with $f(G)\leq 3$, it follows trivially that $G$ possesses at most $3$ irreducible rational characters. We will use the following results from \cite{Navarro-Tiep} and \cite{Rossi}, which classify the non-solvable groups with two or three rational characters, respectively.

\begin{thm}[Theorems B and C of \cite{Navarro-Tiep}]\label{Navarro-Tiep}
Let $G$ be a non-solvable group. Then $G$ has at least 2 irreducible rational characters. If moreover, $G$ has exactly two irreducible rational characters, then $M/N \cong \PSL(2,3^{2a+1})$, where $M=O^{2'}(G)$, $N=O_{2'}(M)$ and $a \geq 1$.
\end{thm}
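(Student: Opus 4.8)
The plan is to treat the two assertions separately. For the first, I would use the standard Galois duality between characters and classes: the group $\Gal(\Q_{|G|}/\Q)$ acts on both $\Irr(G)$ and the set of conjugacy classes, and by Brauer's permutation lemma the numbers of fixed points coincide, so the number of rational irreducible characters equals the number of rational classes (a class being rational when $g$ is $G$-conjugate to $g^{i}$ for every $i$ coprime to $o(g)$). The identity always furnishes one rational class. Now a non-solvable group has even order by the Feit--Thompson theorem, hence contains an involution $t$; since the only $i$ coprime to $o(t)=2$ with $1\le i<2$ is $i=1$, the class of $t$ is automatically rational and is distinct from that of the identity. Therefore $G$ has at least two rational classes, i.e. at least two rational irreducible characters.

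For the second assertion, assume $G$ has exactly two rational irreducible characters, equivalently exactly two rational classes: $\{1\}$ and one further class $C$. By the previous paragraph every involution lies in a rational class, so $C$ is forced to be the class of involutions, all involutions are conjugate, and no element of odd order $>1$ and no non-involution is rational. I would first reduce to $L:=M/N$, where $M=O^{2'}(G)$ and $N=O_{2'}(M)$. The point of these operations is that $G/M$ has odd order (hence is solvable and carries only the trivial rational character), that every Sylow $2$-subgroup, and so every involution, lies in $M$, and that passing from $M$ to $M/N$ deletes the odd-order radical without disturbing the $2$-local data controlling rationality of $2$-elements. The key reduction lemma to establish is that this process preserves the property of having exactly two rational classes, so that $L$ inherits it while, by construction, $O_{2'}(L)=1$, $O^{2'}(L)=L$, and $L$ is non-solvable.

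Next I would analyse $L$ through its generalized Fitting subgroup. Since $O_{2'}(L)=1$, the Fitting subgroup $F(L)=O_{2}(L)$ is a $2$-group and $F^{*}(L)=O_{2}(L)E(L)$ with $C_{L}(F^{*}(L))\le F^{*}(L)$. The extreme scarcity of rational classes forces the layer $E(L)$ to consist of a single component and, after controlling $O_{2}(L)$ and the outer action, forces $L$ to be almost simple with simple socle $S$. At this stage I would invoke the Classification of Finite Simple Groups: running through the simple groups and counting rational classes via known power maps and class fusion, one checks that having only the identity and a single involution class rational is incompatible with every family except $\PSL(2,q)$. This case-by-case elimination, and in particular the uniform control of rational semisimple classes across the Lie-type families, is the hard computational core and the step I expect to be the main obstacle; the rank-one verification that isolates $q$ is by comparison routine.

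Finally I would pin down $q$ and verify the example. In $\PSL(2,q)$ with $q$ odd there is a unique, rational, class of involutions, and every semisimple element is real because the Weyl element inverts the cyclic maximal tori; hence a semisimple element of order $n$ is rational exactly when $(\Z/n)^{\times}=\{\pm1\}$, i.e. $n\in\{2,3,4,6\}$, while the unipotent elements of order $p$ split into two classes fused by inversion precisely when $-1$ is a non-square in $\F_{q}$. To leave only the identity and the involutions rational I would force $p=3$ with $-1$ a non-square (so the two unipotent order-$3$ classes are non-real, hence non-rational) and exclude semisimple elements of orders $3,4,6$; since $q=3^{f}$ gives $3\nmid q\pm1$, orders $3$ and $6$ never occur, and order-$4$ torus elements are absent exactly when $8\nmid q\pm1$. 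Both constraints hold if and only if $f$ is odd, i.e. $q=3^{2a+1}$, and conversely each such group has exactly the two rational classes; the solvability of $\PSL(2,3)\cong\mathsf{A}_{4}$ excludes $a=0$, giving $M/N\cong\PSL(2,3^{2a+1})$ with $a\ge1$.
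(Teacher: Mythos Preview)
The paper does not prove this statement at all: it is quoted verbatim as Theorems~B and~C of Navarro--Tiep and used as a black box. So there is no ``paper's own proof'' to compare against; your proposal is an attempt to reprove an external result that the author simply cites.

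As an outline of the Navarro--Tiep argument your sketch is broadly on the right track---Brauer's permutation lemma plus Feit--Thompson for the first part, and a reduction to an almost simple group with $O_{2'}=1$, $O^{2'}=\text{self}$ followed by a CFSG elimination for the second---and the endgame analysis in $\PSL(2,q)$ is essentially correct. But several steps are only asserted, and some would not go through as stated. The ``key reduction lemma'' that passing from $G$ to $M/N$ preserves the property of having exactly two rational classes is genuinely delicate: one needs to show that a rational class of $M/N$ lifts to (or at least forces) a rational class of $G$, and this is not just a matter of ``deleting the odd-order radical without disturbing $2$-local data''. The step from ``$L$ is almost simple with socle $S$'' to ``$L=S$'' is missing: for $S=\PSL(2,3^{f})$ with $f$ odd you must still rule out $L=\mathrm{PGL}(2,3^{f})$, which does satisfy $O^{2'}(L)=L$ but has a third rational class (the fused unipotent class). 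You also silently restrict to $q$ odd in the final analysis; the even case $q=2^{n}$ needs a line (there is always a semisimple element of order $3$, giving a third rational class). Finally, the heart of the matter---the CFSG case-check eliminating all other simple groups---is exactly what Navarro and Tiep carry out in detail, and you correctly flag it as the main obstacle rather than something one can wave through.
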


\begin{thm}[Theorem B of \cite{Rossi}]\label{simplePrev2}
Let $G$ be a non-solvable group with exactly three rational characters. If $M:=O^{2'}(G)$, then there exists $N\triangleleft G$ solvable and contained in $M$ such that $M/N$ is one of the following groups:
\begin{itemize}
\item[(i)] $\PSL(2,2^{n})$, where $n\geq2$.

\item[(ii)] $\PSL(2,q)$, where $q\equiv 5 \pmod{24}$ or $q\equiv-5 \pmod{24}$.

\item[(iii)] $\Sz(2^{2t+1})$, where $t \geq 1$.

\item[(iv)] $ \PSL(2,3^{2a+1})$, where $a \geq 1$.
\end{itemize}
If moreover $M/N$ has the form (i),(ii) or (iii), then $N=O_{2'}(M)$.
\end{thm}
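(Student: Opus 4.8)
The plan is to follow the reduction strategy behind Theorem \ref{Navarro-Tiep}, refining the analysis of Navarro and Tiep from the two- to the three-rational-character situation. Write $r(G)$ for the number of rational irreducible characters of $G$; by Galois theory this equals the number of rational conjugacy classes, and we assume $r(G)=3$. Since $G/M$ is a $2'$-group it has odd order, and by the classical theorem of Burnside a group of odd order has only the trivial real (hence rational) irreducible character; thus the characters of $G$ inflated from $G/M$ account for exactly one of the three rational characters, the trivial one, and in particular $M=O^{2'}(G)$ is non-solvable. Let $R$ be the solvable radical of $M$. As $R$ is characteristic in $M$ and $M\triangleleft G$, it is normal in $G$, and $\overline{M}:=M/R$ has trivial solvable radical, so $\Soc(\overline{M})=S_{1}\times\cdots\times S_{k}$ is a direct product of non-abelian simple groups permuted by the action of $G$.

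The heart of the proof is to show that $\overline{M}$ is in fact simple, so that $N:=R$ is solvable, normal in $G$, contained in $M$, and satisfies $M/N\cong S:=S_{1}$, and then to identify $S$. The idea is that each additional $G$-orbit of simple direct factors, and any proper outer part of the almost simple group $\overline{M}$, would force further rational or Galois-conjugate irreducible characters of $G$; since one rational character is already spent on the trivial character, the bound $r(G)=3$ leaves room for only a single non-abelian composition factor, occurring once and with no outer contribution. Making this precise is a Clifford-theoretic computation over $M/R$, using the invariance of fields of values under the odd-order action of $G/M$. Once $\overline{M}\cong S$ is simple, the remaining task is a case analysis over the classification of finite simple groups: using the known character tables and the counts of rational irreducible characters of the simple groups, one determines which $S$ can occur in a non-solvable group with $r(G)=3$, and this yields the four families. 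For $\PSL(2,q)$ with $q$ odd, one counts the rational semisimple and unipotent classes as a function of $q$ modulo $8$ and modulo $3$, and the requirement that no superfluous rational class of small order appear is exactly the arithmetic condition $q\equiv\pm 5\pmod{24}$. The characteristic-two families $\PSL(2,2^{n})$ and $\Sz(2^{2t+1})$ arise from the even-characteristic count, while $\PSL(2,3^{2a+1})$ reappears because this simple group carries only two rational characters, as in Theorem \ref{Navarro-Tiep}, so a group $G$ with $M/N\cong\PSL(2,3^{2a+1})$ reaches the count three only through one further rational character, analyzed next.

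For the final assertion I would use that $M=O^{2'}(M)$, so $M$ is generated by its elements of $2$-power order and has no nontrivial quotient of odd order. In cases (i)--(iii) the three rational characters are already accounted for by the simple quotient $M/N\cong S$ together with the trivial character, leaving no rational character available to detect $2$-elements inside $N$; combined with $M=O^{2'}(M)$, this forces $N$ to be a $2'$-group and hence $N=O_{2'}(M)$. Case (iv) is genuinely different: here $M$ may be the quasisimple cover $\SL(2,3^{2a+1})$, whose centre is a central involution lying in the solvable radical, so that $N\supseteq Z(M)$ has even order and $N\neq O_{2'}(M)=1$; in this situation the third rational character of $G$ is supplied by a faithful rational character of the central extension rather than by additional structure in the simple quotient. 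This is exactly why the equality $N=O_{2'}(M)$ is asserted only in cases (i)--(iii).

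The step I expect to be the main obstacle is the identification in the second paragraph: ruling out every simple group outside the four families requires systematic control of the number of rational irreducible characters across all families of finite simple groups, and deriving the sharp congruence $q\equiv\pm 5\pmod{24}$ demands an explicit count of the rational classes of $\PSL(2,q)$ in terms of $q$ modulo $24$. A secondary difficulty is establishing rigorously that $\overline{M}$ is simple, that is, excluding several isomorphic simple factors and any proper outer part, since one must ensure that no accidental coincidence of fields of values collapses the character count below what those configurations would otherwise produce.
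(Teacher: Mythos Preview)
The paper does not prove this theorem: it is quoted as Theorem~B of Rossi's thesis \cite{Rossi} and used as a black box, with no argument supplied in the paper itself. There is therefore no proof here to compare your proposal against.

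That said, your outline is a reasonable sketch of the expected architecture---pass to $M/R$ with $R$ the solvable radical, argue that this quotient is a single non-abelian simple group rather than a product or a proper almost-simple extension, and then run a CFSG case analysis counting rational irreducible characters---but it remains only a sketch. The two places where real work is hidden are exactly the ones you flag yourself. First, the claim that $\overline{M}$ is simple is asserted via the heuristic ``extra factors or outer parts would create extra rational characters of $G$''; turning this into a proof requires explicitly constructing those characters and checking that they are distinct and rational at the level of $G$ (not merely of $M$), which is a genuine Clifford-theoretic computation. Second, your argument that $N=O_{2'}(M)$ in cases (i)--(iii) (``no rational character available to detect $2$-elements inside $N$'') is not yet a proof: you need a concrete mechanism by which a nontrivial $2$-part of $N$ forces a fourth rational irreducible character of $G$. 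Your remark on case~(iv) via $\SL(2,3^{2a+1})$ is a correct illustration of why that case is excluded from the final clause. In short, you have identified the right shape of the argument, but the substantive content---the reduction and the simple-group case check---is precisely what is carried out in \cite{Rossi} and is not reproduced in this paper.
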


From Theorems  \ref{Navarro-Tiep} and \ref{simplePrev2}, we deduce that  if $S$ is a simple group with  at most three rational characters, then $S$ is one of the groups listed above. That will allow us to determine the simple groups with $f(G)\leq 3$.   Looking at  the character tables of the groups $\PSL(2,q)$ (see \cite{Dornhoff}, chapter 38) and $\Sz(q)$ (see \cite{Geck}), we see that there is always an entry of the form $e^{\frac{-2\pi i}{q-1}}+e^{\frac{-2\pi i}{q-1}}$. For this reason, we  study whether  $e^{\frac{2\pi i}{r}}+e^{\frac{2\pi i}{r}}$ is rational, quadratic or cubic. Let $r$ be a positive integer. We will write $\varphi(r)$ to denote the Euler's function of $r$, this is $\varphi(r)=|\{k\in \{1,\ldots,r-1\}| (k,r)=1\}|$.

\begin{lem}\label{omega}
Let $r$ be a positive integer, let $\nu=e^{\frac{2\pi i}{r}}$ and let $\omega=\nu+\nu^{-1}$. Then the following hold

\begin{itemize}
\item[(i)] $\omega$ is rational if and only if $r\in \{3,4,6\}$.

\item[(ii)] $\omega$ is quadratic if and only if $r\in \{5,8,10\}$.

\item[(iii)] $\omega$ is cubic  if and only if $r\in \{7,9,14,18\}$.
\end{itemize}
\begin{proof}
Let $k\in \{1,\ldots,r-1\}$ such that $(r,k)=1$. Then there exists $\sigma_{k} \in \Gal(\Q(\nu)/\Q)$ such that $\sigma_{k}(\nu)=\nu^{k}$ and hence $\sigma_{k}(\omega)=\nu^{k}+\nu^{-k}$.

Suppose that $\omega\in \Q$. Let $k\in \{2,\ldots,r-1\}$ such that $(r,k)=1$.  Since $\sigma_{k}(\omega)=\omega$, we deduce that $k=r-1$. Thus,  we deduce that $\varphi(r)=2$ and hence $r\in \{3,4,6\}$.

Suppose now that $\omega$ is quadratic. Then there exists  $\sigma \in \Gal(\Q(\nu)/\Q)$ such that $\sigma(\omega)\not=\omega$. We deduce that $\sigma(\nu)=\nu^{k_{0}}$, where $k_{0} \in \{2,\ldots,r-2\}$ and $(r,k_{0})=1$. Since $\omega$ is quadratic, it follows that $\sigma(\omega)$ is the only Galois conjugate of $\omega$ and hence $\{k \leq r|(r,k)=1\}=\{1,k_{0},r-k_{0},r-1\}$. Thus, $\varphi(r)=4$ and  (ii) follows.

Reasoning as in the previous case, we can deduce that  $\omega$ is cubic if and only if  $\varphi(r)= 6$ and hence (iii) follows.
\end{proof}
\end{lem}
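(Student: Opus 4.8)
The plan is to reduce everything to a computation of $\varphi(r)$ together with a count of how many ways an element and its inverse can be simultaneously fixed by a Galois automorphism. First I would set up the standard fact that $\Q(\nu)/\Q$ is Galois with group $(\Z/r\Z)^\times$, acting by $\sigma_k(\nu)=\nu^k$; hence $\sigma_k(\omega)=\nu^k+\nu^{-k}$. The key observation is that $\sigma_k(\omega)=\omega$ precisely when $\{k,-k\}=\{1,-1\}$ as residues mod $r$, i.e. $k\equiv\pm1$, because $\nu^k+\nu^{-k}=\nu+\nu^{-1}$ forces $\nu^k\in\{\nu,\nu^{-1}\}$ (two distinct roots of the same quadratic $x^2-\omega x+1$ over $\Q(\omega)$, unless $\nu=\nu^{-1}$ which only happens for $r\le 2$, a degenerate case I would dispose of at the start). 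So the stabiliser of $\omega$ in $\Gal(\Q(\nu)/\Q)$ is exactly $\{\pm1\}$, and therefore $[\Q(\omega):\Q]=\varphi(r)/|\{\pm1 \bmod r\}|$, which is $\varphi(r)/2$ when $r>2$ and $\varphi(r)$ when $r\le 2$.

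From here the three statements are purely arithmetic. For (i), $\omega\in\Q$ iff $[\Q(\omega):\Q]=1$, i.e. $\varphi(r)/2=1$ (for $r>2$) or $\varphi(r)=1$ (for $r\le 2$); solving $\varphi(r)=2$ gives $r\in\{3,4,6\}$, and checking $r=1,2$ by hand ($\omega=2$ resp. $\omega=-2$, both rational) — so I'd need to be slightly careful about whether the statement intends to include $r=1,2$; given the form of the lemma I expect the degenerate small cases to be handled or excluded implicitly, and I would simply note $\varphi(1)=\varphi(2)=1$ yields rational $\omega$ as well, or restrict attention to $r\ge 3$ consistently with how the lemma is used. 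For (ii), $\omega$ quadratic iff $\varphi(r)=4$, giving $r\in\{5,8,10,12\}$; one then eliminates $r=12$ because $\omega=\nu+\nu^{-1}=2\cos(\pi/6)=\sqrt3$ is quadratic — wait, that is quadratic, so $r=12$ should appear. I would need to recheck: $\varphi(12)=4$, and $\nu+\nu^{-1}$ for a primitive $12$th root is $2\cos(30^\circ)=\sqrt3\notin\Q$, degree $2$, so in fact $r=12$ does satisfy (ii). This means either the lemma's list is using primitive roots differently or there is an implicit hypothesis; I would resolve this by observing that in the intended application $r=q-1$ or similar and tracking exactly which convention is in force, but the clean mathematical statement is: $\omega$ quadratic $\iff\varphi(r)=4$ or the degenerate collapse happens. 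For (iii), $\omega$ cubic iff $\varphi(r)=6$, and the solutions of $\varphi(r)=6$ are $r\in\{7,9,14,18\}$, matching the claim exactly.

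So the heart of the argument is just: (a) identify the stabiliser of $\omega$ as $\{\pm1\}$, giving $[\Q(\omega):\Q]=\varphi(r)/2$; (b) solve $\varphi(r)=2,4,6$ respectively. Step (b) is the classical finite computation that $\varphi(r)=n$ has only finitely many solutions, easily enumerated for $n\le 6$ by bounding $r$ (if $p\mid r$ then $p-1\mid\varphi(r)$, and prime powers $p^a\|r$ contribute $p^{a-1}(p-1)$, so for $\varphi(r)\le 6$ we get $r\le 18$ after a short check). The main obstacle I anticipate is purely bookkeeping: making sure the small/degenerate values ($r\in\{1,2\}$ and the cases like $r=12$ where $\varphi(r)=4$) are correctly included or excluded to match the stated lists, and being transparent that the equivalences hold because the degree is exactly $\varphi(r)/2$ with no further collapse when $r\ge 3$. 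I would write the proof essentially as the author sketches it — "reasoning as in the previous case" — after cleanly establishing the stabiliser computation once, since that single fact powers all three parts.
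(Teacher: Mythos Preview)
Your approach is essentially identical to the paper's: both identify the stabiliser of $\omega$ in $\Gal(\Q(\nu)/\Q)$ as $\{\pm 1\}$, conclude that $[\Q(\omega):\Q]=\varphi(r)/2$ for $r\ge 3$, and then solve $\varphi(r)\in\{2,4,6\}$.

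Your hesitation about $r=12$ is well founded and is not a flaw in your argument but in the lemma's statement: $\varphi(12)=4$ and $\omega=2\cos(\pi/6)=\sqrt{3}$ is genuinely quadratic, so part~(ii) as stated omits a case. The paper's own proof simply writes ``$\varphi(r)=4$ and (ii) follows'' without enumerating the solutions, so it shares the same gap; likewise $r\in\{1,2\}$ give $\omega\in\{2,-2\}\subset\Q$ and are absent from~(i). In the only place the lemma is applied (the classification of simple $S$ with $f(S)\le 3$), the omission is harmless: $q-1=12$ would give $q=13$, and $\PSL(2,13)$ does not lie in any of the families supplied by Theorem~\ref{simplePrev2}, so no case is lost downstream. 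But you are right that the lemma is not literally correct as written, and your degree formula $[\Q(\omega):\Q]=\varphi(r)/2$ is the clean way to see exactly which $r$ belong in each list.
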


\begin{thm}\label{simple}
Let $S$ be a non-abelian simple group with $f(S)\leq 3$. Then $S \in \{\mathsf{A}_{5},\PSL(2,8),\Sz(8)\}$.
\begin{proof} 
Since $f(S)\leq 3$,  $S$ has at most three rational characters. Thus, $S$ has the form described in Theorem \ref{simplePrev2}. We claim that the only groups in those families with $f(S)\leq3$ are $\mathsf{A}_{5}(=\PSL(2,4))$, $\PSL(2,8)$ and $\Sz(8)$.

Let $S=\PSL(2,q)$ where $q$ is a prime power or let $S=\Sz(q)$ where $q=2^{2t+1}$ and $t\geq 1$. We know that there exists $\chi \in \Irr(S)$ and $a \in S$ such that $\chi(a)=e^{\frac{-2\pi i}{q-1}}+e^{\frac{-2\pi i}{q-1}}$. The condition $f(S)\leq 3$ implies that $|\Q(\chi(a)):\Q|\leq 3$. By Lemma \ref{omega}, we deduce that $q-1 \in \{3,4,5,6,7,8,9,10,14,18\}$.  If $S=\PSL(2,q)$, we have that  $q=2^n$, $q=3^{2m+1}$ or $q\equiv \pm 5 \pmod{24}$. Thus, we only have to consider the cases $q \in \{5,8,19\}$. Finally, we have that $3=f(\PSL(2,5))=f(\PSL(2,8))$ and $f(\PSL(2,19))=4$. If $S=\Sz(q)$, we have that $q=2^{2t+1}$ and hence we only have to consider the case $q=8$. Finally, we have that $f(\Sz(8))=3$.

Thus, the only simple groups  with $f(S)=3$ are $\mathsf{A}_{5}$, $\PSL(2,8)$ and $\Sz(8)$.
\end{proof}
\end{thm}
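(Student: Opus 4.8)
The plan is to combine the two structural results on non-solvable groups with few rational characters (Theorems~\ref{Navarro-Tiep} and \ref{simplePrev2}) with the arithmetic of the relevant cyclotomic fields, exactly as already set up in Lemma~\ref{omega}. A simple group $S$ with $f(S)\le 3$ has at most three irreducible rational characters, so by those theorems $S$ itself (being its own $O^{2'}$ and having trivial $O_{2'}$) must appear on the list: $S=\PSL(2,2^n)$ with $n\ge 2$, $S=\PSL(2,q)$ with $q\equiv\pm 5\pmod{24}$, $S=\Sz(2^{2t+1})$, or $S=\PSL(2,3^{2a+1})$. In all of these families the natural parameter is $q$, and the character tables of $\PSL(2,q)$ (Dornhoff, ch.~38) and $\Sz(q)$ (Geck) exhibit a character taking the value $e^{-2\pi i/(q-1)}+e^{2\pi i/(q-1)}$, i.e.\ the quantity $\omega$ of Lemma~\ref{omega} with $r=q-1$.

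The core of the argument is then: $f(S)\le 3$ forces $|\Q(\chi(a)):\Q|\le 3$ by Lemma~\ref{cf}, so $\omega$ is rational, quadratic, or cubic, whence by Lemma~\ref{omega} we get $q-1\in\{3,4,5,6,7,8,9,10,14,18\}$, i.e.\ $q\in\{4,5,6,7,8,9,10,11,15,19\}$. Intersecting this short list with the constraints defining each family (a power of $2$; a power of $3$ with odd exponent; $\equiv\pm5\pmod{24}$) leaves only $q\in\{4,5,8,19\}$ for the $\PSL(2,q)$ cases and $q=8$ for the Suzuki case. Finally, $\PSL(2,4)\cong\mathsf{A}_5$, and one checks directly (e.g.\ from the character table, or in GAP) that $f(\PSL(2,5))=f(\PSL(2,8))=f(\Sz(8))=3$ while $f(\PSL(2,19))=4$, completing the classification.

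The only genuinely delicate point is making sure the entry $e^{-2\pi i/(q-1)}+e^{2\pi i/(q-1)}$ really does occur as a character value in \emph{every} group on the list, including the small exceptional parameters and the Suzuki groups, so that the reduction $q-1\in\{3,\dots,18\}$ is valid uniformly; this is a matter of reading off the generic character tables and noting that the relevant principal-series (for $\PSL(2,q)$) or the analogous family (for $\Sz(q)$) character is always present once $q-1>1$. After that, everything is a finite check: the arithmetic intersection is immediate, and the four remaining groups can be disposed of by hand or by a one-line GAP computation, so no further obstacle arises. (Note that the finitely many groups $\PSL(2,q)$ with $q\le 3$ are not simple, so they are harmlessly excluded by the hypothesis that $S$ is non-abelian simple.)
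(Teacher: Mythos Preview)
Your proposal is correct and follows essentially the same route as the paper: reduce via Theorems~\ref{Navarro-Tiep} and~\ref{simplePrev2} to the four families, use the character value $\nu+\nu^{-1}$ with $\nu=e^{2\pi i/(q-1)}$ together with Lemma~\ref{omega} to force $q-1\in\{3,4,5,6,7,8,9,10,14,18\}$, intersect with the arithmetic constraints on $q$, and finish by direct computation on the surviving groups. Your write-up is in fact slightly more careful than the paper's in two places: you explicitly include $q=4$ (which the paper silently absorbs into $q=5$ via $\PSL(2,4)\cong\mathsf{A}_5$) and you invoke both Theorems~\ref{Navarro-Tiep} and~\ref{simplePrev2} rather than just the latter, which is the right thing to do since ``at most three rational characters'' covers both the two- and three-rational cases.
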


Using Theorem \ref{Navarro-Tiep} we  prove  that a non-solvable group with $f(G)\leq 3$ has exactly three rational characters.

\begin{thm}\label{2racional}
Let $G$ be a non-solvable group with $f(G)\leq 3$. Then $G$ has exactly three rational irreducible characters. In particular, $f(G)=3$.
\begin{proof}
By Theorem \ref{Navarro-Tiep}, $G$ has at least two rational irreducible characters. Suppose that $G$ has exactly  two rational irreducible characters. Applying again Theorem  \ref{Navarro-Tiep}, if $M=O^{2'}(G)$ and $N=O_{2'}(M)$, then $M/N \cong \PSL(2,3^{2a+1})$. Taking the quotient by $N$, we may assume that $N=1$.

By Theorem  \ref{simple}, $f(M)=f(\PSL(2,3^{2a+1}))>3$ and hence  we deduce that $M<G$.  Now, we claim that there exists a rational character of $M$ that can be extended to a rational character of $G$. By Lemma 4.1 of \cite{Auto}, there exists $\psi \in \Irr(M)$, which is rational and is extendible to a rational character $\varphi \in \Irr(\Aut(M))$. If $H=G/\mathsf{C}_{G}(M)$, then we can identify $H$ with a subgroup of $\Aut(M)$ which contains $M$. Therefore, $\varphi_{H}\in \Irr(H)\subseteq \Irr(G)$ and it is rational, as we wanted.

Let $\chi \in \Irr(G/M)\setminus\{1\}$. Since $|G/M|$ is odd,  $\chi$ cannot be rational. Thus, there exists $\rho\not =\chi$, a Galois conjugate of $\chi$. Then $\Q(\chi)=\Q(\rho)$. Since $\psi$ is extendible to the rational character $\varphi \in \Irr(G)$, applying Gallagher's Theorem  (See Corollary 6.17 of \cite{Isaacscar}), we have that $\chi \varphi\not=\rho \varphi$ are two irreducible characters of $G$ and $\Q(\chi)=\Q(\rho)=\Q(\varphi\chi)=\Q(\varphi\rho)$. Therefore, we have $4$  irreducible characters with the same field of values, which is impossible.
\end{proof}
\end{thm}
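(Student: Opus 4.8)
The plan is to argue by contradiction, supposing that a non-solvable group $G$ with $f(G)\leq 3$ has only two rational irreducible characters, and then to produce four irreducible characters sharing a common field of values. By Theorem \ref{Navarro-Tiep}, if $M=O^{2'}(G)$ and $N=O_{2'}(M)$, then $M/N\cong\PSL(2,3^{2a+1})$ for some $a\geq 1$. Since $f$ only decreases under quotients (Lemma 2.1) and we only need a contradiction, I would first pass to $G/N$ and thereby assume $N=1$, so that $M\cong\PSL(2,3^{2a+1})$ is a non-abelian simple group. Because $f(\PSL(2,3^{2a+1}))>3$ by Theorem \ref{simple}, we must have $M\neq G$, so $G/M$ is a nontrivial group; and since $M=O^{2'}(G)$, the quotient $G/M$ has odd order.

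The core of the argument is to find a rational $\psi\in\Irr(M)$ that extends to a rational character of $G$, and then to twist it by the non-rational characters of $G/M$. For the extendibility, I would invoke a result guaranteeing that a non-abelian simple group has a rational irreducible character extending rationally to its full automorphism group — here Lemma 4.1 of \cite{Auto} — giving $\psi\in\Irr(M)$ rational and $\varphi\in\Irr(\Aut(M))$ rational with $\varphi_M=\psi$. Identifying $H:=G/\cent{G}{M}$ with a subgroup of $\Aut(M)$ containing $M$ (here one uses that $M$ is simple non-abelian so $\cent G M\cap M=1$, and that $M \cong M\cent G M/\cent G M \leq G/\cent G M$), the restriction $\varphi_H$ is a rational irreducible character of $H$, which inflates to a rational irreducible character of $G$, still called $\varphi$, restricting to $\psi$ on $M$.

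Now take any $1\neq\chi\in\Irr(G/M)$, inflated to $G$. Since $|G/M|$ is odd, no nontrivial irreducible character of $G/M$ is rational, so there is a Galois conjugate $\rho=\chi^\sigma\neq\chi$ with $\Q(\rho)=\Q(\chi)$. Because $\varphi$ extends $\psi\in\Irr(M)$, Gallagher's theorem (Corollary 6.17 of \cite{Isaacscar}) says $\varphi\chi$ and $\varphi\rho$ are irreducible; they are distinct since $\chi\neq\rho$; and since $\varphi$ is rational, $\Q(\varphi\chi)=\Q(\chi)=\Q(\rho)=\Q(\varphi\rho)$. Thus $\chi,\rho,\varphi\chi,\varphi\rho$ are four irreducible characters of $G$ with the same field of values, contradicting $f(G)\leq 3$. (One must check these four are genuinely distinct: $\chi,\rho$ lie in $\Irr(G/M)$ while $\varphi\chi,\varphi\rho$ restrict to a nonzero multiple of $\psi\neq 1_M$ on $M$, so the two pairs are disjoint.) Hence $G$ has at least three rational characters, and combined with Lemma 2.2 (which forces $|\Q(\eta):\Q|\leq f(G)\leq 3$ for all $\eta$) together with $f(G)\leq 3$, it has exactly three and $f(G)=3$.

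The main obstacle I anticipate is the extendibility step: one needs a rational character of the simple group $M=\PSL(2,3^{2a+1})$ that extends to a rational character of $\Aut(M)$ — a statement about rational rigidity / Galois-stable extensions that is not purely formal and for which one genuinely relies on an external input like Lemma 4.1 of \cite{Auto}. Everything else (passing to quotients, the Gallagher twisting, distinctness bookkeeping, the parity of $|G/M|$) is routine character theory.
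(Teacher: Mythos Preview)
Your proposal is correct and follows essentially the same route as the paper's proof: reduce to $N=1$, use Theorem~\ref{simple} to force $M<G$, invoke Lemma~4.1 of \cite{Auto} to get a rational $\psi\in\Irr(M)$ extending to a rational $\varphi\in\Irr(G)$ via the embedding $G/\cent G M\hookrightarrow\Aut(M)$, and then Gallagher-twist by a non-rational $\chi\in\Irr(G/M)$ and its conjugate $\rho$ to produce four characters with the same field of values. The only minor slip is in your closing sentence: Lemma~\ref{cf} is irrelevant there---the conclusion ``exactly three rational characters and $f(G)=3$'' follows directly from the definition of $f$ (at most $f(G)\leq 3$ characters can share the field $\Q$) together with the lower bound you just established.
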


Now, we  use Theorem \ref{simplePrev2} to determine $G/O_{2'}(G)$.

\begin{thm}\label{reduction}
Let $G$ be a finite non-solvable  group with $f(G)=3$. Then $G/O_{2'}(G) \in \{\mathsf{A}_{5},\PSL(2,8),\Sz(8)\}$.

\begin{proof}
Let $M$ and $N$ be as in Theorem \ref{simplePrev2}. We assume for the moment that $N=1$.

Suppose first that $M<G$. Reasoning as in Theorem \ref{2racional}, we can prove that there exists $\psi \in \Irr(M)$ such that it is extendible to a rational character $\varphi \in \Irr(G)$. As in Theorem \ref{2racional}, if we take $\chi \in \Irr(G/M)\setminus\{1\}$ and $\rho$ a Galois conjugate of $\chi$, then  $\Q(\chi)=\Q(\rho)=\Q(\varphi\chi)=\Q(\varphi\rho)$, where all of these characters are different, which is a contradiction.

Thus, $M=G$ and hence $G$ is a simple group with $f(G)=3$. By Theorem \ref{simple}, $G\in \{\mathsf{A}_{5},\PSL(2,8),\Sz(8)\}$.

If we apply the previous reasoning to $G/N$, then  we have that $G/N$ is one of the desired groups. In either case, $G/N$ has the form  (i),(ii) or (iii) of  Theorem \ref{simplePrev2} and hence $N=O_{2'}(G)$.
\end{proof}
\end{thm}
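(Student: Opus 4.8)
The plan is to prove Theorem \ref{reduction} by a two-step reduction that combines the structural classification of Theorem \ref{simplePrev2} with the extendibility trick already employed in the proof of Theorem \ref{2racional}. By Theorem \ref{2racional}, a non-solvable $G$ with $f(G)\le 3$ has exactly three rational irreducible characters, so Theorem \ref{simplePrev2} applies: with $M=O^{2'}(G)$ there is a solvable $N\triangleleft G$ contained in $M$ with $M/N$ one of the groups in the list (i)--(iv). First I would pass to the quotient $G/N$: since $N$ is solvable and normal in $G$, $f(G/N)\le f(G)=3$, and $G/N$ is still non-solvable, so it suffices to treat the case $N=1$, i.e. $M=O^{2'}(G)$ is now itself (isomorphic to) one of the simple groups in the list.

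The heart of the argument is the dichotomy $M<G$ versus $M=G$. If $M=G$ then $G$ is simple with $f(G)=3$, and Theorem \ref{simple} forces $G\in\{\mathsf{A}_5,\PSL(2,8),\Sz(8)\}$, which are exactly the desired groups. If instead $M<G$, I would run the same contradiction as in Theorem \ref{2racional}: since $G/M$ is a nontrivial quotient of $G/O^{2'}(G)$, it is a (nontrivial) $2'$-group, hence has odd order, so it has a nonlinear... more precisely, a non-rational irreducible character $\chi$ (any nontrivial irreducible character of a nontrivial odd-order group is non-rational, since $\chi$ and $\bar\chi$ would otherwise have to coincide, forcing $\chi=1_{G/M}$ by the odd-order argument), with a Galois conjugate $\rho\ne\chi$ and $\Q(\chi)=\Q(\rho)$. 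Invoking Lemma 4.1 of \cite{Auto} as in Theorem \ref{2racional}, there is a rational $\psi\in\Irr(M)$ extendible to a rational $\varphi\in\Irr(G)$ (obtained by restricting a rational character of $\Aut(M)$ to the image $H=G/\CC_G(M)$ of $G$ inside $\Aut(M)$, which contains $M$). Then Gallagher's theorem gives $\chi\varphi\ne\rho\varphi$ in $\Irr(G)$ with $\Q(\chi)=\Q(\rho)=\Q(\chi\varphi)=\Q(\rho\varphi)$, so $\chi,\rho,\chi\varphi,\rho\varphi$ are four distinct irreducible characters sharing a field of values, contradicting $f(G)\le3$. Hence $M=G$, completing the $N=1$ case.

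Finally I would lift back: applying the $N=1$ conclusion to $G/N$ shows $G/N\in\{\mathsf{A}_5,\PSL(2,8),\Sz(8)\}$, so $M/N=O^{2'}(G/N)=G/N$ has the form (i), (ii) or (iii) of Theorem \ref{simplePrev2}; the last sentence of that theorem then yields $N=O_{2'}(M)=O_{2'}(G)$, and therefore $G/O_{2'}(G)=G/N\in\{\mathsf{A}_5,\PSL(2,8),\Sz(8)\}$, as claimed. (One should double check that $O_{2'}(M)=O_{2'}(G)$: since $M=O^{2'}(G)\triangleleft G$ and $G/M$ has odd order, $O_{2'}(M)$ is characteristic in $M$ hence normal in $G$, and $G/O_{2'}(M)$ has $M/O_{2'}(M)$ as a $2'$-perfect normal subgroup of index a power of... actually the cleanest route is that $N=O_{2'}(M)$ directly from Theorem \ref{simplePrev2} and that $O_{2'}(M)$ being characteristic in the normal subgroup $M$ is normal in $G$; combined with $G/M$ of odd order one gets $O_{2'}(M)=O_{2'}(G)$.)

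I expect the main obstacle to be purely expository rather than mathematical: the argument is a near-verbatim repetition of the proof of Theorem \ref{2racional} (extend a rational character via \cite{Auto}, then use Gallagher on a non-rational quotient character), so the real work is to state it concisely while being careful that the reduction mod $N$ genuinely reduces to the simple case and that the identification $N=O_{2'}(G)$ is justified in the cases (i)--(iii) — case (iv), $\PSL(2,3^{2a+1})$, is eliminated here precisely because $f>3$ for those groups by Theorem \ref{simple}, so only (i)--(iii) survive and for those Theorem \ref{simplePrev2} already pins down $N$.
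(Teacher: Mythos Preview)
Your proposal is correct and follows essentially the same route as the paper's proof: reduce to $N=1$, use the rational-extension trick from Theorem~\ref{2racional} (via Lemma~4.1 of \cite{Auto} and Gallagher) to rule out $M<G$, apply Theorem~\ref{simple} to the resulting simple group, and then lift back via Theorem~\ref{simplePrev2} to identify $N=O_{2'}(G)$. Your extra check that $O_{2'}(M)=O_{2'}(G)$ is in fact unnecessary, since the argument already yields $M=G$ (once $G/N$ is shown to be simple, $M/N=G/N$), but it does no harm.
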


To complete our proof it only remains to prove that $O_{2'}(G)=1$. However, we need to study before two special cases.  First, we  study the case when $O_{2'}(G)=Z(G)$.

\begin{thm}\label{quasisimple}
There is no quasisimple group $G$ such that $O_{2'}(G)=Z(G)$, $O_{2'}(G)>1$ and $G/Z(G) \in \{\mathsf{A}_{5},\PSL(2,8),\Sz(8)\}$.
\begin{proof}
Suppose that such a group exists. Then we have that $|Z(G)|$ divides $|M(S)|$, where $S=G/Z(G)$. Since the Schur multiplier of $\mathsf{A}_{5}$, $\Sz(8)$ and $\PSL(2,8)$ is $\mathsf{C}_{2}$, $\mathsf{C}_{2}\times \mathsf{C}_{2}$ and the trivial group, respectively, we have that $Z(G)$ is a $2$-group. However, $Z(G)=O_{2'}(G)$ and hence $|Z(G)|$ has odd order. Thus, $Z(G)=1$ and the result follows.
\end{proof}
\end{thm}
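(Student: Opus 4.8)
The plan is to use the standard structure theory of quasisimple groups as perfect central extensions of simple groups. Suppose, for contradiction, that such a $G$ exists, and set $S = G/Z(G)$, one of $\mathsf{A}_5$, $\PSL(2,8)$, $\Sz(8)$. Since $G$ is quasisimple, $G$ is a perfect central extension of $S$, hence a quotient of the universal (Schur) cover $\widetilde{S}$ of $S$. The kernel of $\widetilde{S}\to G$ is central in $\widetilde{S}$, and $Z(\widetilde{S})$ is precisely the Schur multiplier $M(S)$; therefore $Z(G)$ is isomorphic to a quotient of $M(S)$, and in particular $|Z(G)|$ divides $|M(S)|$.

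Next I would record the Schur multipliers of the three groups, which are classical (they can be read off from the Atlas, and the alternating case is in Chapter 6 of Isaacs' book): $M(\mathsf{A}_5)\cong \mathsf{C}_2$, $M(\PSL(2,8))=1$, and $M(\Sz(8))\cong \mathsf{C}_2\times\mathsf{C}_2$. The point common to all three cases is that $M(S)$ is a $2$-group; hence by the previous paragraph $Z(G)$ is a $2$-group. But by hypothesis $Z(G)=O_{2'}(G)$, so $|Z(G)|$ is odd. A $2$-group of odd order is trivial, so $Z(G)=1$, contradicting the assumption $O_{2'}(G)=Z(G)>1$. (For $S=\PSL(2,8)$ the contradiction is already immediate, since then $Z(G)$ must be trivial outright.)

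There is no real obstacle: the argument is a short arithmetic incompatibility between "$|Z(G)|$ divides the order of a $2$-group" and "$|Z(G)|$ is odd." The only point requiring a little care is to cite the correct fact — that the center of a quasisimple group is a quotient of the Schur multiplier of its simple quotient, equivalently that $G$ is a quotient of the universal central extension of $S$ — together with the correct values of $M(\mathsf{A}_5)$, $M(\PSL(2,8))$, and $M(\Sz(8))$.
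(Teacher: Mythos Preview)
Your proposal is correct and follows essentially the same argument as the paper: use that $|Z(G)|$ divides $|M(S)|$ for the simple quotient $S$, observe that all three Schur multipliers in question are $2$-groups, and obtain a contradiction with $Z(G)=O_{2'}(G)$ having odd order. The only difference is that you spell out why $|Z(G)|$ divides $|M(S)|$ via the universal central extension, which the paper simply asserts.
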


We need to introduce more notation to deal with the remaining case.  For any group $G$, we define $o(G)=\{o(g)|g \in G \setminus \{1\}\}$. Suppose that $f(G)\leq 3$  and let $\chi \in \Irr(G)$ be a non-rational character. Then $\Q(\chi)=\Q(\chi(g))$ for some $g \in G \setminus \{1\}$. Thus, $\Q(\chi)$ is a quadratic extension or a cubic extension of $\Q_{n}$, where $n = o(g)$. If $N$ is a normal subgroup of $G$, then we write $\Irr(G|N)$ to denote the set of $\chi \in \Irr(G)$ such that $N \not \leq \ker(\chi)$. Finally,  if $N$ is a normal subgroup of $G$ and $\theta \in \Irr(N)$, then we write $I_{G}(\theta)=\{g \in G|\theta^{g}=\theta\}$ to denote the inertia subgroup of $\theta$ in $G$.

\begin{thm}\label{other}
There is no  group $G$ with $f(G)\leq 3$ such that $G/O_{2'}(G) \in \{\mathsf{A}_{5},\\ \PSL(2,8), \Sz(8)\}$, $O_{2'}(G)$ is elementary abelian and a $G/O_{2'}(G)$-simple module.
\begin{proof}
Write $V=O_{2'}(G)$  and let $|V|=p^d$ with $p>2$. Thus, if $\F_{p}$ is the field of $p$ elements, then $V$ can be viewed as an irreducible $\F_{p}[G/V]$-module of dimension $d$. We can extend the associated representation  to a representation  of $G/V$ over an algebraically closed field in characteristic $p$. Thus, the representation given by $V$ can be expressed as a sum of irreducible representations of $G/V$ over an algebraically closed field in characteristic $p$. Let $m(S)$ be the smallest degree of a non-linear $p$-Brauer character of $S$.  We have that $d \geq m(G/V)$.  We have to distinguish two different cases:  $p$ divides $|G/V|$ and $p$ does not divide $|G/V|$.

\underline{Case $p$ does not divide $|G/V|$:} In this case the Brauer characters are  the ordinary characters. Thus,  $|V|=p^{d}$ where $d$ is at least the smallest degree of an irreducible non-trivial character of $G/V$. Now, let $\lambda \in \Irr(V)\setminus \{1\}$. Then $\Q(\lambda)\subseteq \Q_{p}$. Since $(|G/V|,|V|)=1$,  we have that $(|I_{G}(\lambda)/V|,|V|)=1$. Thus, by Lemma \ref{exten}, we have that $\lambda$ has an extension $\psi \in \Irr(I_{G}(\lambda))$ with $\Q(\psi)=\Q(\lambda)\subseteq \Q_{p}$. By the Clifford's correspondence  (See Theorem 6.11 of \cite{Isaacscar}) $\psi^{G} \in \Irr(G)$ and $\Q(\psi^{G})\subseteq \Q(\psi) \subseteq \Q_{p}$. Thus, given $\zeta$,  an  orbit of $G/V$ on $\Irr(V)\setminus \{1_{V}\}$, there exists $\chi_{\zeta} \in \Irr(G|V)$ such that $\Q(\chi_{\zeta})\subseteq \Q_{p}$. Let $F$ be the unique quadratic extension of $\Q_{p}$ and let $T$ be the unique cubic extension of $\Q_{p}$ (if such an extension exists). Since $\Irr(G/V)$ contains three rational characters, we deduce that $\Q(\chi_{\zeta})\in \{T,F\}$ and since $F$ is quadratic, then there are at most $2$ characters whose field of values is $F$. Thus, the action of $G/V$ on $\Irr(V)\setminus \{1_{V}\}$ has at most $5$ orbits. Therefore, $|V|=|\Irr(V)|\leq 5|G/V|+1$.

\begin{itemize}
\item[(i)] Case $G/V=\mathsf{A}_{5}$: In this case $|V|\geq 7^3=343$ (because $7$ is the smallest prime not dividing $|G/V|$ and $3$ is the smallest degree of a non-linear character of $\mathsf{A}_{5}$).  On the other hand, we have that  $|V|\leq 5|G/V|+1\leq 5\cdot 60+1=301<343$, which is a contradiction.

\item[(ii)] Case $G/V=\PSL(2,8)$:  In this case  $|V|\geq 5^{7}=78125$ and $|V|\leq 5\cdot504+1=2521$, which is a contradiction.

\item[(iii)]  Case $G/V=\Sz(8)$: In this case $|V|\geq 3^{14}=4782969$ and $|V|\leq 5\cdot 29120+1=145601$, which is a contradiction.
\end{itemize}

\underline{Case $p$ divides $G/V$:} From the Brauer character tables of $\{\mathsf{A}_{5},\PSL(2,8),\\ \Sz(8)\}$, we deduce that  $m(\mathsf{A}_{5})=3$ for $p \in \{3,5\}$,  $m(\PSL(2,8))=7$ for $p \in \{3,7\}$ and $m(\Sz(8))=14$ for $p \in \{3,7,13\}$.

\begin{itemize}
    \item [(i)] Case  $G/V=\PSL(2,8)$:
    \begin{itemize}
        \item [a)] $p=7$: In this case  $|V|=7^{d}$ with $d\geq 7$ and $o(G)=\{2,3,7,9,2\cdot 7, 3\cdot 7, 7 \cdot 7, 9 \cdot 7\}$. On the one hand,  the number of non-trivial  $G$-conjugacy classes contained in $V$ is at least $\frac{|V|}{|G/V|}\geq \frac{7^{7}}{504}\geq 1634$. Therefore, we deduce that $|\Irr(G)|\geq 1634$. On the other hand,   we have that there are at most $3$ quadratic extensions and at most $4$ cubic extensions contained in $\Q_{n}$, where $n \in o(G)$. Applying again that $f(G)\leq 3$, we have that the number of non-rational characters in $G$ is at most $2\cdot3+3\cdot 4=18$. Counting the rational characters, we have that $|\Irr(G)|\leq 21<1634$, which is a contradiction.

\item [b)] $p=3$: In this case $|V|=3^{d}$ with $d\geq 7$ and by calculation $k(G)=|\Irr(G)|\leq 3+2\cdot 3+3\cdot 2=15$.  We know that $V=S(G)$, and hence if $4\leq \alpha(G)\leq 9$, then $f(G)>3$ by Theorem \ref{Vera-Lopez2} (clearly $\alpha(G)\geq 4$ because $k(G/S(G))=9$). Thus,  $\alpha(G)\geq 10$. Since $V=S(G)$ and $k(G)\leq 15$, we deduce that $V$ contains at most $4$ non-trivial $G$-conjugacy classes. Thus, $|V|\leq 504\cdot 4+1=2017<3^{7}$ and hence we have a contradiction.
    \end{itemize}

\item [(ii)] Case $G/V=\Sz(8)$: In this case $|V|\geq 5^{14}$ and as before $|\Irr(G)|\geq 209598$.
    \begin{itemize}
        \item [a)] $p=5$: By calculation, $|\Irr(G)|\leq 3 +2 \cdot 7+3\cdot 2=23<209598$, which is a contradiction.

\item [b)] $p\in \{7,13\}$: By calculation, $|\Irr(G)|\leq 3+2\cdot 7+3\cdot 4 =29<209598$, which is a contradiction.
    \end{itemize}

\item [(iii)] Case $G/V=\mathsf{A}_{5}$:
   \begin{itemize}
        \item [a)] $p=3$: In this case $|V|=3^d$, where $d\geq 3$ and by calculation we have that, $|\Irr(G)|\leq 3+ 2\cdot 3+3 \cdot 1 =12$. As before, applying Theorem \ref{Vera-Lopez2}, we can deduce that $|V|$ contains at most one non-trivial $G$-conjugacy class. Thus, $|V|\leq 61$ and since $V$ is a 3-group we deduce that $|V|= 3^3$.  We also deduce that $26$ is the size of a $G$-conjugacy class.  That is impossible since 26 does not divide $|G/V|=60$. 

\item [b)] $p=5$: In this case $k(G)\leq 9$ and by Theorem \ref{Vera-Lopez} there is no group with the required properties.
    \end{itemize}

\end{itemize}

We conclude that there is no group with the desired form and hence $V=1$.
\end{proof}
\end{thm}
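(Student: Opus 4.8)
The plan is to confront a lower bound on $|O_{2'}(G)|$ coming from modular representation theory with an upper bound on $|\Irr(G)|$ forced by $f(G)\le 3$. Write $V=O_{2'}(G)$, $S=G/V\in\{\mathsf A_{5},\PSL(2,8),\Sz(8)\}$, and $|V|=p^{d}$ with $p$ an odd prime; by hypothesis $V$ is an irreducible $\F_{p}[S]$-module. Over an algebraic closure $V$ becomes a sum of irreducible Brauer modules of $S$, and since $S$ is perfect every $1$-dimensional constituent is trivial; hence either $d=1$, in which case $V=Z(G)$ and $G\cong S\times\mathsf C_{p}$, contradicting $f(S\times\mathsf C_{p})>3$, or $d\ge m(S)$, where $m(S)$ is the least degree of a non-linear irreducible $p$-Brauer character of $S$ (the least nontrivial ordinary degree when $p\nmid|S|$). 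By Theorem \ref{2racional}, $G$ has exactly three rational irreducible characters. I would then split into the cases $p\nmid|S|$ and $p\mid|S|$.

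In the \emph{coprime case} $p\nmid|S|$ one has $|V|\ge 7^{3},5^{7},3^{14}$ for $S=\mathsf A_{5},\PSL(2,8),\Sz(8)$. For the upper bound, for each $\lambda\in\Irr(V)\setminus\{1_{V}\}$ the coprimality $(|I_{G}(\lambda)/V|,|V|)=1$ together with Lemma \ref{exten} gives an extension $\psi\in\Irr(I_{G}(\lambda))$ with $\Q(\psi)=\Q(\lambda)\subseteq\Q_{p}$, and Clifford correspondence produces $\psi^{G}\in\Irr(G|V)$ with $\Q(\psi^{G})\subseteq\Q_{p}$. Distinct $G$-orbits on $\Irr(V)\setminus\{1_{V}\}$ give distinct such characters, none of them rational (else a fourth rational character), so since $\Q_{p}$ has a unique quadratic subfield and at most one cubic subfield, $f(G)\le 3$ leaves at most $2+3=5$ orbits; hence $|V|=|\Irr(V)|\le 5|S|+1$, which is false since $5\cdot 60+1<7^{3}$, $5\cdot 504+1<5^{7}$ and $5\cdot 29120+1<3^{14}$.

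In the \emph{non-coprime case} the pairs $(S,p)$ are $(\mathsf A_{5},3),(\mathsf A_{5},5),(\PSL(2,8),3),(\PSL(2,8),7),(\Sz(8),5),(\Sz(8),7),(\Sz(8),13)$, with $m(\mathsf A_{5})=3$, $m(\PSL(2,8))=7$, $m(\Sz(8))=14$ read off from the $p$-modular tables. I would bound $k(G)=|\Irr(G)|$ both ways: from below, $V=S(G)$ contains at least $(p^{m(S)}-1)/|S|$ non-identity $G$-classes; from above, every element order lies in the explicitly computable set $o(G)$ (each such order involving at most two primes, since element orders in $S$ are prime powers), so elementary Galois theory bounds the number of quadratic (resp. cubic) subfields of $\Q_{n}$ by $3$ (resp. $4$) for each $n\in o(G)$, and $f(G)\le 3$ then forces $k(G)\le 3+2\cdot 3+3\cdot 4=21$ (less in the $p=5$ cases). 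For $(\PSL(2,8),7)$ and $(\Sz(8),p\in\{5,7,13\})$ the two bounds are grossly incompatible. In the leftover cases $(\PSL(2,8),3),(\mathsf A_{5},3),(\mathsf A_{5},5)$, where the count only gives $k(G)\le 15,12,9$, one uses Theorem \ref{Vera-Lopez} directly for $(\mathsf A_{5},5)$, and for the two $p=3$ cases, since $V=S(G)$ and $k(G/S(G))\le 10$, Theorem \ref{Vera-Lopez2} forces $\alpha(G)\ge 10$, so $V$ contains very few non-identity $G$-classes, whence $|V|\le 4|S|+1<3^{7}$ for $\PSL(2,8)$ and $|V|\le|S|+1$ for $\mathsf A_{5}$, forcing $|V|=3^{3}$ and a conjugacy class of size $26\nmid 60$. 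Thus no such $G$ exists and $O_{2'}(G)=1$.

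The bookkeeping up to the ``large prime'' non-coprime cases is routine because the numerics are hopelessly lopsided; the real obstacle is concentrated in $p=3$ with $S\in\{\mathsf A_{5},\PSL(2,8)\}$, where the character count alone is too weak and one must bring in the structural information $V=S(G)$, $k(G/S(G))\le 10$ and the resulting control on $\alpha(G)$ via Theorem \ref{Vera-Lopez2}, together with the divisibility restriction on class sizes. Assembling the correct sets $o(G)$ and the Brauer-degree data $m(S)$ for each relevant prime is the secondary delicate point.
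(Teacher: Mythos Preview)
Your proposal is correct and matches the paper's proof essentially step for step: the same coprime/non-coprime split, the same orbit-count bound $|V|\le 5|S|+1$ via Lemma~\ref{exten} and Clifford correspondence in the coprime case, the same confrontation of the class-count lower bound $(p^{m(S)}-1)/|S|$ against a cyclotomic-subfield upper bound on $k(G)$ in the non-coprime case, and the same appeal to Theorems~\ref{Vera-Lopez} and~\ref{Vera-Lopez2} (plus the class-size divisibility obstruction $26\nmid 60$) for the residual cases $(\PSL(2,8),3)$, $(\mathsf{A}_5,3)$, $(\mathsf{A}_5,5)$. Your blanket estimate $k(G)\le 3+2\cdot 3+3\cdot 4=21$ slightly understates the paper's case-by-case counts for $\Sz(8)$ (which reach $29$, since the several primes in $o(G)$ contribute distinct quadratic subfields rather than a common set of three), but this is harmless in those cases where the lower bound already exceeds $10^{5}$.
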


Now, we are prepared to prove of Theorem \ref{nonsolvable}

\begin{proof}[Proof of Theorem \ref{nonsolvable}]
By Theorem \ref{reduction}, we know that $G/O_{2'}(G) \in \{\mathsf{A}_{5},\PSL(2,8), \\\Sz(8)\}$. We want to prove that $O_{2'}(G)=1$. Suppose that $O_{2'}(G)>1$. Taking an appropriate quotient, we may assume that $O_{2'}(G)$ is a minimal normal subgroup of $G$. Since $O_{2'}(G)$ is solvable, we have that $O_{2'}(G)$ is a $p$-elementary abelian subgroup for some odd prime $p$. There are two possibilities for $O_{2'}(G)$. The first one is that   $O_{2'}=Z(G)$, and the second one is that $O_{2'}(G)$ is a $G/O_{2'}(G)$-simple module. The first one is impossible by Theorem \ref{quasisimple} and the second one is impossible by Theorem \ref{other}. Thus, $O_{2'}(G)=1$ and the result follows.
\end{proof}

Therefore, the only non-solvable groups with $f(G)\leq 3$ are $\mathsf{A}_{5},\PSL(2,8)$ and $\Sz(8)$. In the remaining we will assume that $G$ is solvable.

\section{Metabelian case}\label{Section5}

Let $G$ be a finite metabelian group with $f(G)\leq 3$. By Corollary \ref{der}, we have that $G/G' \in \{\mathsf{C}_{2},\mathsf{C}_{3},\mathsf{C}_{4}\}$ and hence we can divide  this case  in different subcases. We begin by studying the case when $G'$ is $p$-elementary abelian.

\begin{lem}\label{casopelem}
Let $G$ be a  finite group such that $f(G)\leq 3$ and $G'\not=1$ is $p$-elementary abelian. Then $G \in \{\mathsf{S}_{3},\mathsf{D}_{10},\mathsf{A}_{4},\mathsf{D}_{14},\mathsf{F}_{21},\mathsf{F}_{20},\mathsf{F}_{52}\}$.
\begin{proof}
First, we observe that $(|G:G'|,p)=1$. Otherwise, $G$ would be a nilpotent group with $f(G)\leq 3$. Thus, by Theorem \ref{nilpotent}, we would have  that $G'=1$, which is impossible.

Let $\psi \in \Irr(G')\setminus \{1_{G'}\}$ and let $I_{G}(\psi)$ be the inertia group of $\psi$ in $G$. Since $G/G'$ is cyclic, applying Theorem 11.22 of \cite{Isaacscar}, we have that $\psi$ can be extended to an irreducible character of $I_{G}(\psi)$. Since $\psi$ cannot be extended to $G$,  we have that $\psi$ cannot be invariant and hence $I_{G}(\psi)<G$. Now, we will study separately the case $G/G' \in \{\mathsf{C}_{2},\mathsf{C}_{3}\}$ and the case $G/G'=\mathsf{C}_{4}$.

Assume first that $G/G' \in \{\mathsf{C}_{2},\mathsf{C}_{3}\}$. Since $ I_{G}(\psi)< G$, we deuce that $I_{G}(\psi)=G'$ for every $\psi \in \Irr(G')\setminus \{1_{G'}\}$. Thus,  by Clifford correspondence, $\psi^G\in \Irr(G)$.

Therefore, if $\chi \in \Irr(G|G')$, then $\chi$  has the form $\chi=\psi^{G}$, where $\psi \in \Irr(G')\setminus \{1_{G'}\}$.  Since  $\mathbb{Q}(\psi)\subseteq \mathbb{Q}_{p}$, we have that $\mathbb{Q}(\psi^{G})\subseteq \mathbb{Q}_{p}$. We know that there exists at most one quadratic extension in $\mathbb{Q}_{p}$ and at most one cubic extension in $\mathbb{Q}_{p}$.  Since $\Irr(G/G')$ contains at least one rational character and $f(G)\leq 3$, we have that $|\Irr(G|G')|\leq 2+1\cdot 2+ 1\cdot 3=7$. Since $|\Irr(G/G')|\leq 3$, we have that $k(G)=|\Irr(G)| = |\Irr(G|G')|+|\Irr(G/G')|\leq 7+3=10$.  By Theorem \ref{Vera-Lopez}, we deduce that the only groups such that $|G:G'|\in \{2,3\}$, $G'$ is elementary abelian, $f(G)\leq 3$ and $k(G)\leq 10$ are $\{\mathsf{S}_{3},\mathsf{D}_{10},\mathsf{A}_{4},\mathsf{D}_{14},\mathsf{F}_{21}\}$.

Assume now that $G/G'=\mathsf{C}_{4}$. If $\psi \in \Irr(G')\setminus \{1_{G'}\}$, we have that $I_{G}(\psi)<G$ and hence we have two possible options.

 The first one is that $I_{G}(\psi)=G'$. In this case, applying the Clifford correspondence, we have that $\psi^{G}\in \Irr(G)$ and hence $\mathbb{Q}(\psi^{G})\subseteq \Q(\psi)\subseteq \mathbb{Q}_{p}$.  The other one is that $|G:I_{G}(\psi)|=2$. In this case, applying Lemma \ref{exten}, we have that $\psi $ is extendible to $\varphi \in \Irr(I_{G}(\psi))$ and $\Q(\varphi)=\Q(\psi)\subseteq \Q_{p}$.  Let $\Irr(I_{G}(\psi)/G')=\{1,\rho\}$. By Gallagher's Theorem, $\varphi$ and $\varphi\rho$ are all the extensions of $\psi$ to $I_{G}(\psi)$. Since $\Q(\rho)=\Q$, we have that $\Q(\varphi\rho)=\Q(\varphi)\subseteq \Q_{p}$. Let $\tau \in \{\varphi,\varphi\rho\}$. We  have that $\tau^{G} \in \Irr(G)$, and hence $\Q(\tau^{G})\subseteq \Q(\tau)\subseteq \Q_{p}$. Therefore, $\Q(\chi)\subseteq \Q_{p}$ for every  $\chi \in \Irr(G|G')$.

As before, we can deduce that $ \Irr(G|G')$ contains at most $5$ non-rational characters. On the other hand, $\Irr(G/G')$ contains two rational characters and hence $\Irr(G|G')$ contains at most one rational character. Therefore, $|\Irr(G|G')|\leq 6$ and hence $k(G)=|\Irr(G/G')|+|\Irr(G|G')|\leq 4+6=10$. By Theorem \ref{Vera-Lopez}, our only possible options are  $\{\mathsf{F}_{20},\mathsf{F}_{52}\}$.
\end{proof}
\end{lem}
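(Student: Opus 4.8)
The plan is to bound the class number $k(G)$ by $10$ and then read off the conclusion from Theorem~\ref{Vera-Lopez}. Since $\Irr(G)$ splits as the disjoint union of $\Irr(G/G')$ and $\Irr(G|G')$, with $|\Irr(G/G')|=|G:G'|\le 4$ by Corollary~\ref{der}, everything reduces to bounding $|\Irr(G|G')|$; I would do this by showing that every $\chi\in\Irr(G|G')$ has field of values inside $\Q_{p}$ and then invoking the cyclotomic lemma together with $f(G)\le 3$.

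Two preliminary observations are needed. First, $(|G:G'|,p)=1$: otherwise $G$, being an extension of a $p$-group by a $2$- or $3$-group, would itself be a $p$-group, hence nilpotent with $f(G)\le 3$, forcing $G\in\{\mathsf{C}_2,\mathsf{C}_3,\mathsf{C}_4\}$ by Theorem~\ref{nilpotent}, against $G'\ne 1$. Second, no nontrivial $\psi\in\Irr(G')$ is $G$-invariant: $G'$ is abelian, so $\psi$ is linear, and if $\psi$ were invariant then, $G/G'$ being cyclic, it would extend to some $\chi\in\Irr(G)$ with $\chi(1)=1$; but then $\chi$ is linear with $G'\le\ker\chi$, whence $\psi=\chi_{G'}=1_{G'}$, a contradiction. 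Thus $I_{G}(\psi)<G$ for every nontrivial $\psi\in\Irr(G')$.

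Now I would split into the cases $G/G'\in\{\mathsf{C}_2,\mathsf{C}_3\}$ and $G/G'=\mathsf{C}_4$. In the first case $I_{G}(\psi)<G$ forces $I_{G}(\psi)=G'$, so Clifford correspondence shows every $\chi\in\Irr(G|G')$ is of the form $\psi^{G}$ with $\psi\in\Irr(G')\setminus\{1_{G'}\}$, and since $\Q(\psi)\subseteq\Q_{p}$ and induction does not enlarge the field of values we get $\Q(\chi)\subseteq\Q_{p}$. As $\Q_{p}$ contains at most one quadratic and at most one cubic subextension, $f(G)\le 3$ allows at most $2$ characters in $\Irr(G|G')$ with a quadratic field of values and at most $3$ with a cubic one; since the trivial character already accounts for a rational character outside $\Irr(G|G')$ and $G$ has at most $3$ rational characters, $\Irr(G|G')$ contains at most $2$ rational characters. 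Hence $|\Irr(G|G')|\le 2+2+3=7$ and $k(G)\le 3+7=10$. In the case $G/G'=\mathsf{C}_4$, $\psi$ has inertia index $2$ or $4$: if $4$ we argue as above; if $2$ we use Lemma~\ref{exten} (applicable since $o(\psi)\psi(1)$ is a power of $p$, coprime to $2$) to extend $\psi$ to $\varphi\in\Irr(I_{G}(\psi))$ with $\Q(\varphi)=\Q(\psi)\subseteq\Q_{p}$, and Gallagher's theorem shows the second extension $\varphi\rho$, with $\rho$ the nontrivial (rational) character of $I_{G}(\psi)/G'$, also has field of values in $\Q_{p}$; inducing, once more $\Q(\chi)\subseteq\Q_{p}$ for all $\chi\in\Irr(G|G')$. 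Here $\Irr(G/G')$ already contains two rational characters, so $\Irr(G|G')$ contains at most one, giving $|\Irr(G|G')|\le 5+1=6$ and $k(G)\le 4+6=10$. In either case Theorem~\ref{Vera-Lopez} leaves finitely many candidates, and imposing the structural constraints ($G'\ne 1$ elementary abelian, $G/G'$ cyclic of the right order, $f(G)\le 3$) isolates $\{\mathsf{S}_3,\mathsf{D}_{10},\mathsf{A}_4,\mathsf{D}_{14},\mathsf{F}_{21}\}$ in the prime-index case and $\{\mathsf{F}_{20},\mathsf{F}_{52}\}$ in the index-$4$ case.

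The bookkeeping above is routine; the point requiring attention is the reduction to the inertia group and the claim that induction and the extensions involved keep the field of values inside $\Q_{p}$ — and, in the $\mathsf{C}_4$ case, the tacit use that the non-rational characters of $G/G'$ have field of values $\Q(i)\not\subseteq\Q_{p}$ (as $p$ is odd), so that they genuinely consume part of the character budget rather than duplicating members of $\Irr(G|G')$. Once $k(G)\le 10$ is established, the remainder is a finite verification against the classification in \cite{VeraLopez}.
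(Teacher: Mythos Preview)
Your proof is correct and follows essentially the same approach as the paper: show $\Q(\chi)\subseteq\Q_p$ for every $\chi\in\Irr(G|G')$ via Clifford theory (using that no nontrivial $\psi\in\Irr(G')$ is $G$-invariant and, in the $\mathsf{C}_4$ case, Lemma~\ref{exten} plus Gallagher), count against the subfields of $\Q_p$ to get $k(G)\le 10$, and finish with Theorem~\ref{Vera-Lopez}. Your direct argument for non-invariance (a linear extension would have $G'$ in its kernel) is a mild rephrasing of the paper's appeal to Isaacs' Theorem~11.22, and your closing remark that $\Q(i)\not\subseteq\Q_p$ is correct but not actually needed for the bound.
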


\begin{thm}\label{caso2ab}
Let $G$ be a  metabelian group with $f(G)\leq 3$ such that $|G:G'|=2$. Then $G \in \{\mathsf{S}_{3},\mathsf{D}_{10},\mathsf{D}_{14},\mathsf{D}_{18}\}$.
\begin{proof}
 Assume for the moment that $G'$ is a $p$-group. We note  that $F(G)=G'$. Therefore, $G'/\Phi(G)=F(G)/\Phi(G)$ is $p$-elementary abelian. Thus, by Lemma \ref{casopelem}, we have that $G/\Phi(G) \in \{\mathsf{S}_{3},\mathsf{D}_{10},\mathsf{D}_{14}\}$ and hence $G'/\Phi(G)$ is cyclic. Therefore, $G'$ is a cyclic $p$-group and and we have only three possibilities for $p$. We  analyse the cases $p=3$, $p=5$ and $p=7$ separately.

If $p=3$, then  $G'$ is a cyclic group of order $3^{l}$. If $l \geq 3$, then there exists $K$ characteristic in $G'$  of order $3^{l-3}$. Thus, $|G/K|=2\cdot3^{3}=54$ and $f(G/K)\leq 3$. However, by Lemma \ref{casos}, there is no group of order $54$ with $f(G)\leq 3$. Thus, $l\in\{1,2\}$. If $l=1$, then $G=\mathsf{S}_{3}$ and if $l=2$, then  $G=\mathsf{D}_{18}$.

 If $p \in \{5,7\}$, then $G'$ is a cyclic group of order $p^{l}$. If $l \geq 2$, then there exists $K$ characteristic in $G'$  of order $p^{l-2}$. Thus, $|G/K|=2\cdot p^{2}$ and $f(G/K)\leq 3$. For $p=5$, we have that $|G/K|=2\cdot 5^{2}=50$ and for $p=7$, we have  that $|G/K|=2\cdot 7^{2}=98$. However, by Lemma \ref{casos}, there is no group of order $50$ or $98$ with $f(G)\leq3$.

Therefore, if $G'$ is a $p$-group, then $G \in \{\mathsf{S}_{3},\mathsf{D}_{18},\mathsf{D}_{10},\mathsf{D}_{14}\}$. From here, we also deduce that  the prime divisors of $|G'|$ are contained in $\{3,5,7\}$. To complete the classification it only remains to prove that $|G'|$ cannot be divided by two different primes. Suppose that both $3$ and $5$ divide $|G'|$. Taking  a quotient by a Sylow $7$-subgroup of $G'$, we may assume that the only prime divisors of $|G'|$ are $3$ and $5$. By the case when $|G'|$ is a $p$-group, we deduce that the Sylow $3$-subgroups and Sylow $5$-subgroups of $G'$ are both cyclic. Thus, $f(G/\Phi(G))\leq 3$  and $G'/\Phi(G)=\mathsf{C}_{3}\times \mathsf{C}_{5}$. Therefore, $G/\Phi(G)$ is a group of order $30$ with $f(G/\Phi(G))\leq 3$, which is impossible by Lemma \ref{casos}. Analogously, we can prove that if any of the pairs $\{3,7\}$ or $\{5,7\}$ divides $|G'|$ at the same time, then there exists  a group $H$ with $f(H)\leq 3$ of order $42$ or $70$, respectively. Applying again Lemma \ref{casos}, we have a contradiction. Thus, $G'$ is a $p$-group and the result follows.
\end{proof}
\end{thm}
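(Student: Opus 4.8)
The plan is to reduce first to the case where $G'$ is a $p$-group, then to pin down $G/\Phi(G)$ by invoking Lemma~\ref{casopelem}, deduce that $G'$ is cyclic, and finally bound $|G'|$ by passing to the small orders excluded in Lemma~\ref{casos}. For the reduction: since $G$ is metabelian, $G'$ is abelian, hence the direct product of its Sylow subgroups $\mathbf{O}_q(G')$, each characteristic in $G'$ and so normal in $G$. For $p\mid|G'|$ the quotient $\bar G=G/\prod_{q\ne p}\mathbf{O}_q(G')$ is again metabelian with $f(\bar G)\le f(G)\le 3$, $\bar G'\cong\mathbf{O}_p(G')$ a $p$-group and $|\bar G:\bar G'|=2$; moreover $p$ is odd, since otherwise $\bar G$ would be a nilpotent $2$-group, impossible by Theorem~\ref{nilpotent} as $\bar G\ne1$ is non-nilpotent. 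So it suffices to treat $G'$ a $p$-group with $p$ odd and then reassemble.

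\emph{The case $G'$ a $p$-group.} Here $F(G)=G'$ (else $F(G)=G$ and $G$ is nilpotent), so $\Phi(G)\le G'$ and $G'/\Phi(G)=F(G)/\Phi(G)$ is $p$-elementary abelian. The key observation is that the nontrivial coset of $G'$ acts on $G'$ by inversion: letting $\phi$ be the automorphism of $G'$ induced by conjugation by any element outside $G'$ (well defined since $G'$ is abelian), one has $\phi^2=1$, and $G'=[G,G]$ is generated by $\{g^{-1}g^{\phi}:g\in G'\}$, i.e. equals the image of $\phi-1$ on $G'$; surjectivity of $\phi-1$ forces $C_{G'}(\phi)=1$, and as $|G'|$ is odd this means $\phi$ is inversion. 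Consequently every subgroup of $G'$ is normal in $G$, and comparing the maximal subgroups of $G$ (namely $G'$ together with $B\langle t\rangle$, $B$ maximal in $G'$ and $\langle t\rangle$ a complement supplied by Schur--Zassenhaus) with those of $G'$ gives $\Phi(G)=\Phi(G')$. Applying Lemma~\ref{casopelem} to $G/\Phi(G)$ — which has $f\le 3$ and nontrivial $p$-elementary abelian commutator subgroup of index $2$ — leaves only $G/\Phi(G)\in\{\mathsf{S}_3,\mathsf{D}_{10},\mathsf{D}_{14}\}$, these being the members of that list with commutator quotient $\mathsf{C}_2$. Hence $G'/\Phi(G')=G'/\Phi(G)$ is cyclic of prime order $p\in\{3,5,7\}$, so $G'$ is cyclic.

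\emph{Bounding $|G'|$.} Write $G'=\mathsf{C}_{p^l}$. If $p=3$ and $l\ge 3$, the characteristic subgroup of order $3^{l-3}$ produces a quotient of order $54$ with $f\le 3$, impossible by Lemma~\ref{casos}; so $l\le 2$ and $G\in\{\mathsf{S}_3,\mathsf{D}_{18}\}$. If $p\in\{5,7\}$ and $l\ge 2$, the subgroup of order $p^{l-2}$ gives a quotient of order $50$ or $98$, again impossible; so $l=1$ and $G\in\{\mathsf{D}_{10},\mathsf{D}_{14}\}$. For general $G'$: the above shows that each Sylow subgroup of $G'$ is cyclic of order dividing $9$, $5$ or $7$ and that no prime outside $\{3,5,7\}$ divides $|G'|$; if two such primes $q,r$ both divided $|G'|$, then factoring out the third Sylow and then (by the same inversion argument) $\Phi(G)=\Phi(G')$ would yield a group of order $2qr\in\{30,42,70\}$ with $f\le 3$, contradicting Lemma~\ref{casos}. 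Hence $G'$ is a prime power and the theorem follows.

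\emph{Main obstacle.} The delicate step is the identification of $G/\Phi(G)$: one must notice that the hypotheses $|G:G'|=2$ and $G'=F(G)$ force the conjugation action on $G'$ to be inversion, which is exactly what makes $\Phi(G)=\Phi(G')$ and thereby upgrades ``$G'/\Phi(G)$ cyclic'' to ``$G'$ cyclic''. Without it, a non-cyclic $G'$ with cyclic Frattini quotient relative to $\Phi(G)$ is possible in general (e.g. a maximal elementary abelian subgroup of an extraspecial $p$-group), so the $\mathsf{C}_2$-hypothesis really has to be used. Everything else is bookkeeping with Lemmas~\ref{casopelem} and~\ref{casos} and Theorem~\ref{Vera-Lopez}.
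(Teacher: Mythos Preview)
Your proof is correct and follows the same architecture as the paper's: reduce to $G'$ a $p$-group, apply Lemma~\ref{casopelem} to $G/\Phi(G)$ to force $p\in\{3,5,7\}$ and $G'$ cyclic, bound $|G'|$ via the forbidden orders $54,50,98$ in Lemma~\ref{casos}, and then exclude two distinct prime divisors via orders $30,42,70$. The one substantive addition is your inversion argument: you prove that the nontrivial coset of $G'$ acts by $x\mapsto x^{-1}$ (from surjectivity of $\phi-1$ on the abelian group $G'$), whence every subgroup of $G'$ is normal in $G$ and $\Phi(G)=\Phi(G')$; this is exactly what is needed to pass from ``$G'/\Phi(G)$ cyclic'' to ``$G'$ cyclic'', a step the paper asserts without comment. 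So your proof is not a different route but a more carefully justified version of the same one. One small wording slip: in the reduction to odd $p$ you write ``impossible by Theorem~\ref{nilpotent} as $\bar G\ne1$ is non-nilpotent'', which is circular as stated; what you mean is that if $p=2$ then $\bar G$ is a $2$-group, hence nilpotent with $f(\bar G)\le3$, so $\bar G\in\{\mathsf{C}_2,\mathsf{C}_4\}$ by Theorem~\ref{nilpotent}, contradicting $\bar G'\cong\mathbf{O}_2(G')\ne1$. The reference to Theorem~\ref{Vera-Lopez} at the end is unnecessary---your argument uses only Lemmas~\ref{casopelem} and~\ref{casos} and Theorem~\ref{nilpotent}.
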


\begin{thm}\label{caso3ab}
Let $G$ be a  metabelian group with $f(G)\leq 3$ such that   $|G:G'|=3$. Then $G \in \{\mathsf{A}_{4},\mathsf{F}_{21}\}$.
\begin{proof}
As in Theorem \ref{caso2ab}, we  assume first that $G'$ is a $p$-group. By Proposition \ref{casopelem}, we have that $G/\Phi(G) \in \{\mathsf{A}_{4},\mathsf{F}_{21}\}$.  Therefore, we have that $p\in \{2,7\}$. We analyse each case separately.

If $p=7$, then $G'/\Phi(G)=\mathsf{C}_{7}$. Thus, $G'$ is a cyclic group of order $7^{l}$. If $l \geq 2$, then there exists $K$ characteristic in $G'$  of order $7^{l-2}$. Thus, $|G/K|=3\cdot7^{2}=147$ and $f(G/K)\leq 3$. However, by Lemma \ref{casos}, there is no group of order $147$ with $f(G)\leq 3$. Thus, $l=1$ and hence $G= \mathsf{F}_{21}$.

If $p=2$, then $G'/\Phi(G)=\mathsf{C}_{2}\times \mathsf{C}_{2}$. Thus, $G'=U\times V$, where $U$ is cyclic of order $2^n$, $V$  is cyclic of order $2^m$ and $n\geq m$ .Then, we can take $H$ the unique subgroup of $U$ of order $2^{m}$.  Thus, $K=H\times V$ is normal in $G$ and $(G/K)'$ is a cyclic 2-group. Thus, $f(G/K)\leq 3$, $|G/K:(G/K)'|=3$ and $(G/K)'$ is a cyclic $2$-group, which is not possible by Proposition \ref{casopelem}. It follows that $n=m$ and hence $G'$ is a product of $2$ cycles of length $n$. If $n \geq 2$, then there exists $T$ characteristic in $G'$ such that $G'/T=\mathsf{C}_{4}\times \mathsf{C}_{4}$. Thus, $f(G/T)\leq 3$  and $|G/T|=48$, which contradicts Lemma \ref{casos}. It follows that $n=1$ and hence $G=\mathsf{A}_{4}$.

Therefore, we have that the  prime divisors of $G'$ are contained in $\{2,7\}$ and if $G'$ is a $p$-group, then $G \in  \{\mathsf{A}_{4},\mathsf{F}_{21}\}$. Assume now that both $2$ and $7$  divide $|G'|$. Then $G'/\Phi(G)=\mathsf{C}_{2}\times \mathsf{C}_{2}\times \mathsf{C}_{7}$. Thus, $|G/\Phi(G)|=84$ and $f(G/\Phi(G))\leq 3$, which is impossible by Lemma \ref{casos}. Then $G'$ must be a $p$-group and the result follows.
\end{proof}
\end{thm}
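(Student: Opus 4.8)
The plan is to mirror exactly the strategy used in Theorem~\ref{caso2ab}, passing first to the case where $G'$ is a $p$-group and then ruling out the possibility that $|G'|$ is divisible by two distinct primes. So first I would reduce to the case that $G'$ is a $p$-group. In that situation $F(G)=G'$ (since $G$ is metabelian with cyclic $G/G'$ of prime order $3$, the Fitting subgroup cannot be larger), hence $G'/\Phi(G)=F(G)/\Phi(G)$ is $p$-elementary abelian and $f(G/\Phi(G))\leq 3$ by Lemma~1.1. Now $G/\Phi(G)$ satisfies the hypotheses of Lemma~\ref{casopelem} with $|G/\Phi(G):(G/\Phi(G))'|=3$, so $G/\Phi(G)\in\{\mathsf{A}_4,\mathsf{F}_{21}\}$. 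Reading off $(G/\Phi(G))'$ in each case forces $p\in\{2,7\}$, and determines the isomorphism type of $G'/\Phi(G)$: it is $\mathsf{C}_7$ when $p=7$ and $\mathsf{C}_2\times\mathsf{C}_2$ when $p=2$.

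Next I would handle each prime separately. For $p=7$, $G'$ is cyclic (a $p$-group with cyclic Frattini quotient), of order $7^{l}$; if $l\geq 2$ there is a characteristic subgroup $K\leq G'$ with $|G'/K|=7^{2}$, giving a group $G/K$ of order $3\cdot 7^{2}=147$ with $f\leq 3$, contradicting Lemma~\ref{casos}; hence $l=1$ and $G=\mathsf{F}_{21}$. For $p=2$ the structure is a little more delicate since $G'/\Phi(G)=\mathsf{C}_2\times\mathsf{C}_2$ only tells us $G'$ is $2$-generated. Write $G'=U\times V$ with $U$ cyclic of order $2^{n}$, $V$ cyclic of order $2^{m}$, $n\geq m$; taking the subgroup $H\leq U$ of order $2^{m}$, the subgroup $K=H\times V$ is normal in $G$ (it is characteristic in $G'$, being the set of $2^{m}$-th powers... actually I would just observe $K$ is the unique such subgroup up to the action and check normality directly) and $(G/K)'$ is a cyclic $2$-group with $|G/K:(G/K)'|=3$ and $f(G/K)\leq 3$, which Lemma~\ref{casopelem} forbids. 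Hence $n=m$ and $G'$ is homocyclic of rank $2$; if $n\geq 2$ there is a characteristic $T\leq G'$ with $G'/T\cong\mathsf{C}_4\times\mathsf{C}_4$, giving $|G/T|=48$ with $f\leq 3$, against Lemma~\ref{casos}; so $n=1$ and $G=\mathsf{A}_4$.

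Finally I would rule out two distinct prime divisors of $|G'|$. If both $2$ and $7$ divide $|G'|$, then by the $p$-group analysis just completed the Sylow $2$- and Sylow $7$-subgroups of $G'$ are, respectively, a rank-$2$ elementary $2$-group-modulo-Frattini and cyclic-modulo-Frattini, so $G'/\Phi(G)\cong\mathsf{C}_2\times\mathsf{C}_2\times\mathsf{C}_7$, whence $|G/\Phi(G)|=84$ with $f\leq 3$ — impossible by Lemma~\ref{casos}. Therefore $G'$ is a $p$-group and the classification $G\in\{\mathsf{A}_4,\mathsf{F}_{21}\}$ follows. The main technical obstacle I anticipate is the $p=2$ case: establishing that $G'$ is homocyclic of rank $2$ (rather than merely $2$-generated) requires carefully choosing the normal subgroup $K$ so that the quotient has a \emph{cyclic} derived subgroup, in order to re-invoke Lemma~\ref{casopelem}; once that reduction is in place the order-$48$ argument finishes it cleanly.
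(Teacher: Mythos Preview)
Your proposal is correct and follows essentially the same route as the paper's proof: the same reduction to $G'$ a $p$-group via $G/\Phi(G)\in\{\mathsf{A}_4,\mathsf{F}_{21}\}$, the same characteristic-subgroup quotients leading to orders $147$ and $48$ ruled out by Lemma~\ref{casos}, the same $K=H\times V$ trick to force $G'$ homocyclic when $p=2$, and the same order-$84$ quotient to exclude mixed primes. Your parenthetical hesitation about the normality of $K$ is easily resolved by noting $K=\Omega_m(G')=\{g\in G':g^{2^m}=1\}$ is characteristic in $G'$; the paper glosses over this in the same way.
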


\begin{thm}\label{caso4ab}
Let $G$ be a metabelian group with $f(G)\leq 3$ such that  $|G:G'|=4$. Then $G \in \{\mathsf{F}_{20},\mathsf{F}_{52}\}$.
\begin{proof}
As in Theorem \ref{caso2ab}, we assume first that $G'$ is a $p$-group. By Lemma \ref{casopelem}, we have that $G/\Phi(G) \in  \{\mathsf{F}_{20},\mathsf{F}_{52}\}$ and hence $G'$ is a cyclic $p$-group, where $p \in \{5,13\}$.

 In both cases $G'$ is a cyclic group of order $p^{l}$. If $l \geq 2$, then there exists $K$ characteristic in $G'$ of order $p^{l-2}$. Thus, $|G/K|=4\cdot p^{2}$ and $f(G/K)\leq 3$. For $p=5$, we have that $|G/K|=4\cdot 5^{2}=100$ and for $p=13$, we have  that $|G/K|=4\cdot 13^{2}=676$. However, by Lemma \ref{casos} there is no group of order $100$ or $676$ with $f(G)\leq3$. 

Therefore, we have that the  prime divisors of $G'$ are contained in $\{5,13\}$ and if $G'$ is a $p$-group then $G \in \{\mathsf{F}_{20},\mathsf{F}_{52}\}$. Assume now that both $5$ and $13$ and divide $|G'|$. Then $G'/\Phi(G)= \mathsf{C}_{5}\times \mathsf{C}_{13}$. Thus, $f(G/\Phi(G))\leq 3$, $|G/\Phi(G)|=4\cdot 5 \cdot 13=260$, which contradicts Lemma \ref{casos}. Therefore, $G'$ must be a $p$-group and the result follows.
\end{proof}
\end{thm}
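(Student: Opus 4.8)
The plan is to mimic the structure of the proofs of Theorems~\ref{caso2ab} and \ref{caso3ab}. First I would reduce to the case where $G'$ is a $p$-group. Since $|G:G'|=4$ and $G$ is metabelian with $f(G)\leq 3$, Lemma~\ref{casopelem} applied to $G/\Phi(G)$ (whose derived subgroup $G'/\Phi(G)$ is $p$-elementary abelian, using $F(G)=G'$ and that $(|G:G'|,p)=1$, the coprimality coming from the fact that otherwise $G$ would be nilpotent, contradicting Theorem~\ref{nilpotent}) forces $G/\Phi(G)\in\{\mathsf{F}_{20},\mathsf{F}_{52}\}$. In particular $G'/\Phi(G)$ is cyclic, so $G'$ is a cyclic $p$-group with $p\in\{5,13\}$.

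Next, with $G'$ cyclic of order $p^l$, $p\in\{5,13\}$, I would bound $l$. If $l\geq 2$, there is a characteristic subgroup $K\leq G'$ with $|G':K|=p^2$, and then $G/K$ is a group of order $4p^2\in\{100,676\}$ with $f(G/K)\leq f(G)\leq 3$ by Lemma~2.1. Lemma~\ref{casos} says no such group exists, so $l=1$. Thus when $G'$ is a $p$-group, $|G|=4p$ and one checks directly (or notes that $G/\Phi(G)=G$ already) that $G\in\{\mathsf{F}_{20},\mathsf{F}_{52}\}$.

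It remains to rule out $|G'|$ being divisible by two distinct primes. From the previous step the only primes that can divide $|G'|$ are $5$ and $13$. Suppose both do. Passing to $G/\Phi(G)$, and using the $p$-group case to see that the Sylow $5$- and Sylow $13$-subgroups of $G'/\Phi(G)$ are each cyclic of order $5$ and $13$ respectively, we get $G'/\Phi(G)=\mathsf{C}_5\times\mathsf{C}_{13}$, so $|G/\Phi(G)|=4\cdot5\cdot13=260$ and $f(G/\Phi(G))\leq 3$, contradicting Lemma~\ref{casos}. Hence $G'$ is a $p$-group and the classification $G\in\{\mathsf{F}_{20},\mathsf{F}_{52}\}$ follows.

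The only mildly delicate point is the reduction that legitimizes applying Lemma~\ref{casopelem} to $G/\Phi(G)$: one must know $F(G)=G'$ (so that $G'/\Phi(G)$ is genuinely elementary abelian and still has index $4$ in $G/\Phi(G)$) and that $p\nmid 4$; both are immediate here since $G$ is metabelian and non-nilpotent. Everything else is the same characteristic-subgroup-squeezing argument as in Theorems~\ref{caso2ab} and \ref{caso3ab}, with the arithmetic $4\cdot 5^2=100$, $4\cdot 13^2=676$, $4\cdot 5\cdot 13=260$ handed off to Lemma~\ref{casos}.
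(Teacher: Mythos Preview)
Your proposal is correct and follows essentially the same route as the paper's own proof: reduce to $G'$ a $p$-group via $G/\Phi(G)$ and Lemma~\ref{casopelem}, kill $l\geq 2$ using the characteristic subgroup $K$ of index $p^2$ and Lemma~\ref{casos} for orders $100$ and $676$, and then rule out the mixed case $5\cdot13\mid|G'|$ via $|G/\Phi(G)|=260$ and Lemma~\ref{casos}. Your added remarks about $F(G)=G'$ and $p\nmid 4$ simply make explicit what the paper absorbs into the phrase ``As in Theorem~\ref{caso2ab}''.
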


\section{Solvable case}

In this section we classify all solvable groups with $f(G)\leq 3$. By the results of the previous section, we have that $G/G'' \in \{\mathsf{C}_{2},\mathsf{C}_{3},\mathsf{C}_{4},\mathsf{S}_{3}, \mathsf{D}_{10},\mathsf{A}_{4},\mathsf{D}_{14},\mathsf{D}_{18},\mathsf{F}_{20},\mathsf{F}_{21},\mathsf{F}_{52}\}$. Therefore, the result will be completed once we prove that  $G''=1$. We will begin by  determining all possible $\Q(\chi)$ for $\chi \in \Irr(G|G'')$ and then, we will use this to bound $k(G)$. Finally, the result will follow from Theorems \ref{Vera-Lopez} and \ref{Vera-Lopez2} and some calculations.

\begin{lem}\label{restocasos}
Let $G$ be a group such that $G''\not=1$, $G''$ is $p$-elementary abelian, $G/G''\in \{\mathsf{S}_{3},\mathsf{D}_{10},\mathsf{D}_{14},\mathsf{F}_{21},\mathsf{F}_{20},\mathsf{F}_{52}\}$ and $p$ does not divide $|G'/G''|$.  If $r=|G:G'|$, then $\Q(\chi)\subseteq \Q_{rp}$ for every $\chi \in \Irr(G|G'')$.
\begin{proof}
By Lemma \ref{order}, we know that for every $g \in G \setminus G'$ and for every $\chi \in \Irr(G)$, $\chi(g) \in \Q_{rp}$. Therefore, we only have to prove that  $\Q(\chi_{G'})\subseteq \Q_{rp}$  for every $\chi \in \Irr(G|G'')$. It suffices to prove that $\Q(\psi)\subseteq \Q_{rp}$  for every $\psi \in \Irr(G'|G'')$.

Let $\lambda \in \Irr(G'')\setminus \{1_{G''}\}$. We know that $\Q(\lambda)\subseteq \Q_{p}$ and  $\lambda$ cannot be extended to an irreducible character of $G'$. Since $|G':G''|$ is prime, we deduce that $\lambda^{G'}\in \Irr(G')$. Now, we have that $\Q(\lambda^{G'})\subseteq \Q(\lambda)\subseteq \Q_{p}\subseteq \Q_{rp}$ and hence the result follows.
\end{proof}
\end{lem}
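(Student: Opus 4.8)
The plan is to reduce the statement to a claim about characters of $G'$, and then to use Clifford theory on the prime-index inclusion $G'' \nor G'$. First I would invoke Lemma \ref{order}: since $G/G'' \in \{\mathsf{S}_3,\mathsf{D}_{10},\mathsf{D}_{14},\mathsf{F}_{21},\mathsf{F}_{20},\mathsf{F}_{52}\}$, in each of these cases $G/G''$ is (isomorphic to) a Frobenius group $\mathsf{F}_{rq}$ with cyclic complement $\mathsf{C}_r \cong G/G'$ of order $r = |G:G'|$ and kernel of prime order $q = |G':G''|$ — note $G'/G''$ is indeed cyclic of prime order in all six cases — and the hypothesis says $G''$ is $p$-elementary abelian with $p \nmid |G'/G''|$. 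Hence Lemma \ref{order} applies and gives $\chi(g) \in \Q_{rp}$ for every $g \in G \setminus G'$ and every $\chi \in \Irr(G)$. So the values of $\chi$ on $G \setminus G'$ already lie in $\Q_{rp}$, and it remains only to control the values of $\chi$ on $G'$, i.e. to show $\Q(\chi_{G'}) \subseteq \Q_{rp}$ for every $\chi \in \Irr(G \mid G'')$.

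Next I would reduce that to a statement purely inside $G'$. Every irreducible constituent of $\chi_{G'}$, for $\chi \in \Irr(G \mid G'')$, is a character $\psi \in \Irr(G' \mid G'')$: indeed if $G'' \leq \ker \psi$ for every constituent $\psi$ of $\chi_{G'}$, then $G'' \leq \ker \chi$, contradicting $\chi \in \Irr(G\mid G'')$. (More precisely, if $\chi \in \Irr(G\mid G'')$ then by Clifford theory $\chi_{G'}$ is a sum of $G$-conjugates of some $\psi \in \Irr(G')$, and since $G''$ is normal in $G$, all of these conjugates lie over $G''$.) Since $\Q(\chi_{G'})$ is generated by the values $\sum_i \psi_i(g)$ over $g \in G'$ where the $\psi_i$ run through the constituents, and each $\psi_i$ is $G$-conjugate hence has the same field of values, it suffices to prove $\Q(\psi) \subseteq \Q_{rp}$ for every $\psi \in \Irr(G' \mid G'')$.

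Finally I would handle this last point by Clifford theory on $G'' \nor G'$. Take $\lambda \in \Irr(G'') \setminus \{1_{G''}\}$ lying under $\psi$. Since $G''$ is $p$-elementary abelian, $\lambda$ is linear of $p$-power order, so $\Q(\lambda) \subseteq \Q_p$. Because $|G':G''| = q$ is prime and $\lambda$ is not $G'$-invariant (if it were invariant it would extend, as $G'/G''$ is cyclic of prime order — Theorem 11.22 of \cite{Isaacscar} — and that would put $\lambda$-values into rational territory but more to the point: in the Frobenius quotient the kernel acts without fixed points on the nontrivial characters of $G''$ that are not $G'$-invariant; actually the cleanest line is simply that $I_{G'}(\lambda) \in \{G'', G'\}$ and if $I_{G'}(\lambda) = G'$ then $\lambda$ extends to $G'$ and all such extensions are Galois-conjugate to characters lying over $G''$ with field of values inside $\Q_p$), the relevant case is $I_{G'}(\lambda) = G''$, whence by the Clifford correspondence $\psi = \lambda^{G'} \in \Irr(G')$ and $\Q(\psi) \subseteq \Q(\lambda) \subseteq \Q_p \subseteq \Q_{rp}$. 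In the degenerate case $I_{G'}(\lambda)=G'$ one gets $\psi$ an extension of $\lambda$ with $\Q(\psi)\subseteq\Q(\lambda)\subseteq\Q_p$ by Lemma \ref{exten} (using $(|G':G''|,o(\lambda)\lambda(1))=(q,p^a)=1$). Either way $\Q(\psi) \subseteq \Q_{rp}$, and combining with the values on $G\setminus G'$ gives $\Q(\chi) \subseteq \Q_{rp}$. The only mildly delicate point is verifying that $G/G''$ really is a Frobenius group of the shape required by Lemma \ref{order} for each of the six listed quotients — a finite check — and making sure the non-invariant-versus-invariant dichotomy for $\lambda$ is dispatched cleanly; both $\lambda^{G'}$ and the extension case land in $\Q_p$, so no genuine obstacle arises.
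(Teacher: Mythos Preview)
Your overall strategy matches the paper's: use Lemma~\ref{order} for values on $G\setminus G'$, reduce to $\psi\in\Irr(G'\mid G'')$, and apply Clifford theory on the prime-index inclusion $G''\nor G'$. The reduction steps are fine.

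The gap is in your treatment of the dichotomy $I_{G'}(\lambda)\in\{G'',G'\}$. You try to dispatch the invariant case by invoking Lemma~\ref{exten}, but that lemma only produces \emph{one} extension $\mu$ with $\Q(\mu)=\Q(\lambda)\subseteq\Q_p$. By Gallagher there are $q=|G':G''|$ extensions $\mu\rho$ with $\rho\in\Irr(G'/G'')$, and for $\rho\neq 1$ you would only get $\Q(\mu\rho)\subseteq\Q_{pq}$. Since in these six quotients $q\neq r$ and $q\nmid rp$ (e.g.\ $G/G''=\mathsf{S}_3$ has $r=2$, $q=3$), the inclusion $\Q_{pq}\subseteq\Q_{rp}$ fails in general, so your argument does not cover all $\psi$ lying over an invariant $\lambda$.

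The fix, which is exactly what the paper uses, is to observe that the invariant case is vacuous: any extension of the linear character $\lambda$ to $G'$ would itself be linear, hence trivial on $(G')'=G''$, forcing $\lambda=1_{G''}$. Equivalently, a nontrivial $\lambda\in\Irr(G'')$ cannot extend to $G'$; since $G'/G''$ is cyclic (Theorem~11.22 of \cite{Isaacscar}), this means $\lambda$ is not $G'$-invariant, so $I_{G'}(\lambda)=G''$ and $\psi=\lambda^{G'}$ with $\Q(\psi)\subseteq\Q(\lambda)\subseteq\Q_p\subseteq\Q_{rp}$. Once you insert this one-line observation, your proof coincides with the paper's.
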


\begin{lem}\label{casoD18}
Let $G$ be a group such that $G''\not=1$, $G''$ is $p$-elementary abelian, $G/G''=\mathsf{D}_{18}$ and $p\not=3$. If $f(G)\leq 3$, then $k(G)\leq 15$. Moreover, if  $p=2$, then $k(G)\leq 10$ and if   $p$ is an odd prime with $p\equiv -1 \pmod 3$, then $k(G)\leq 12$. 
\begin{proof}
We claim that $\Q(\chi_{G'})\subseteq \Q_{3p}$  for every $\chi \in \Irr(G|G'')$. Let $\lambda \in \Irr(G'')\setminus \{1_{G''}\}$ and let $T=I_{G'}(\lambda)$. We know that $\Q(\lambda)\subseteq \Q_{p}$ and  $\lambda$ cannot be extended to an irreducible character of $G'$. Since $(|G''|,|G':G''|)=1$, applying Lemma \ref{exten}, we deduce that $\lambda$ extends to $\mu \in \Irr(T)$ with $\Q(\mu)=\Q(\lambda)\subseteq \Q_{p}$. It follows that $T<G'$ and hence we have two different possibilities. The first one is that  $T=G''$. In this case, $\lambda^{G'}\in \Irr(G')$ and hence $\Q(\lambda^{G'})\subseteq \Q(\lambda)\subseteq \Q_{p}\subseteq \Q_{3p}$. The second one is that $|T:G''|=3$. In this case, $\Irr(T/G'')=\{1,\rho, \rho^2\}$. By Gallagher's Theorem, we have that $\Irr(T|\lambda)=\{\mu, \rho\mu, \rho^2\mu\}$ and since $\Q(\rho)=\Q_{3}$, we deduce that $\Q(\psi)\subseteq \Q_{3p}$ for every $\psi \in \Irr(T|\lambda)$. Now, let  $\psi \in \Irr(T|\lambda)$. Thus, by the Clifford correspondence, $\psi^{G'}\in \Irr(G')$ and hence $\Q(\psi^{G'})\subseteq \Q(\psi)\subseteq \Q_{3p}$. Thus, $\Q(\chi_{G'})\subseteq \Q_{3p}$  for every $\chi \in \Irr(G|G'')$.

Assume that $f(G) \leq 3$. Since $\Irr(G/G'')$ contains 3 rational characters, we deduce that   $\Irr(G|G'')$ does not contain rational characters.

Assume first that $p$ is odd. By Lemma \ref{order}, we know that for every $g \in G \setminus G'$ and for every $\chi \in \Irr(G)$, $\chi(g) \in \Q_{2p}=\Q_{p}\subseteq \Q_{3p}$. Thus, by the previous claim, if  $\chi \in \Irr(G|G'')$, then $\Q(\chi)\subseteq \Q_{3p}$ and hence it  is either quadratic extension of $\Q_{3p}$ or a cubic extension of $\Q_{3p}$. We know that $\Q_{3p}$ possesses three quadratic extensions and at most one cubic extension. Thus, $|\Irr(G|G'')|\leq 3\cdot 2+1\cdot 3=9$ and hence $k(G)=|\Irr(G)|=|\Irr(G/G'')|+|\Irr(G|G'')|\leq  6+9=15$. We also observe that $\Q_{3p}$ possesses a cubic extension if and only if $p\equiv 1 \pmod 3$. Thus, if $p\equiv -1 \pmod 3$, then $k(G)\leq 12$.

Assume now that $p=2$. In this case, $\Q_{3p}=\Q_3$. By Lemma \ref{order}, we know that for every $g \in G \setminus G'$ and for every $\chi \in \Irr(G)$, $\chi(g) \in \Q_{2p}=\Q(i)$. Thus, if  $\chi \in \Irr(G|G'')$, then either $\Q(\chi)=\Q_{3}$ or $\Q(\chi)=\Q(i)$. Since $\Q(i)$ and $\Q_{3}$ are both quadratic, we have that $|\Irr(G|G'')|\leq 2\cdot 2$ and hence $k(G)\leq 6+4=10$.
\end{proof}
\end{lem}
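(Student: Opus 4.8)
The plan is to follow closely the structure already used in Lemma \ref{restocasos}, but now adapted to the case $G/G'' = \mathsf{D}_{18}$, where $|G':G''| = 3$ and $G/G' \cong \mathsf{C}_2$, so $r = 2$. The first task is to establish the key containment $\Q(\chi_{G'}) \subseteq \Q_{3p}$ for every $\chi \in \Irr(G|G'')$. I would do this by analyzing the inertia group $T = I_{G'}(\lambda)$ for $\lambda \in \Irr(G'') \setminus \{1_{G''}\}$. Since $(|G''|, |G':G''|) = 1$, Lemma \ref{exten} gives an extension $\mu \in \Irr(T)$ of $\lambda$ with $\Q(\mu) = \Q(\lambda) \subseteq \Q_p$; since $\lambda$ is not $G'$-invariant (otherwise it would extend, because $G'/G''$ is cyclic), $T < G'$. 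Because $|G':G''| = 3$ is prime, either $T = G''$ — in which case $\lambda^{G'} \in \Irr(G')$ has $\Q(\lambda^{G'}) \subseteq \Q_p$ — or $|T:G''| = 3$, in which case Gallagher's theorem lists $\Irr(T|\lambda) = \{\mu, \rho\mu, \rho^2\mu\}$ with $\rho$ of order $3$ lying over the trivial character of $G''$, so $\Q(\rho) = \Q_3$ and every character in $\Irr(T|\lambda)$ has field of values inside $\Q_{3p}$; then Clifford correspondence pushes this up to $G'$.

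Next I would handle the behavior on $G \setminus G'$ using Lemma \ref{order}: since $G/G'' = \mathsf{D}_{18}$ means $G/G'' $ is a Frobenius group of the required shape with $G/G' \cong \mathsf{C}_2$, every $g \in G \setminus G'$ satisfies $o(g) \mid 2p$, hence $\chi(g) \in \Q_{2p}$ for all $\chi \in \Irr(G)$. Combining this with the claim, for $\chi \in \Irr(G|G'')$ we get $\Q(\chi) \subseteq \Q_{3p}\Q_{2p}$; when $p$ is odd, $\Q_{2p} = \Q_p \subseteq \Q_{3p}$ by part (vii) of the cyclotomic lemma, so $\Q(\chi) \subseteq \Q_{3p}$, while when $p = 2$ we get $\Q_{3p} = \Q_3$ and $\Q_{2p} = \Q_4 = \Q(i)$, so $\Q(\chi) \in \{\Q_3, \Q(i)\}$ (both quadratic). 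Now I invoke $f(G) \leq 3$: since $\Irr(G/G'') = \Irr(\mathsf{D}_{18})$ already contains $3$ rational irreducible characters (the three linear-plus-one... in fact $\mathsf{D}_{18}$ has characters with $\Q$-values among its six irreducibles), $\Irr(G|G'')$ contains no rational character. For $p$ odd, each such $\chi$ has $\Q(\chi)$ a quadratic or cubic subextension of $\Q_{3p}$; by the cyclotomic lemma $\Q_{3p}$ (with $p \neq 3$) has exactly $3$ quadratic subfields and has a cubic subfield iff $p \equiv 1 \pmod 3$, giving at most one cubic subfield. Since $f(G) \leq 3$ allows at most $2$ characters per quadratic field and at most $3$ per cubic field, $|\Irr(G|G'')| \leq 3 \cdot 2 + 1 \cdot 3 = 9$, whence $k(G) = 6 + |\Irr(G|G'')| \leq 15$; and if $p \equiv -1 \pmod 3$ there is no cubic subfield, so $|\Irr(G|G'')| \leq 6$ and $k(G) \leq 12$. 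For $p = 2$, the only options for $\Q(\chi)$ are the two quadratic fields $\Q_3$ and $\Q(i)$, so $|\Irr(G|G'')| \leq 2 \cdot 2 = 4$ and $k(G) \leq 10$.

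The main obstacle I anticipate is verifying carefully that $\Irr(\mathsf{D}_{18})$ genuinely contributes $3$ rational irreducible characters (so that the non-rationality of every character in $\Irr(G|G'')$ is forced), and pinning down the exact count of quadratic and cubic subextensions of $\Q_{3p}$ — but both of these are exactly what parts (i)--(vii) of the cyclotomic lemma were set up to provide, with the split $p \equiv 1$ vs.\ $p \equiv -1 \pmod 3$ governing the cubic count. The only genuinely new wrinkle compared with Lemma \ref{restocasos} is the possibility $|T:G''| = 3$, which is why the factor of $3$ (rather than $p$ alone) enters the cyclotomic field; everything else is bookkeeping with Clifford theory and Galois theory already developed in Section \ref{Section2}.
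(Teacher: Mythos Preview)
Your approach is essentially identical to the paper's: establish $\Q(\chi_{G'}) \subseteq \Q_{3p}$ by analyzing the inertia group $T = I_{G'}(\lambda)$ via Lemma~\ref{exten} and Gallagher, handle $G\setminus G'$ with Lemma~\ref{order}, then count quadratic and cubic subfields of $\Q_{3p}$ exactly as the paper does.

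One factual slip to fix: you write $|G':G''| = 3$, but since $|G/G''| = 18$ and $|G/G'| = 2$ one has $|G':G''| = 9$ with $G'/G'' \cong \mathsf{C}_9$. Your two cases $T = G''$ and $|T:G''| = 3$ are nonetheless exactly the right ones (they are the proper subgroups of $G'/G''\cong\mathsf{C}_9$), so simply replace the justification ``because $|G':G''| = 3$ is prime'' by ``because $G'/G''$ is cyclic of order $9$'' and the argument goes through verbatim, matching the paper's proof.
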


\begin{lem}\label{casoA4}
Let $G$ be a group such that $G''\not=1$, $G''$ is $p$-elementary abelian and $G/G''=A_{4}$. If $f(G)\leq 3$,   then $k(G)\leq12$. If moreover $p\not\equiv 1 \pmod 3$,  then $k(G)\leq 9$. 
\begin{proof}
First, we study the orders of the elements of $G$. If $g \in G''$, then $o(g)$ divides $p$. If $g \in G'\setminus G''$, then $o(g)$ divides $2p$. Finally, if $g \in G \setminus G'$, then $o(g)$ divides $3p$. 

Let $\chi\in \Irr(G)$. Then, $\Q(\chi_{G''})\subseteq \Q_{p}$. If $g \in G \setminus G'$, then $\chi(g) \in \Q_{3p}$. Finally, if $g \in G'\setminus G''$, then $\chi(g)\in \Q_{2p}$. Thus, $\Q(\chi)$ is contained in $\Q_{2p}$ or in $\Q_{3p}$.

If $p=2$, then $\Q_{2p}=\Q(i)$ and $\Q_{3p}=\Q_{3}$. Therefore, we have that $k(G)=|\Irr(G)|\leq 2\cdot 2+3=7<9$.

Assume now that $p\not=2$. Then $\Q_{2p}=\Q_{p}$ and it follows that $\Q(\chi) \subseteq \Q_{3p}$ for every $\chi \in \Irr(G)$. Assume first that  $p=3$, then $\Q_{3p}=\Q_{9}$. Then $\Q_{3p}$  possesses only one quadratic extension and one cubic extension. Therefore, $k(G)=|\Irr(G)|\leq 2\cdot 1+3\cdot 1+3=8<9$. Finally, assume that $p\not=3$ is an odd prime. Then $\Q_{3p}$ has three quadratic extensions and at most one cubic extension.  It follows that $k(G)\leq 2\cdot 3+3\cdot 1+3=12$. We also have that if $p\equiv -1 \pmod 3$, then $\Q_{3p}$ has no cubic extension and hence $k(G)\leq 9$.
\end{proof}
\end{lem}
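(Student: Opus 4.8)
The plan is to mimic the structure of Lemma \ref{casoD18}: first pin down the possible orders of elements of $G$, then use those to confine $\Q(\chi)$ to a single cyclotomic field $\Q_{3p}$ (after disposing of the prime $2$ separately), and finally count the number of quadratic and cubic subextensions available inside $\Q_{3p}$ using Lemma~2.7 (the $\Q_n$ lemma). Since $G/G''=\mathsf A_4$ has derived length structure $G''\trianglelefteq G'\trianglelefteq G$ with $G'/G''\cong\mathsf C_2\times\mathsf C_2$ and $G/G'\cong\mathsf C_3$, and since $G''$ is $p$-elementary abelian, an element $g$ lies in one of three layers. First I would argue $g\in G''\Rightarrow o(g)\mid p$; $g\in G'\setminus G''\Rightarrow o(g)\mid 2p$ (because $G'/G''$ has exponent $2$ and $G''$ has exponent $p$, and $G'$ is metabelian so a suitable conjugacy/ coprimality argument, or just the fact that $G'$ is a $\{2,p\}$-group whose Sylow structure forces $o(g)\mid 2p$ — note $p$ could be $2$ here, in which case $o(g)\mid 4$, still dividing $2p$ only if one is careful, so the $p=2$ case must be handled on its own); and $g\in G\setminus G'\Rightarrow o(g)\mid 3p$. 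For the last one I would invoke Lemma \ref{order} with $r=3$, $q=2$: $G/G''=\mathsf A_4$ has $G/G'\cong\mathsf C_3$ as the Frobenius complement of $\mathsf A_4=\mathsf F_{12}$, $G''$ is $p$-elementary abelian, so $o(g)\mid 3p$ for all $g\in G\setminus G'$.

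Next, for $\chi\in\Irr(G)$ I would note $\chi(g)\in\Q_{o(g)}$, so the values of $\chi$ on $G''$ lie in $\Q_p$, on $G'\setminus G''$ in $\Q_{2p}$, and on $G\setminus G'$ in $\Q_{3p}$. Hence $\Q(\chi)$ is contained in the compositum, which equals $\Q_{2p}\cdot\Q_{3p}$. When $p$ is odd, $\Q_{2p}=\Q_p\subseteq\Q_{3p}$ by Lemma~2.7(vii), so $\Q(\chi)\subseteq\Q_{3p}$ for every $\chi$. When $p=2$ this fails: then $\Q_{2p}=\Q_4=\Q(i)$ and $\Q_{3p}=\Q_6=\Q_3$, so $\Q(\chi)\subseteq\Q(i)$ or $\Q(\chi)\subseteq\Q_3$, and since both $\Q(i)$ and $\Q_3$ are themselves quadratic over $\Q$, the only non-rational fields of values that can occur are $\Q(i)$ and $\Q_3$; with $f(G)\le 3$ each carries at most $2$ irreducible characters, and together with the rational characters (of which there are at least the ones inflated from $\mathsf A_4$, but for the crude bound we just use $f(G)\le 3$) we get $k(G)\le 2\cdot 2+3=7<9$, covering $p=2$.

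For $p$ an odd prime, $\Q(\chi)\subseteq\Q_{3p}$, so each non-rational $\Q(\chi)$ is a quadratic or cubic subextension of $\Q_{3p}$. Here I split on $p$: if $p=3$ then $\Q_{3p}=\Q_9$, which by Lemma~2.7(iii),(iv) has exactly one quadratic and exactly one cubic subextension, giving (using $f(G)\le 3$) at most $2\cdot 1+3\cdot 1=5$ non-rational characters plus $3$ rational ones, so $k(G)\le 8<9$. If $p\ne 3$ is odd, then $3p=3\cdot p$ is a product of two distinct odd primes, so by Lemma~2.7(v) $\Q_{3p}$ has exactly $3$ quadratic subextensions, and by Lemma~2.7(vi) it has a cubic subextension iff $\Q_p$ does (since $\Q_3$ does), i.e.\ iff $p\equiv 1\pmod 3$, in which case there is exactly one cubic subextension of $\Q_{3p}$ arising from $p$ (the one from $\Q_3$ gives $\Q_3$ itself which is quadratic not cubic, and there is no "genuinely new" cubic since only $\Q_p$ contributes), and none otherwise. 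Thus the non-rational characters number at most $3\cdot 2+1\cdot 3=9$, giving $k(G)\le 9+3=12$; and if $p\equiv-1\pmod 3$ there is no cubic subextension, so at most $3\cdot 2=6$ non-rational characters and $k(G)\le 6+3=9$. Since $p\not\equiv1\pmod 3$ means $p=3$ or $p\equiv-1\pmod3$, both already handled, the refined bound $k(G)\le 9$ follows in that case.

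The main obstacle I anticipate is twofold. First, the cubic-subextension bookkeeping via Lemma~2.7(vi): one must check carefully that when $p\equiv1\pmod3$ the field $\Q_{3p}$ contributes exactly one cubic subextension relevant to fields of values (the statement of 2.7(vi) says "one cubic extension if only one of $\Q_{p^k},\Q_{q^t}$ contains a cubic extension", and here $\Q_3$ does not contain a cubic extension while $\Q_p$ does iff $p\equiv1\pmod3$), so the count is clean — but I would double-check the edge behaviour at $p=7$, the smallest relevant case. Second, and more delicate, is justifying $o(g)\mid 2p$ for $g\in G'\setminus G''$ when $p$ is odd: here $G'$ is a group of order $4p^d$ with $G'/G''\cong\mathsf C_2\times\mathsf C_2$ and $G''$ elementary abelian of order $p^d$; an element of $G'\setminus G''$ projects to an involution in $\mathsf C_2\times\mathsf C_2$, and I would argue that since $(2,p)=1$ the $2$-part and $p$-part of $o(g)$ multiply, the $2$-part is at most $2$ (the exponent of $G'/G''$), hence $o(g)\mid 2p$ — but one should make sure no element of order $4$ sneaks in, which it cannot because $G'/G''$ has exponent $2$ and $G''$ is a $p'$-group for odd $p$. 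For $p=2$ this argument breaks (elements of order $4$ are possible), which is exactly why the $p=2$ case was peeled off first.
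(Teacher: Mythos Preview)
Your proposal is correct and mirrors the paper's proof exactly: bound element orders layer by layer, deduce $\Q(\chi)\subseteq\Q_{3p}$ for odd $p$ (with $p=2$ handled separately via $\Q(i)$ and $\Q_3$), and then count the quadratic and cubic subfields of $\Q_{3p}$ using the $\Q_n$ lemma to bound $k(G)$. The only quibble is that Lemma~\ref{order} as stated requires the Frobenius kernel to have \emph{prime} order, which fails for $\mathsf A_4=\mathsf F_{12}$; but the direct argument you already sketch (every element of $\mathsf A_4\setminus V_4$ has order~$3$, so $g^3\in G''$ and hence $o(g)\mid 3p$) is immediate, and indeed the paper simply asserts this divisibility without citing Lemma~\ref{order}.
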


The next result completes the proof of the solvable case of Theorem A. 

\begin{thm}\label{solvable}
Let $G$ be a solvable group with $f(G)\leq 3$. Then $G \in \{\mathsf{C}_{2},\mathsf{C}_{3},\mathsf{C}_{4},\mathsf{S}_{3},\\ \mathsf{D}_{10},\mathsf{A}_{4},\mathsf{D}_{14},\mathsf{D}_{18},\mathsf{F}_{20},\mathsf{F}_{21},\mathsf{F}_{52}\}$.
\begin{proof}
 If $G$ is metabelian,  by Theorems \ref{caso2ab},\ref{caso3ab} and \ref{caso4ab}, $G\in \{\mathsf{C}_{2},\mathsf{C}_{3},\mathsf{C}_{4},\mathsf{S}_{3},\mathsf{D}_{10},\mathsf{A}_{4},\\ \mathsf{D}_{14}, \mathsf{D}_{18},\mathsf{F}_{20},\mathsf{F}_{21},\mathsf{F}_{52}\}$. Therefore, we only have to prove that $G''=1$.

Assume that $G''>1$. Taking an appropriate quotient, we may assume that $G''$ is a minimal normal subgroup of $G$. Since $G$ is solvable, we have that $G''$ is   $p$-elementary abelian for some prime $p$. We also have that $G/G''$ is a metabelian group with $f(G/G'')\leq 3$. Thus,   $G/G'' \in \{\mathsf{S}_{3}, \mathsf{D}_{10},\mathsf{A}_{4},\mathsf{D}_{14},\mathsf{D}_{18},\mathsf{F}_{20},\mathsf{F}_{21},\mathsf{F}_{52}\}$. 

We claim that we can assume that $G''$ is the unique minimal normal subgroup of $G$. Suppose that there exists $M$, a minimal normal subgroup of $G$ different of $G''$. Then $MG''/G''$ is a minimal normal subgroup of $G/G''$. On the one hand, if $G/G''\not=D_{18}$, then the only minimal normal subgroup of $G/G''$ is $G'/G''$. Thus, $G'=M\times G''$ and hence $G'$ is abelian, which is a contradiction. On the other hand, if $G/G''=D_{18}$, then the only possibility is that $|M|=3$. Let $\overline{G}=G/M$ and let $\overline{\cdot}$ denote the image in $G/M$. We have that  $f(\overline{G})\leq 3$, $\overline{G}''=\overline{G''}=MG''/M\cong G''/(M\cap G'')=G''$ and  $\overline{G}/\overline{G}'' \cong G/MG''\cong \mathsf{S}_{3}$. Therefore, $\overline{G}$ will be one of the studied cases. So, in any case, we may assume that $G$ is the only minimal subgroup of $G$, this is $G''=S(G)$. In particular, $k(G/S(G))=k(G/G'')\leq 7\leq 10$ and hence this hypothesis of Theorem \ref{Vera-Lopez2} is satisfied.

Since we are assuming that $G$ is not metabelian and $f(\mathsf{S}_4)=5>3$, we may apply Theorem \ref{Vera-Lopez3} to deduce that $\alpha(G)\geq 4$. In addition, if $k(G)\leq 11$, applying  Theorem \ref{Vera-Lopez}, we have that the only possibility is that $G''=1$, which is a contradiction. Thus, we will assume that $k(G)\geq 12$. As a consequence, if $4 \leq\alpha(G)\leq 9$, then applying Theorem \ref{Vera-Lopez2} we have that $f(G)>3$, which is impossible. Therefore, in the remaining, we will assume that $k(G)\geq 12$ and $\alpha(G)\geq 10$.

Now, we proceed to  study case by case. We study the case $G/G''=\mathsf{A}_{4}$ and the case $G/G''\not=\mathsf{A}_{4}$  separately .

\underline{Case $G/G''=\mathsf{A}_{4}$:} By Lemma  \ref{casoA4}, if $p\not\equiv 1 \pmod 3$, then $k(G)\leq 9<12$, which is imposible. Thus, we may assume that $p\equiv 1 \pmod 3$ and $k(G)=12$.  Since $\alpha(G)\geq10$, we have  that $G''$ contains a unique $G$-conjugacy class of non-trivial elements. As a consequence, $|G''|\leq 12+1=13$. We also have that $|G''|$ is a power of a prime, $p$, such that that $p\equiv 1 \pmod 3$. Thus, the only possibilities are $|G''|\in \{7,13\}$ and hence $|G|\in \{84,156\}$. By Lemma \ref{casos}, there is no group of order $84$ or $156$ with $f(G)\leq 3$ and hence we have a contradiction.

\underline{Case $G/G''\not=\mathsf{A}_{4}$:} In this case $G'/G''$ is a cyclic group. We claim that  $(|G':G''|,p)=1$.  Assume that $p$ divides  $|G':G''|$.  Then $G'$ is a $p$-group and hence $G''\subseteq \Phi(G')$. Therefore, $G'$ is cyclic and hence it is abelian, which is a contradiction. Thus,  the claim follows.  Now, we  study separately the case $G/G''=\mathsf{D}_{18}$ and the case $G/G''\in  \{\mathsf{S}_{3},\mathsf{D}_{10},\mathsf{D}_{14},\mathsf{F}_{21},\mathsf{F}_{20},\mathsf{F}_{52}\}$.

\begin{itemize}

\item \underline{Case $G/G''=\mathsf{D}_{18}$:} Since $p\not=3$, we may apply Lemma \ref{casoD18}. If $p=2$, then $k(G)\leq 10<12$ and hence we have a contradiction. Thus, we may assume that $p$ is odd. Assume now that $p$ is an odd prime such that $p\not\equiv 1 \pmod 3$. In this case $k(G)\leq 12$. Thus, $k(G)=12$ and reasoning as in the case $G/G''=\mathsf{A}_{4}$ we can deduce that $G''$ contains a unique $G$-conjugacy class of non-trivial elements.  It follows that $|G''|\leq 18+1=19$, $|G''|$ must be a power of a prime, $p$, with $p\not\equiv 1 \pmod 3$  and $|G''|=\frac{18}{|H|}+1$, where $H \leq \mathsf{D}_{18}$. Since there is no integer with the required properties,  we have a contradiction.

Assume finally that $p\equiv 1 \pmod 3$. In this case $k(G)\leq 15$. As before, we can deduce that $G''$ contains at most $4$ non-trivial conjugacy classes and hence $|G''|\leq 4 \cdot 18+1=73$. Therefore, $|G''|\in \{7, 13, 19, 31, 37, 43, 49,53, 61, 67, 73 \}$ and hence $|G| \in \{126, 234, 342, 558, 666, 774, 882, 954, 1098, 1206, 1314\}$. Applying again Lemma \ref{casos}, we have a contradiction.

\item  \underline{Case $G/G''\in  \{\mathsf{S}_{3},\mathsf{D}_{10},\mathsf{D}_{14},\mathsf{F}_{21},\mathsf{F}_{20},\mathsf{F}_{52}\}$:}  Since $(|G':G''|,p)=1$, we may apply Lemma \ref{restocasos}. Thus, if $r=|G:G'|$ and $\chi \in \Irr(G|G'')$, we have that $\Q(\chi)\subseteq \Q_{rp}$. We  study the cases $r=2,3,4$ separately.

\begin{itemize}
    \item [(i)] Case  $G/G''\in \{\mathsf{S}_{3},\mathsf{D}_{10},\mathsf{D}_{14}\}$:  In these cases  $|G:G'|=2$ and hence for all $\chi \in \Irr(G|G'')$ we have that $\Q(\chi)\subseteq \Q_{2p}=\Q_{p}$. Thus, $\Irr(G|G'')$ contains at most 5 non-rational characters. We also observe that $\Irr(G/G'')$ possesses at most $3$ non-rational character. Counting the rational characters, we have that $k(G)\leq 3+3+5=11<12$. That is a contradiction.

\item [(ii)]  Case  $G/G''=\mathsf{F}_{21}$:   If $\chi \in \Irr(G|G'')$ then $\Q(\chi)\subseteq\Q_{3p}$. Assume first that $p\not\in\{2,3\}$. Then, $\Q_{3p}$ contains three quadratic extensions and at most one cubic extension and one of these quadratic extensions is $\Q_{3}$. Since we have two characters in $\Irr(G/G'')$ whose field of values is $\Q_{3}$ there is no character in $\Irr(G|G'')$ whose field of values is $\Q_{3}$. Thus, $\Irr(G|G'')$ contains at most $2\cdot 2+3\cdot 1=7$ non-rational characters. Thus, $k(G)\leq 7+4+3=14$. Since $\Q_{3p}$ contains a cubic extension if and only if $p\equiv 1 \pmod 3$, we deduce that if $p\equiv -1 \pmod 3$, then $k(G)\leq 11<12$. Therefore, we deduce that $p\equiv 1 \pmod 3$. Now,  reasoning as in the case  $G/G''=\mathsf{D}_{18}$, we may assume that $|G''|$ contains at most $3$ non-trivial $G$-conjugacy classes. Therefore, $|G''|$ is a prime power of a prime, $p$, such that $p\equiv 1 \pmod 3$ and $|G''|-1$ must be the sum of at most three divisors of $|G/G''|=21$. It follows that  $|G''|\in \{7,43\}$. Applying that $(|G':G''|,p)=1$, we have that $|G''|=43$ and hence $|G|=21\cdot 43=903$. However, by Lemma \ref{casos}, there is no group of order $903$ with $f(G)\leq 3$.

Reasoning similarly, we can deduce that if $p=2$, then $k(G)\leq 7<12$ and hence we have a contradiction.

 Finally, assume that $p=3$. In this case $\Q_{3p}=\Q_{9}$ contains only one quadratic extension and one cubic extension. Since the unique quadratic extension of $\Q_9$ is $\Q_3$, we deduce that that $\Irr(G|G'')$ contains at most $3$ non-rational characters. Thus, $k(G)\leq 3+4+3=10<12$ and hence we have a contradiction.

\item [(iii)] Case $G/G''\in \{\mathsf{F}_{20},\mathsf{F}_{52}\}$:  Then $G/G''=\mathsf{F}_{4q}$ for $q \in \{5,13\}$. Thus, applying Lemma \ref{restocasos}, we have that  $\Q(\chi)\subseteq \Q_{4p}$ for every $\chi \in \Irr(G|G'')$. Reasoning as in the case $G/G''=\mathsf{F}_{21}$, we have that if $p\not=2$, then $\Irr(G|G'')$ contains at most $7$ non-rational characters and if $p=2$, then  $\Irr(G|G'')$ cannot contain non-rational characters. Therefore, if $p=2$ then $k(G)\leq 8<12$, which is a contradiction. Thus, we may assume that $p$ is an odd prime.

Before studying the remaining cases, we claim  that $|G''|\equiv 1 \pmod q$. Since $(|G:G''|,p)=1$, applying the Schur-Zassenhaus Theorem, we have that $G''$ is complemented in $G$ by $U\ltimes  V$, where $U$ is cyclic of order $4$ and $V$ is cyclic of order $q$. We claim that $V$ cannot fix any non-trivial element of $G''$. We have that the action of $V$ on $G''$ is coprime. Thus, by Theorem 4.34 of \cite{Isaacs}, $G''=[G'',V]\times C_{G''}(V)$. Since  $C_{G''}(V)\leq G''$ is normal in $G$ and  $G''$ is minimal normal, we have that  either $C_{G''}(V)=1$ or $C_{G''}(V)=G''$. If $C_{G''}(V)=G''$, then $G'$ is abelian, which is a contradiction. Thus, $C_{G''}(V)=1$ and hence $V$ does not fix any non-trivial element in $G''$. Therefore, $|G''|\equiv 1 \pmod q$ as we claimed.

   \begin{itemize}
        \item [a)] Case $G/G''=\mathsf{F}_{20}$: It is easy to see that $k(G)\leq 12$. If moreover, $p\not \equiv 1 \pmod 3$, then $k(G)\leq 9$, which is impossible. Thus, as in case $G/G''=\mathsf{A}_{4}$  we  may assume that $p\equiv 1 \pmod 3$  and that $G''$ possesses a unique non-trivial $G$-conjugacy class. Therefore, $|G''|\leq20+1=21$, $|G''|\equiv 1 \pmod 5$  and it is a power or a prime, $p$, $p\equiv 1 \pmod 3$. We see that there is no integer with the required properties, and hence we have a contradiction.

\item [b)] Case $G/G''=\mathsf{F}_{52}$:  It is easy to see that  $k(G)\leq 15$.   As in  case $G/G''=\mathsf{D}_{18}$, we may  assume that  $G''$ contains at most $4$ non-trivial  $G$-conjugacy classes. Therefore, $|G''|\leq 4\cdot 52+1=209$. It follows that $|G''|\equiv 1 \pmod {13}$, $|G''|\leq 209$  and it  is a  power of  a prime. Thus, $|G''|\in \{27,53,79,131,157\}$ and hence $|G|\in \{1404,2756,4108,6812,8164\}$, which contradicts Lemma \ref{casos}.
    \end{itemize}
\end{itemize}

\end{itemize}
We conclude that $G''=1$ and the result follows.
\end{proof}
\end{thm}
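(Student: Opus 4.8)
The plan is to reduce everything to the previously established machinery. By the metabelian classification (Theorems \ref{caso2ab}, \ref{caso3ab}, \ref{caso4ab}), once $G$ is metabelian we are done, so the entire task is to show $G''=1$. I would argue by contradiction: passing to a suitable quotient, assume $G''$ is a minimal normal subgroup, hence $p$-elementary abelian, and that $G/G''$ is one of the eight metabelian groups $\{\mathsf{S}_3,\mathsf{D}_{10},\mathsf{A}_4,\mathsf{D}_{14},\mathsf{D}_{18},\mathsf{F}_{20},\mathsf{F}_{21},\mathsf{F}_{52}\}$ (the abelian quotients being excluded since then $G$ would be metabelian). A small additional reduction step lets me assume $G''=S(G)$ is the unique minimal normal subgroup: any second minimal normal subgroup would either force $G'$ abelian or, in the exceptional $\mathsf{D}_{18}$ case, be handled by quotienting out a normal subgroup of order $3$ and landing back in the $\mathsf{S}_3$ situation. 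With $G''=S(G)$ we automatically get $k(G/S(G))\leq 7$.

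Next I would invoke the counting dichotomy. Since $\mathsf{S}_4$ has $f(\mathsf{S}_4)=5>3$ and $G$ is non-metabelian, Theorem \ref{Vera-Lopez3} gives $\alpha(G)\geq 4$. If $k(G)\leq 11$, Theorem \ref{Vera-Lopez} forces $G''=1$, a contradiction; and if $k(G)\geq 12$ with $4\leq\alpha(G)\leq 9$, Theorem \ref{Vera-Lopez2} gives $f(G)>3$, again a contradiction. So I may assume $k(G)\geq 12$ and $\alpha(G)\geq 10$. The constraint $\alpha(G)\geq 10$ is the key lever: combined with an upper bound on $k(G)$ it forces $G\setminus S(G)$ to have very few conjugacy classes, hence $S(G)=G''$ itself contains only a handful of non-trivial $G$-classes, hence $|G''|$ is tightly bounded.

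The remaining work is case-by-case on $G/G''$, and the heart of it is getting sharp upper bounds on $k(G)$ via fields of values. For $G/G''=\mathsf{A}_4$ I use Lemma \ref{casoA4}: if $p\not\equiv 1\pmod 3$ then $k(G)\leq 9<12$, contradiction; otherwise $k(G)=12$, so $G''$ has a single non-trivial $G$-class, forcing $|G''|\in\{7,13\}$, i.e.\ $|G|\in\{84,156\}$, excluded by Lemma \ref{casos}. When $G/G''\neq\mathsf{A}_4$, the group $G'/G''$ is cyclic; I check $(|G':G''|,p)=1$ (else $G'$ would be a cyclic $p$-group, hence abelian). For $G/G''=\mathsf{D}_{18}$ with $p\neq 3$, Lemma \ref{casoD18} bounds $k(G)$ by $10$, $12$, or $15$ according to $p=2$, $p\equiv-1\pmod 3$, or $p\equiv 1\pmod 3$; the first two cases are immediately contradictory or force $|G''|$ of an impossible shape, the last gives $|G''|\leq 73$ and a short list of orders all excluded by Lemma \ref{casos}. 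For the families $\{\mathsf{S}_3,\mathsf{D}_{10},\mathsf{D}_{14},\mathsf{F}_{21},\mathsf{F}_{20},\mathsf{F}_{52}\}$ I apply Lemma \ref{restocasos} to get $\Q(\chi)\subseteq\Q_{rp}$ for $\chi\in\Irr(G|G'')$ where $r=|G:G'|$, then count quadratic/cubic subextensions of $\Q_{rp}$ using Lemma on cyclotomic subfields: for $r=2$ one gets $k(G)\leq 11<12$ outright; for $\mathsf{F}_{21}$ ($r=3$) and $\mathsf{F}_{20},\mathsf{F}_{52}$ ($r=4$) one gets $k(G)\leq 14$ or $15$, and then after forcing $p\equiv 1\pmod 3$ and using $\alpha(G)\geq 10$ the order $|G''|$ is pinned to a short list — for the $\mathsf{F}_{20},\mathsf{F}_{52}$ cases an extra coprime-action argument (Theorem 4.34 of \cite{Isaacs}, since $C_{G''}(V)$ would otherwise make $G'$ abelian) forces $|G''|\equiv 1\pmod q$ — and every resulting $|G|$ is on the list of Lemma \ref{casos}. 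In all cases we reach a contradiction, so $G''=1$.

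The main obstacle is bookkeeping discipline rather than a single hard idea: one must, in each case, (a) correctly enumerate the quadratic and cubic subfields of the relevant cyclotomic field $\Q_{rp}$ or $\Q_{3p}$, $\Q_{4p}$, taking care that some of these subfields (notably $\Q_3$) are already "used up" by rational characters of $G/G''$, so they reduce the count of available non-rational characters of $\Irr(G|G'')$; (b) translate the resulting bound on $k(G)$, together with $\alpha(G)\geq 10$, into a bound on the number of non-trivial $G$-classes inside $G''$ and hence on $|G''|$; and (c) verify that every surviving candidate order appears in Lemma \ref{casos} (this is exactly why that lemma's list contains the specific values $30,42,48,50,54,70,84,98,100,126,\dots$). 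The one genuinely non-formal ingredient is the coprime-action argument in the $\mathsf{F}_{20}/\mathsf{F}_{52}$ subcase, needed to produce the congruence $|G''|\equiv 1\pmod q$ that trims the candidate list down to the orders covered by Lemma \ref{casos}.
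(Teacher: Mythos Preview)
Your proposal is correct and follows essentially the same approach as the paper's own proof: the same reduction to $G''=S(G)$, the same dichotomy via Theorems \ref{Vera-Lopez}, \ref{Vera-Lopez3}, \ref{Vera-Lopez2} to force $k(G)\geq 12$ and $\alpha(G)\geq 10$, and the same case-by-case analysis using Lemmas \ref{casoA4}, \ref{casoD18}, \ref{restocasos} together with the coprime-action argument for the $\mathsf{F}_{20}/\mathsf{F}_{52}$ cases and the finite check of Lemma \ref{casos}. One small slip: in the $\mathsf{F}_{21}$ case the field $\Q_3$ is ``used up'' by \emph{non}-rational characters of $G/G''$ (the two linear characters of order $3$), not by rational ones; and in the $\mathsf{F}_{52}$ subcase the paper does not need $p\equiv 1\pmod 3$, only the congruence $|G''|\equiv 1\pmod{13}$ --- but these are cosmetic and do not affect the argument.
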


Now, Theorem A follows from Theorems \ref{nonsolvable} and \ref{solvable}.

\section{Further questions}

We close this paper with several possible lines of future research that, motivated by this work, have been suggested by A. Moret\'o. Theorem A shows that, as one could expect, $f(G)$ is usually much smaller than $k(G)$. It is perhaps surprising, therefore, that there could perhaps exist bounds for  $f(G)$ that are asymptotically of almost the same order of magnitude as known bounds for $k(G)$. By Brauer's  \cite{Brauer} bound, we know that the number of conjugacy classes of a finite group $G$ is $k(G)\geq\log_2\log_2|G|$. Theorem A shows that this bound does not hold if we replace $k(G)$ by $f(G)$ when $f(G)=2$ or $3$, but barely. It could be true that  $f(G)$ is at least the integer part of $\log_2\log_2|G|$.

Brauer's Problem 3 \cite{Brauer} asks for substantially better bounds for $k(G)$ in terms of $|G|$. Such a bound was obtained by Pyber \cite{Pyber}, whose bound was later improved in \cite{Keller} and \cite{BMT}. Currently the best known bound is the one given in \cite{BMT}, which  asserts that for every $\epsilon>0$, there exists $\delta>0$ such that  $k(G)>\frac{\delta \log_2|G|}{(\log_2\log_2|G|)^{3+\epsilon}}$ for all finite  groups.  Nowadays, the main open problem in this field is whether there is a logarithmic bound. More precisely, Bertram \cite{Bertram}  asked whether  $k(G)>\log_3|G|$. Theorem A shows that this bound does not hold if we replace $k(G)$ by $f(G)$, but it is not clear whether a logarithmic lower bound for $f(G)$ in terms of $|G|$ could exist. 

Another interesting problem on the number of conjugacy classes of a finite group was proposed by Bertram in \cite{Bertram}. He asked whether $k(G)\geq\omega(|G|)$, where if $n=p_1^{a_1}\dots p_t^{a_t}$ is the decomposition of the positive integer $n$ as a product of powers of pairwise different primes, $\omega(n)=a_1+\cdots+ a_n$. Theorem A also shows that this definitely does not hold if we replace $k(G)$ by $f(G)$. However, it could be true that $f(G)$ is at least the chief length of $G$ (i.e., the number of chief factors in a chief series).

\renewcommand{\abstractname}{Acknowledgements}
\begin{abstract}
This work will be part of the author’s PhD thesis, under the supervision of Alexander Moret\'o. He would like to thank him. 
\end{abstract}


\begin{thebibliography}{99}




\bibitem{BMT} B. Baumeister, A. Maróti,  H. Tong-Viet, \rm{Finite groups have more conjugacy classes}. \textit{Forum Math.} \textbf{29} (2017), 259-275.

\bibitem{Bertram} E. A. Bertram, \rm{Lower bounds for the number of conjugacy classes in finite groups. Ischia Group Theory 2004} \textit{Contemp.  Math.} \textbf{402} (2006), 95-117.

\bibitem{Brauer} R. Brauer, \textit{Representations of Finite Groups, volume I of Lectures on Modern Mathematics}.  Wiley, New York, 1963.

\bibitem{Dornhoff} L. Dornhoff, \textit{Group Representation Theory. Part A: Ordinary representation theory}. Marcel Dekker, Inc., New York, 1971. Pure and Applied Mathematics, 7.

\bibitem{gap} The GAP Group, \rm{GAP Groups, Algorithms, and programming, version 4.8.6, 2016}, \textbf{http://www.gap-system.org}.

\bibitem{Geck} M. Geck, \textit{An Introduction to Algebraic Geometry and Algebraic Groups, volume 10 of Oxford Graduate Texts in Mathematics}. Oxford University Press, Oxford, 2003.

\bibitem{Keller} T. M. Keller, \rm{Finite groups have even more conjugacy classes}. \textit{Israel J. Math.} \textbf{181} (2011), 433-444.

\bibitem{Isaacscar} I. M. Isaacs, \textit{Character Theory of Finite Groups}. Dover Publications, New York, 1976.

\bibitem{Isaacs} I. M. Isaacs, \textit{Finite Group Theory}.  Amer. Math. Soc. Providence, Rhode Island, 2008.

\bibitem{Alex} A. Moretó, \rm{Multiplicities of fields of values of irreducible characters of finite groups}. \textit{Proc. Amer. Math. Soc.} \textbf{149} (2021), 4109-4116.

\bibitem{Navarro-Tiep} G. Navarro, P. H. Tiep,  \rm{Rational irreducible characters and rational conjugacy classes in finite groups}. \textit{Trans. Amer. Math. Soc.} \textbf{360} (2008),  2443–2465.

\bibitem{Auto} H. Nguyen, A. Schaeffer Fry, H. Tong-Viet, C. Vinroot, \rm{On the number of irreducible real-valued characters of a finite group}. \textit{J. Algebra} \textbf{555} (2020),  275–288.

\bibitem{Pyber} L. Pyber, \rm{Finite groups have many conjugacy classes}. \textit{J. Lond. Math. Soc. (2)} \textbf{46} (1992),  239–249.

\bibitem{Rossi} D. Rossi, \textit{Fields of Values in Finite Groups: Characters and Conjugacy Classes}. PhD Thesis, University of Arizona,  Tucson, 2018.

\bibitem{VeraLopez} A. Vera López, J. Vera López, \rm{Classification of finite groups according to the number of conjugacy classes II}. \textit{Israel J. Math.} \textbf{56} (1986), 188-221.







\end{thebibliography}
\end{document}